\documentclass[11pt]{amsart}
\usepackage[a4paper,margin=2.35cm]{geometry}
\usepackage{amsmath,amssymb,amsthm,mathtools}
\usepackage{enumitem}
\usepackage{microtype}
\usepackage{xcolor}
\usepackage[colorlinks=true,linkcolor=blue!55!black,citecolor=blue!55!black,urlcolor=blue!55!black]{hyperref}

\newcommand{\dd}{\,\mathrm d}
\newcommand{\spt}{\operatorname{spt}}

\theoremstyle{plain}
\newtheorem{theorem}{Theorem}[section]
\newtheorem{proposition}[theorem]{Proposition}
\newtheorem{lemma}[theorem]{Lemma}
\theoremstyle{remark}

\numberwithin{equation}{section}

\title{Global \(W^{2,p}\) Regularity in Optimal Transport}

\author{Shibing Chen}
\address{School of Mathematical Sciences,
University of Science and Technology of China,
Hefei, Anhui 230026, China}
\email{chenshib@ustc.edu.cn}

\author{Yuanyuan Li}
\address{Institute for Theoretical Sciences,
Westlake University, Hangzhou, 310030, China}
\email{lyyuan@westlake.edu.cn}

\author{Xianduo Wang}
\address{School of Mathematics and Statistics,
Huazhong University of Science and Technology,
Wuhan, 430074, China}
\email{wangxd@hust.edu.cn}
\date{}

\begin{document}

\begin{abstract}
In this paper we establish global \(W^{2,p}\) estimates for the convex potentials of quadratic optimal transport between bounded convex domains with continuous positive densities. All the assumptions are optimal. The main new ideas include a blow-up analysis that allows for lower-dimensional collapse of the limiting source measure and reduces the limiting problem to a transport problem on its affine hull, and a good--bad scale decomposition in which rigidity controls the good scales while a counting argument shows that the proportion of bad scales tends to zero.
\end{abstract}

\maketitle

\section{Introduction}

Let \(\Omega,\Omega^*\subset\mathbb R^n\) be bounded open convex sets, and assume that
\begin{equation}\label{eqdata}
 f\in C(\overline\Omega),\qquad
 g\in C(\overline{\Omega^*}),\qquad
 0<\lambda\le f,g\le\Lambda,\qquad
 \int_\Omega f=\int_{\Omega^*}g.
\end{equation}
Let \(u\) and \(v\) be the dual convex potentials for the quadratic optimal transport, so that
\begin{equation}\label{breniereq}
 (Du)_\#(f\,\dd x)=g\,\dd y,
 \qquad
 (Dv)_\#(g\,\dd y)=f\,\dd x.
\end{equation}
 As usual, the potentials are extended to \(\mathbb R^n\) by the minimal convex extension described in Section~\ref{section2}.

The existence and uniqueness of the optimal transport map were established by Brenier \cite{Brenier1991}; see also McCann \cite{McCann1995} for the more general theory of monotone maps. For the interior regularity theory of the Monge--Amp\`ere equation, we refer the reader to the fundamental works \cite{Caffarelli1990W2p, Caffarelli1991, Caffarelli1992Mappings, PhillipisFigalli2013,PhillipisFigalliSavin2013}.
For general background on optimal transport and the Monge--Amp\`ere equation, we refer to the monographs \cite{Villani2003,Villani2009,Figalli2017}. The fundamental regularity condition for optimal transport with general cost functions was introduced by Ma--Trudinger--Wang \cite{MaTrudingerWang2005}, and was subsequently shown by Loeper \cite{Loeper2009} to be necessary for the continuity of optimal transport maps for arbitrary smooth positive densities. For global Sobolev regularity of optimal transport potentials on a class of nonconvex planar domains, see
\cite{MooneyRakshit2024, ChenLiLiu2023}.

Under stronger smoothness assumptions, Delano\"e \cite{Dela} first established global classical solvability in dimension two for uniformly convex domains. Caffarelli \cite{Caffarelli1992Boundary} subsequently established global \(C^{1,\alpha}\) estimates for bounded convex domains when the densities are 
bounded above and below. Later, in \cite{Caffarelli1996Boundary}, he proved global \(C^{1,1-\varepsilon}\) estimates when the domains are uniformly convex and \(C^2\) and the densities are positive and continuous. In the same paper, 
Caffarelli also established global \(C^{2,\alpha'}\) estimates, assuming further that the densities are \(C^\alpha\), for some \(\alpha'<\alpha\). Urbas \cite{Urbas}  extended the classical solvability result of Delano\"e to higher dimensions. 

Chen--Liu--Wang \cite{ChenLiuWang2021} proved global \(C^{1,1-\varepsilon}\) and \(W^{2,p}\) estimates for continuous positive densities when the convex domains have \(C^{1,1}\) boundaries, and established \(C^{2,\alpha}\) estimates when the domains are \(C^{1,1}\) and the densities are \(C^\alpha\). Savin--Yu obtained global \(C^{1,1-\varepsilon}\) and \(W^{2,p}\) estimates on arbitrary planar convex domains with constant densities \cite{SavinYu2020}. 
By introducing a very useful monotonicity formula, a related version of which was also used by Yuan \cite{Yuan2023} to give a new proof of the Pogorelov estimate for the Monge--Amp\`ere equation, Collins--Tong extended the results of Savin--Yu to all dimensions under a H\"older continuity assumption on the densities \cite{CollinsTong2025}. They also established global \(C^{2,\alpha}\) estimates under the sharp assumptions, namely, that the domains are convex and \(C^{1,\alpha}\) and that the densities are \(C^\alpha\). However, \(W^{2,p}\) estimates under the sharp assumptions have remained open. In this paper, we give a complete answer.

\begin{theorem}\label{mainthm}
Suppose \eqref{eqdata} and \eqref{breniereq} hold. Then, for every \(p>1\),
\[
 u\in W^{2,p}(\overline \Omega),
 \qquad
 v\in W^{2,p}(\overline{\Omega^*}).
\]
\end{theorem}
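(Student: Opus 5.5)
By the symmetry between \(u\) and \(v\) in \eqref{breniereq}, it suffices to prove \(u\in W^{2,p}(\Omega)\) for every \(p>1\). Interior \(W^{2,p}\) follows from Caffarelli's interior theory \cite{Caffarelli1990W2p}, since \(f/g(Du)\) is continuous. The work is therefore at the boundary.

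Caffarelli's global \(C^{1,\alpha}\) estimate \cite{Caffarelli1992Boundary} shows that \(u\) is strictly convex up to the boundary, and gives engulfing and doubling properties for the sections \(S_h(x_0)=\{u<\ell_{x_0}+h\}\), \(x_0\in\overline\Omega\), of the extended potential. The plan is to prove a power decay for the distribution of \(D^2u\) near \(\partial\Omega\):
\[
|\{x\in\Omega: |D^2u(x)|>t\}|\le C_p t^{-p}\quad\text{for every } p.
\]
Via a Calder\'on--Zygmund covering by boundary sections, this reduces to a density estimate. At every boundary point and every small height \(h\), the normalized solution in the section \(S_h(x_0)\) should have controlled second derivatives, in the sense of being close to a quadratic polynomial, on all but an \(\varepsilon\)-fraction of the section. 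Here \(\varepsilon\to0\) as \(h\to0\), using uniform continuity of \(f,g\).

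The key tool is a blow-up and rigidity step. Normalize \(S_h(x_0)\) by an affine map \(A_h\) to unit size and take limits along \(h\to0\). The rescaled densities converge to constants, so the limit is an optimal transport between constant densities on convex limit sets. These sets are cones, or cylinders over cones. The danger is that, since \(\partial\Omega\) is only convex and not \(C^{1,1}\), the rescaled source \(A_h(\Omega\cap S_h)\) may collapse. I would therefore allow the limiting source measure to concentrate on a lower-dimensional affine subspace. The limit problem is then reinterpreted as a transport problem on its affine hull, where a Liouville-type rigidity holds: the potential is quadratic, or at least homogeneous with \(D^2\) bounded. This rigidity would come from the Collins--Tong monotonicity formula \cite{CollinsTong2025} or the Savin--Yu classification \cite{SavinYu2020}. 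Compactness then shows that at \emph{good} scales the normalized solution is \(\delta\)-close to a rigid profile. At such scales the eccentricity of the sections changes by at most a factor \(1+\delta\) from one dyadic scale to the next, and one gets the local \(W^{2,p}\)-type density estimate.

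The main obstacle is the \emph{bad} scales, where no rigid limit is approached. I would handle them by a good--bad scale decomposition. Along the dyadic sequence \(h_k=2^{-k}\) at a fixed boundary point, a scale is bad only if a monotone geometric quantity makes a definite jump there. Candidate quantities are the eccentricity of the section, or the Collins--Tong monotone quantity. Since this quantity is bounded, the number of bad scales among the first \(K\) is \(o(K)\). Hence the total distortion \(\prod_k(1+\delta_k)\) grows at most subexponentially, like \(2^{o(k)}\). Translated through the covering argument, this means \(|D^2u|\) exceeds \(t\) on a set of measure \(t^{-p}\) for any \(p\), which gives every \(p\) but not \(p=\infty\). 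Making the count uniform in the base point, and compatible with the lower-dimensional collapse in the blow-up, is where I expect most of the difficulty.
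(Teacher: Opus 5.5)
Your outline (John-normalized blow-ups allowing source collapse, rigidity at good scales, $o(N)$ bad scales, covering) is the paper's. However, the step that produces the $o(N)$ count is missing, and the reason you give for it is wrong. A bounded quantity limits the number of definite jumps only if it is monotone. Eccentricity of sections is not monotone. If the Collins--Tong quantity were monotone, you would get $O(1)$ bad scales, not $o(K)$.

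With merely continuous densities, $F_{x_0}(r)=r^{-n}\int_{D_r(u,x_0)}f$ is only \emph{almost} monotone. This must be proved on arbitrary convex domains. The boundary terms have a sign because $x_0\in\overline\Omega$, $y_0\in\overline{\Omega^*}$ and the domains are convex. The density errors are controlled by $\omega_f,\omega_g$ on the star-shaped sets $D_r$. One then concludes by approximation:
$F_{x_0}(r_1)\le F_{x_0}(r_2)\exp\bigl(C\int_{r_2}^{r_1}\vartheta(s)s^{-1}\dd s\bigr)$, where $\vartheta(s)=\omega_f(Cs^\sigma)+\omega_g(Cs^\sigma)$ need not be Dini.

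You also need the uniform bounds $F_{x_0}\le C$ (from $|D_r(u,x_0)|\,|D_r(v,y_0)|\le Cr^{2n}$) and $F_{x_0}(r_0)\ge c_0$. With these, the defect $\mathfrak a(a,b)=\log\frac{F(a)}{F(b)}+C\int_a^b\vartheta(s)s^{-1}\dd s$ is nonnegative and additive. Its sum over the first $N$ dyadic scales is at most $C+C\int_{2^{-N}r_0}^{r_0}\vartheta(s)s^{-1}\dd s=o(N)$, uniformly in $x_0$, precisely because $\vartheta\to0$. This is the source of $o(N)$, and it also settles the uniformity you flagged.

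A good scale must be defined by a small defect over a long window $[r_{k+Q},r_{k-Q}]$, not over a single dyadic step. Compactness must give $r^{-n}\mu_j(D_r)\to1$ locally uniformly on all of $(0,\infty)$ before rigidity applies. The count still works, since each dyadic defect lies in at most $2Q$ windows.

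The collapsed limit is also misdescribed, and this affects rigidity. The original densities do become constant. But when the source collapses onto an $m$-dimensional subspace $L$, the limiting source is $\rho\,\mathcal H^m\lfloor L$, where $\rho$ is a limit of normalized $q$-volumes of transverse slices ($q=n-m$). The full-dimensional constant-density target must be projected by $P_L$, which is legitimate because $\tilde v_\infty$ is constant along $L^\perp$-fibres; this gives a density $\rho^*_L$. Neither density is constant. Brunn--Minkowski only gives that $\rho^{1/q}$ and $(\rho^*_L)^{1/q}$ are concave. So neither constant-density Collins--Tong rigidity nor the planar Savin--Yu classification applies.

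You need a weighted rigidity theorem on $\mathbb R^m$. In it, the mass identity inherited from the original problem scales like $r^{m+q}=r^n$, and the equality case uses $x\cdot D\rho\le q\rho$. Its conclusion is that the domains are cones, the densities are $q$-homogeneous, and the potentials are two-homogeneous. The potentials are not quadratic in general: a quadratic potential would force $C^*$ to be a linear image of $C$. This must then be lifted back to $\mathbb R^n$: the $L^\perp$-fibres of $\Omega^*_\infty$ scale, so $\Omega^*_\infty$ is a cone and $\tilde u_\infty$ is two-homogeneous. That yields $S^c_{1/4}[\tilde u_\infty](0)=\frac12S^c_1[\tilde u_\infty](0)$, which controls both the size and the shape of the next section.

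Two smaller points. First, the normalization must use Caffarelli's centered sections $S^c_h$. At $x_0\in\partial\Omega$ one has $Du(x_0)\in\partial\Omega^*$, and the tangent-plane sections of the extension then contain a whole ray in an outer normal direction of $\Omega^*$, so they cannot be normalized. Second, the density estimate ``at every height'' that you first aim for cannot hold at bad scales and is not needed. What the argument yields, and what Savin's covering together with Caffarelli's interior $W^{2,p}$ estimate uses, is $B_{ch^{1/2+\varepsilon}}(x_0)\subset S^c_h[u](x_0)\subset B_{Ch^{1/2-\varepsilon}}(x_0)$, uniformly in $x_0\in\overline\Omega$, for both $u$ and $v$.
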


There are three main difficulties. First, near the boundary, sections may be cut by the domains in an irregular way, since the domains are not assumed to be smooth or strictly convex. Second, continuity of the densities only implies that their oscillations tend to zero, which does not imply that the errors over all scales are summable. Third, after affine normalization, the limiting source measure may be supported on a lower dimensional subspace.

The proof does not require summability of the density errors: their average over dyadic scales tends to zero, so only $o(N)$ scales are bad. The blow-up analysis also allows the limiting source measure to collapse onto a proper subspace.

The proof consists of the following five steps: 
\begin{itemize}[leftmargin=2.2em]
\item[(i)] \emph{An almost-monotonicity estimate.}
Fix \(x_0\in\overline\Omega\), set \(y_0=Du(x_0)\), and define
\[
 D_r(u,x_0)
 :=\bigl\{x\in\Omega:
 (x-x_0)\cdot(Du(x)-y_0)<r^2\bigr\},
 \qquad
 F_{x_0}(r)
 :=r^{-n}\int_{D_r(u,x_0)}f(x)\,\dd x .
\]
Adapting the monotonicity formula and first-variation argument of Collins--Tong
\cite[Theorem~3.1 and Proposition~3.1]{CollinsTong2025},
we prove that
\[
 F_{x_0}(r_1)
 \le F_{x_0}(r_2)
 \exp\!\left(
 C\int_{r_2}^{r_1}
 \frac{\omega_f(Cs^\sigma)+\omega_g(Cs^\sigma)}{s}\,\dd s
 \right)
\]
whenever \(0<r_2<r_1\le r_0\). The error on the right is additive on adjacent scale intervals. Although it need not be summable down to zero, its average over dyadic logarithmic scales tends to zero. Note that no Dini condition is required.

\item[(ii)] \emph{Blow-up analysis.}
We perform the normalization in the same spirit as in \cite{ChenLiuWang2024}. More precisely, the normalization is applied to sections of the source potential, in contrast to \cite{ChenLiuWang2024}, where it is applied only on the target side. We normalize centered sections using their John ellipsoids. The geometric decay of the centered sections controls the normalized potentials and yields a locally uniform limit that is finite on all of \(\mathbb R^n\). At the same time, we consider the limits of the corresponding dual potentials. This allows us to handle the possible degeneration in which the limiting source measure is supported on a proper linear subspace.

\item[(iii)] \emph{The limit and dimensional reduction.}
In the full-dimensional case, we follow \cite[Theorem~4.1]{CollinsTong2025}. If the limiting source measure is supported on an \(m\)-dimensional subspace \(L\), with \(q=n-m\), projection onto \(L\) identifies the limiting density with the normalized \(q\)-dimensional volumes of transverse sections. By convexity and the Brunn--Minkowski inequality, its \(q\)-th root is concave, and the projected measures have uniformly bounded densities. The same argument applies to the projected target measure. The mass identity then yields a two-homogeneous \(m\)-dimensional limiting transport problem, which can be lifted back to the original variables.

\item[(iv)] \emph{Rigidity and iteration.}
The rigidity of the blow-up limits implies that, on a sufficiently long interval of scales with small total error, the next sub-level set differs little from one half of the preceding one. The sum of the errors over the first \(N\) dyadic scales is \(o(N)\), so the estimate fails at only \(o(N)\) indices. Consequently, for every \(\delta>0\),
\[
 B_{c_\delta h^{1/2+\delta}}(x)
 \subset S_h^c[u](x)
 \subset B_{C_\delta h^{1/2-\delta}}(x).
\]

\item[(v)] \emph{Conclusion.}
Finally, the \(W^{2,p}\) estimate follows from combining Caffarelli's interior estimate with a  covering argument of Savin \cite{SavinGlobalW2p}.
\end{itemize}

We would also like to point out that the methods developed in this paper can be used to slightly improve the results in \cite{ChenLiu2025}. More precisely, one can relax the regularity assumptions on the target domains by removing the uniform convexity condition and requiring only \(C^{1,\alpha}\) regularity (resp. mere convexity) to obtain \(C^{2,\alpha}\) (resp. \(C^{1,1-\epsilon}\)) regularity of the singular set.

The paper is organized as follows. Section~\ref{section2} contains the
notation and the standard estimates for sub-level sets. In Section~\ref{secendpoint} we prove the mass estimate. Sections \ref{secblowup} and \ref{sec5limit} contain the blow-up analysis and dimension reduction arguments. The limit profile is identified in Section~\ref{sec6rigidity}, and the iteration and the proof of the main theorem are carried out in Section~\ref{sec7shape}. Appendix~\ref{appendix} contains the local regularity result used for locally finite measures.

\section{Preliminaries}
\label{section2}

Throughout the paper, \(c,C>0\) denote universal constants depending only on the fixed data. Note that their values may change from line to line.

Let \(u\) and \(v\) be the potentials satisfying \eqref{breniereq}. For a set \(D\) and a convex function \(\phi\), define
\[
 I_D(z)=
 \begin{cases}
  0,&z\in D,\\
  +\infty,&z\notin D,
 \end{cases}
 \qquad
 \phi^*(z)=\sup_{\xi\in\mathbb{R}^n}\{z\cdot\xi-\phi(\xi)\}.
\]
Then we extend \(u,v\) to convex functions on \(\mathbb R^n\) by
\begin{equation}\label{uvextensions}
 \overline u=(v+I_{\overline{\Omega^*}})^*,
 \qquad
 \overline v=(u+I_{\overline\Omega})^*.
\end{equation}
We call \(\overline u,\overline v\) the minimal convex extensions of \(u,v\). They are finite on \(\mathbb R^n\), agree with \(u,v\) on their respective closed domains, and satisfy
\begin{equation}\label{uvdual}
 \overline u^*=v+I_{\overline{\Omega^*}},
 \qquad
 \overline v^*=u+I_{\overline\Omega}.
\end{equation}
Note that \((x,y)\in\overline\Omega\times\overline{\Omega^*}\) satisfies
\[
 u(x)+v(y)=x\cdot y
\]
if and only if
\(y\in\partial\overline u(x)\) and
\(x\in\partial\overline v(y)\).

For \(x_0\in\overline\Omega\), let
\[
\ell_{x_0}(x)= u(x_0)+D u(x_0)\cdot(x-x_0).
\]
For \(h>0\), we define
\[
S_h[u](x_0)=\{x\in\Omega:u(x)<\ell_{x_0}(x)+h\}.
\]
The centered sub-level set of height \(h\) at \(x_0\) is
\begin{equation}\label{centeredsec}
	S_h^c[u](x_0)
	=\{x\in\mathbb R^n:\overline u(x)<\widehat\ell(x)+h\},
\end{equation}
where \(\widehat\ell\) is affine, \(\widehat\ell(x_0)=u(x_0)\), and is chosen so that \(x_0\) is the center of mass of \(S_h^c[u](x_0)\). The existence of \(\widehat\ell\) follows from \cite[Section~2]{Caffarelli1996Boundary}. We also set
\[
D_r(u,x_0)=\bigl\{x\in\Omega:
(x-x_0)\cdot\bigl(Du(x)-Du(x_0)\bigr)<r^2\bigr\},
\qquad r>0.
\]
Note that $S_h[u](x_0)$ and $D_r(u,x_0)$ are contained in $\Omega,$ but $S_h^c[u](x_0)$ may contain both points in and out of $\Omega$. When no confusion arises, we may abbreviate them as $S_h[u], S_h^c[u]$ or $S_h(x_0), S_h^c(x_0)$. The corresponding sets for \(v\) are defined by interchanging \((\Omega,u)\) and \((\Omega^*,v)\).

We first recall the estimates used throughout the paper. 
\begin{proposition}
\label{classicalproperties}
There are \(\alpha_0\in(0,1)\) and constants depending only on \(n,\lambda,\Lambda\) and the fixed inner and outer radii of the two domains such that the following hold.
\begin{enumerate}[label=\textup{(\roman*)}]
\item By \cite[Theorem~1.1]{JhaveriSavin2022} and \cite[Proposition 2.5]{CollinsTong2025}, we have
\[
 [D u]_{C^{0,\alpha_0}(\overline\Omega)}
 +[D v]_{C^{0,\alpha_0}(\overline{\Omega^*})}\le C.
\]
For \(x_0\in\overline\Omega\), \(y_0\in\overline{\Omega^*}\), and every fixed \(R<\infty\),
\begin{equation}\label{globalupper}
    [D\overline u]_{C^{0,\alpha_0}(B_R(x_0))}
     +[D\overline v]_{C^{0,\alpha_0}(B_R(y_0))}\le C_R.
\end{equation}

\item By \cite{Caffarelli1992Boundary}, we know the restrictions
\[
 Du:\Omega\longrightarrow\Omega^*,
 \qquad
 Dv:\Omega^*\longrightarrow\Omega
\]
are mutually inverse homeomorphisms.

\item By \cite[Proposition~2.6]{CollinsTong2025} and \cite[Proposition~2.3(3)]{CollinsTong2025}, for every \(x_0\in\overline\Omega\), \(r>0\), and \(h>0\),
\begin{align}
 x_0+\tfrac12\bigl(S_{r^2}[u](x_0)-x_0\bigr)
 &\subset D_r(u,x_0)
 \subset S_{r^2}[u](x_0),
 \label{include-D-S}\\
 S_{ch}^c[u](x_0)\cap\Omega
 &\subset S_h[u](x_0)
 \subset S_{Ch}^c[u](x_0)\cap\Omega.
 \label{include-S-Sc}
\end{align}
The analogous inclusions hold for \(v\).

\item By \cite[Proposition~2.3(1),(2)]{CollinsTong2025}, let \(p_h=D\widehat\ell\), where \(\widehat\ell\) defines \(S_h^c[u](x_0)\) in \eqref{centeredsec}, and let \(A_h\) be the positive definite linear map which normalizes the John ellipsoid of \(S_h^c[u](x_0)-x_0\) onto \(B_1\). Then
		\begin{align}
			B_c
			&\subset h^{-1}A_h^{-t}
			\bigl(Du(S_h^c[u](x_0))-p_h\bigr)
			\subset B_C,
			\label{gradpolar}\\
			c h^n
			&\le
			|S_h^c[u](x_0)|\,
			\bigl|Du(S_h^c[u](x_0))\bigr|
			\le C h^n.
			\label{volumeproduct}
		\end{align}
The same estimates hold with \(u\) replaced by \(v\).

\item For every \(0<h_-\le h_+<\infty\), there are \(0<c_{h_-,h_+}\le C_{h_-,h_+}<\infty\) such that, for every \(x_0\in\overline\Omega\), \(y_0=D\overline u(x_0)\), and
\(h\in[h_-,h_+]\),
\begin{align*}
 B_{c_{h_-,h_+}}(x_0)
 &\subset S_h^c[u](x_0)
 \subset B_{C_{h_-,h_+}}(x_0),\\
 B_{c_{h_-,h_+}}(y_0)
 &\subset S_h^c[v](y_0)
 \subset B_{C_{h_-,h_+}}(y_0).
\end{align*}
\end{enumerate}
\end{proposition}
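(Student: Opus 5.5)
Most items of Proposition~\ref{classicalproperties} are quoted results, so the plan is to check that the cited statements apply under \eqref{eqdata} and then derive the remaining ones, chiefly \eqref{globalupper} and item (v), from them.

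\textbf{Cited items.} For (i), the boundary H\"older gradient bound of Jhaveri--Savin and Collins--Tong only needs convexity of the domains and densities pinched between \(\lambda\) and \(\Lambda\). Item (ii) is Caffarelli's strict convexity and \(C^{1,\alpha}\) theory in \(\overline\Omega\), plus the inverse relation \(Dv=(Du)^{-1}\) that follows from \eqref{uvdual}. Items (iii) and (iv) are exactly the statements of \cite[Propositions~2.3, 2.6]{CollinsTong2025}. In each case I only need to check that the hypotheses there are the doubling and convexity properties implied by \eqref{eqdata}.

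\textbf{Proof of \eqref{globalupper}.} I would exploit the structure of the minimal extension. Since \(\overline u=(v+I_{\overline{\Omega^*}})^*\), we have
\[
\partial\overline u(x)=\operatorname{argmax}_{y\in\overline{\Omega^*}}\{x\cdot y-v(y)\}.
\]
Take \(x\in B_R(x_0)\setminus\overline\Omega\) and let \(y\) be a maximizer. Then \(x-Dv(y)\) lies in the normal cone of \(\overline{\Omega^*}\) at \(y\). The point \(x'=Dv(y)\in\overline\Omega\) and satisfies \(D\overline u(x')=y\).

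First I would show the maximizer is unique. This follows because \(v\) is strictly convex along boundary segments of \(\Omega^*\): its Legendre dual \(u\) is \(C^{1}\) up to the boundary by (i). Uniqueness gives \(\overline u\in C^1(\mathbb R^n)\).

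For the H\"older bound, I would compare two points \(x_1,x_2\). The monotonicity inequality reads
\[
(D\overline u(x_1)-D\overline u(x_2))\cdot(x_1-x_2)\ge (y_1-y_2)\cdot(Dv(y_1)-Dv(y_2)).
\]
Combined with the engulfing/strict convexity estimate \((y_1-y_2)\cdot(Dv(y_1)-Dv(y_2))\ge c|y_1-y_2|^{1+1/\alpha_0}\), which is the dual form of the \(C^{1,\alpha_0}\) bound on \(u\), this gives
\[
|y_1-y_2|^{1+1/\alpha_0}\le C|x_1-x_2|\,|y_1-y_2|.
\]
Hence \(|y_1-y_2|\le C|x_1-x_2|^{\alpha_0}\), after possibly renaming \(\alpha_0\). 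The constant depends on \(R\) only through the diameter bounds, which produces \(C_R\). The same argument with the roles of \(u\) and \(v\) exchanged handles \(D\overline v\).

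\textbf{Item (v).} This is a compactness statement, and I would prove it by contradiction. The outer inclusion \(S_h^c(x_0)\subset B_{C}(x_0)\) should come from the uniform strict convexity of \(\overline u\) at bounded scales: \(\overline u\) grows at least linearly at infinity because \(\Omega^*\) contains a ball, with constants uniform in \(x_0\). The inner inclusion \(B_c(x_0)\subset S_h^c(x_0)\) follows from \eqref{globalupper}: on \(B_\rho(x_0)\),
\[
\overline u-\widehat\ell\le C\rho^{1+\alpha_0}+|p_h-Du(x_0)|\rho,
\]
and \(|p_h-Du(x_0)|\) is bounded since \(p_h\in\overline{\Omega^*}\)-type sets are bounded by \eqref{gradpolar}. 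A cleaner route is to use the volume bound \eqref{volumeproduct}, \(|S_h^c|\ge ch^n/|\Omega^*|\), together with the fact that \(x_0\) is the center of mass, which by John's lemma gives a ball of comparable size.

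\textbf{Main obstacle.} I expect the main difficulty to be ruling out very thin sections at fixed height, i.e.\ uniform nondegeneracy as \(x_0\) varies over \(\overline\Omega\). I would first try contradiction plus compactness. Take a sequence \(x_k\) and heights \(h_k\) with degenerating sections, use Blaschke selection, and pass to the limit of the potentials. The limit is a Brenier solution for the same data, and a degenerate section would force \(Du\) to map a set of positive measure to a null set, contradicting the two-sided density bounds in \eqref{eqdata}. The same reasoning applies to \(v\).
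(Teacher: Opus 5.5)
The only part of this proposition the paper actually proves is (v); items (i)--(iv) are quoted. Your inner inclusion is essentially the paper's argument. A bounded section of \(\overline u-p_h\cdot x\) forces \(p_h\in\operatorname{dom}\overline u^*=\overline{\Omega^*}\), and \(D\overline u(\mathbb R^n)\subset\overline{\Omega^*}\), so \(\overline u-\widehat\ell<h_-\) on a ball of radius \(h_-/(2L_0)\). Your ``cleaner route'' is not valid: \(|S_h^c|\ge ch^n\) plus John's lemma bounds the product of the John semi-axes, not the smallest one.

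\textbf{The gap is the outer inclusion.} Your stated mechanism does not work. First, \(\overline u\) is not uniformly strictly convex at bounded scales. For \(y\in\partial\Omega^*\) and \(\nu\) in the normal cone of \(\overline{\Omega^*}\) at \(y\), \(\overline u\) is affine on the ray \(\{Dv(y)+t\nu:t\ge0\}\). Second, linear growth of \(\overline u\) does not control \(S_h^c[u](x_0)\). That set is a sub-level set of \(\overline u-\widehat\ell\), and the unknown slope \(p_h\in\overline{\Omega^*}\) may lie near \(\partial\Omega^*\). In a direction \(e\), \(\overline u-p_h\cdot x\) grows only at rate \(\sup_{y\in\overline{\Omega^*}}y\cdot e-p_h\cdot e\), which has no uniform positive lower bound. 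Nothing in your argument keeps \(p_h\) away from \(\partial\Omega^*\). Your ``main obstacle'' paragraph aims at thin sections, but at fixed heights thinness is the trivial direction; the real degeneration is long sections. Your compactness sketch also does not say which limiting object would have a null gradient image, or why.

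\textbf{The missing idea is the centering.} Since \(x_0\) is the barycenter, John's lemma (as in \eqref{oppradii}) applies. If \(R\) is the outer radius and \(e\) a direction realizing it, then \(x_0\pm te\in S_h^c[u](x_0)\) for \(0<t<R/n^{3/2}\). Adding the two defining inequalities cancels \(p_h\) and gives
\[
G_e(t):=\overline u(x_0+te)+\overline u(x_0-te)-2\overline u(x_0)<2h_+ .
\]
The paper then argues by compactness. If \(R\to\infty\) along a sequence, the normalized limit is affine on a whole line. That puts \(\partial\overline u(\mathbb R^n)\) in a hyperplane, contradicting \(\Omega^*\subset\partial\overline u(\mathbb R^n)\).

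Your linear-growth idea does work at exactly this point and even gives an explicit bound. If \(\overline{B_r(y_c)}\subset\overline{\Omega^*}\), test \(\overline u=(v+I_{\overline{\Omega^*}})^*\) at \(x_0\pm te\) with the points \(y_c\pm re\). This gives \(G_e(t)\ge 2rt-C\), hence \(R\le C(h_+)\). Without this symmetrization, however, the outer inclusion is not established.

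A secondary point concerns your derivation of \eqref{globalupper}, which the paper simply quotes. The inequality \((y_1-y_2)\cdot(Dv(y_1)-Dv(y_2))\ge c|y_1-y_2|^{1+1/\alpha_0}\) is not just the dual form of a H\"older bound on \(\overline\Omega\). The usual proof steps off the domain at boundary points, so this needs its own argument (e.g.\ using an inner ball of \(\Omega\) together with monotonicity) or a citation.
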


\begin{proof}

We only need to prove \textup{(v)} and it suffices to give the proof for \(u\). Suppose that the outer inclusion fails. Then there are \(x_k\in\overline\Omega\), 
\(h_k\in[h_-,h_+]\), and centered sections \(S_{h_k}^c[u](x_k)\) whose outer radii tend to infinity. After passing to a subsequence,
\[
 x_k\to x_\infty,\qquad D\overline u(x_k)\to D\overline u(x_\infty),\qquad h_k\to h_\infty,
\]
and the convex functions
\[
 z\longmapsto \overline u(x_k+z)-\overline u(x_k)-D\overline u(x_k)\cdot z
\]
converge locally uniformly to a convex function \(\phi_\infty\). Let \(e_k\) be a direction of maximal radius and assume that \(e_k\to e\). Both radial endpoints on \(x_k+\mathbb Re_k\) tend to infinity. Thus, for every fixed \(t>0\), the points \(x_k \pm te_k\) belong to $S_{h_k}^c$ for all large \(k\). Adding the two defining inequalities and passing to the limit, we obtain
\[
 0\le\phi_\infty(te)+\phi_\infty(-te)\le2h_+.
\]
The function $\phi_\infty(te)+\phi_\infty(-te)$ is convex in \(t\), vanishes at \(0\), and is bounded on \([0,\infty)\). Therefore, it is identically zero. Thus \(u\) is affine on the line \(x_\infty+\mathbb Re\), which contradicts the fact that its subgradient image has nonempty interior. This proves the uniform outer inclusion.

For the inner inclusion, let \(p_h=D\widehat\ell_h\). The existence of a bounded section of \(\overline u-p_h\cdot(\,\cdot-x_0)\) implies
\[
 p_h\in\operatorname{dom}\overline u^*=\overline{\Omega^*}.
\]
Set \(L_0=1+\sup_{y\in\overline{\Omega^*}}|y|\). Since \(D\overline u(\mathbb R^n)\subset\overline{\Omega^*}\), both \(\overline u\) and the affine function with slope \(p_h\) are \(L_0\)-Lipschitz. Therefore,
\[
 \overline u(x_0+z)-\overline u(x_0)-p_h\cdot z\le2L_0|z|<h_-\le h
\]
whenever \(|z|<h_-/(2L_0)\) and the argument for \(v\) is the same.
\end{proof}

We shall also use the following two elementary consequences.
\begin{lemma}\label{lemslope}
Let \(\phi:\mathbb R^n\to\mathbb R\) be finite and convex, with \(\phi(0)=0\) and \(0\in\partial\phi(0)\). If
\[
 B_r\subset S_h^c[\phi](0)=\{x\in\mathbb R^n:\phi(x)-p_h\cdot x<h\},
\]
then \(|p_h|\le h/r\).
\end{lemma}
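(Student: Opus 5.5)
The plan is to combine convexity of \(\phi\) along the ray \(\{tp_h\}\) with the supporting-hyperplane inequality at the origin. If \(p_h=0\) there is nothing to prove, so assume \(p_h\neq0\) and set \(e=p_h/|p_h|\).

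First, since \(0\in\partial\phi(0)\) and \(\phi(0)=0\), we have \(\phi\ge0\) on \(\mathbb R^n\). Next, take any \(t\in(0,r)\). The point \(x=te\) lies in \(B_r\subset S_h^c[\phi](0)\). Hence
\[
 0\le\phi(te)<h+p_h\cdot(te)=h+t|p_h|.
\]
This bound alone is not useful. Its sign is wrong, so the correct choice is the point on the opposite side, \(x=-te\). For this point,
\[
 0\le\phi(-te)<h+p_h\cdot(-te)=h-t|p_h|.
\]
Therefore \(t|p_h|<h\) for every \(t<r\), and letting \(t\uparrow r\) gives \(|p_h|\le h/r\).

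There is essentially no obstacle. The only step needing care is choosing the direction \(-p_h\) rather than \(p_h\). Nonnegativity of \(\phi\), which comes from \(0\in\partial\phi(0)\), then directly bounds the slope term. The strict inequality in the definition of the section, together with the openness of \(B_r\), is handled by the limit \(t\uparrow r\).
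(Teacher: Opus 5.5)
Your proposal is correct and matches the paper's proof: both use $\phi\ge 0$ (from $0\in\partial\phi(0)$) at the point on the ray in direction $-p_h/|p_h|$ inside $B_r$, which gives $t|p_h|<h$. Both then pass to the limit at the boundary of $B_r$; the paper writes the point as $-(r-\varepsilon)e$ with $\varepsilon\to0$, which is the same as your $t\uparrow r$.
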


\begin{proof}
Since \(0\in\partial\phi(0)\) and \(\phi(0)=0\), we have \(\phi\ge0\). If \(p_h\ne0\), set \(e=p_h/|p_h|\). For \(0<\varepsilon<r\), the point \(-(r-\varepsilon)e\) belongs to \(S_h^c[\phi](0)\), and hence
\[
 0\le\phi(-(r-\varepsilon)e)<h-(r-\varepsilon)|p_h|.
\]
Letting \(\varepsilon\rightarrow0\) proves the assertion. The case \(p_h=0\) is immediate.
\end{proof}

The next lemma is standard and it compares centered sections at two heights.

\begin{lemma}\label{fixratio}
Let \(\phi:\mathbb R^n\to\mathbb R\) be finite and convex, and suppose that $S_h^c[\phi](0)$ is bounded. Then, for every \(0<\theta\le1\),
\begin{equation}\label{fixedratio}
 \frac{\theta}{n^3}S_h^c[\phi](0)\subset S_{\theta h}^c[ \phi](0)\subset n^3S_h^c[\phi](0).
\end{equation}
\end{lemma}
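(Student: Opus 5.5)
The plan is a direct convexity argument. It uses only the definition \eqref{centeredsec} and the standard fact that a bounded convex set \(K\) with barycenter \(0\) satisfies \(-K\subset nK\). Write \(S=S_h^c[\phi](0)\) and \(S'=S_{\theta h}^c[\phi](0)\), with affine functions \(\widehat\ell_h,\widehat\ell_{\theta h}\) satisfying \(\widehat\ell(0)=\phi(0)\). Set \(w=\phi-\widehat\ell_h\), so that \(w(0)=0\) and \(S=\{w<h\}\). Then \(S'=\{w-q\cdot x<\theta h\}\) for some vector \(q\). I would treat \(n\ge2\), which is where the constant \(n^3\) absorbs the bounds below. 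For \(n=1\) the same scheme gives the statement with a slightly larger absolute constant.

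\emph{Step 1: a global lower bound from the barycenter condition.} Let \(K=\{\psi<H\}\) be a bounded sublevel set of a convex \(\psi\) with \(\psi(0)=0\) and barycenter \(0\). For \(z\in K\) we have \(-z/n\in K\). Writing \(0=\frac1{n+1}z+\frac n{n+1}(-z/n)\), convexity gives \(0\le w\)-type inequality \(0\le\psi(z)+n\psi(-z/n)<\psi(z)+nH\), so \(\psi>-nH\) on \(K\). Outside \(K\) we have \(\psi\ge H\). Hence \(\psi\ge -nH\) on all of \(\mathbb R^n\). Applying this to \(w\) with \(H=h\) gives \(w\ge-nh\). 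Applying it to \(w-q\cdot x\) with \(H=\theta h\) gives \(w-q\cdot x\ge -n\theta h\). I will first check that \(S'\) is bounded, so that this step applies to it. This follows because \(w\) grows at least linearly outside the bounded set \(S\), and the slope \(q\) is finite; alternatively, I would simply take boundedness as part of the existence of the centered section.

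\emph{Step 2: the outer inclusion \(S'\subset n^3S\).} Let \(x\in S'\) with \(x\notin kS\). Then \(w(x/k)\ge h\), and convexity together with \(w(0)=0\) gives \(w(x)\ge kh\). Since \(-x/n\in S'\), we also have \(w(-x/n)+q\cdot x/n<\theta h\). Adding this to \(w(x)-q\cdot x<\theta h\) cancels the \(q\)-terms:
\[
 w(x)+n\,w(-x/n)<(n+1)\theta h\le (n+1)h .
\]
Combining this with \(w(-x/n)\ge -nh\) from Step 1 yields \(kh<(n^2+n+1)h\). Since \(n^2+n+1\le n^3\) for \(n\ge2\), every point of \(S'\) lies in \(n^3S\).

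\emph{Step 3: the inner inclusion \(\frac{\theta}{n^3}S\subset S'\).} This is the step I expect to need the most care, because the unknown slope \(q\) no longer cancels. The plan is to control \(q\cdot x\) on \(S\) using Step 1 applied to \(S'\). For \(z\in S\) we have \(-z/n\in S\). The bound \(w-q\cdot x\ge-n\theta h\) at the point \(-z/n\) gives
\[
 -q\cdot z/n\le w(-z/n)+n\theta h<h+n\theta h ,
\]
that is, \(-q\cdot z<n(1+n\theta)h\). Now take \(x=tz\) with \(t=\theta/n^3\). Convexity gives \(w(x)\le t\,w(z)<th\), and therefore
\[
 w(x)-q\cdot x< th\bigl(1+n+n^2\theta\bigr)\le \frac{\theta h(1+n+n^2)}{n^3}\le\theta h
\]
for \(n\ge2\). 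This shows \(x\in S'\), and completes the proof of \eqref{fixedratio}.
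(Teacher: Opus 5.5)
Your argument is correct for $n\ge2$, but it does not follow the paper's route. The paper works one direction at a time. After normalizing $\phi(0)=0$, $\phi\ge0$, it writes $S_h^c\cap\mathbb R e$ as the segment from $-b_he$ to $a_he$ and uses John's lemma in the form $R_h/n^{3/2}\le a_h,b_h\le R_h$, with $R_h=\max\{a_h,b_h\}$. It removes the centering slope by adding the two boundary identities, which gives $G_e(R_h/n^{3/2})\le 2h\le G_e(R_h)$ for the even convex function $G_e(t)=\phi(te)+\phi(-te)$. Monotonicity of $G_e$ and of $G_e(t)/t$ then compares the radial functions at heights $h$ and $\theta h$, yielding $\frac{\theta}{n^3}a_h\le a_{\theta h}\le n^3a_h$.

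You instead argue globally. The centroid inclusion $-K\subset nK$, together with convexity through the origin, gives the lower bound $\psi\ge-nH$ for every centered section. This single bound disposes of the unknown slope $q$ in both inclusions: by cancellation when you add the inequalities at $x$ and $-x/n$, and through the estimate $-q\cdot z<n(1+n\theta)h$ on $S$. Your route avoids John ellipsoids and radial-function bookkeeping, gives the constant $n^2+n+1\le n^3$ in both inclusions, and yields the global bound $\phi-\widehat\ell_h\ge-nh$ as a by-product. The paper's direction-wise comparison, in turn, loses nothing in dimension one, where $\gamma_1=1$.

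That is where your proposal falls short of the statement as written. For $n=1$ the lemma asserts $\theta S\subset S'\subset S$, while your scheme only gives the factor $3$. The repair is immediate. In one dimension a section with barycenter $0$ is a symmetric interval, so let $a,a'$ be the half-lengths of $S,S'$ and set $G(t)=w(t)+w(-t)$. Then $G(a)=2h$ and $G(a')=2\theta h$, and monotonicity of $G(t)/t$ forces $\theta a\le a'\le a$. This is exactly the paper's argument with $\gamma_1=1$. The point is cosmetic for the applications, which only need some dimensional constant.

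Your first justification that $S'$ is bounded also does not work as written: linear growth of $w$ outside $S$ need not dominate an arbitrary slope $q$. Your fallback is the correct one. A centered section must have finite volume for its barycenter to be defined, and an open convex set of finite volume is bounded; the paper relies on this implicitly as well.
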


\begin{proof}
After subtracting an affine function, \(\phi(0)=0\) and \(\phi\ge0\). Fix \(e\in\mathbb S^{n-1}\), and write the two boundary points of \(S_h^c\cap\mathbb Re\) as \(a_he\) and \(-b_he\), where \(a_h,b_h>0\). Set \(R_h=\max\{a_h,b_h\}\). Since $S_h^c$ is centered at the origin, by John's lemma \cite[Lemma~A.3]{CaffarelliMcCann2010}, with \(\gamma_n=n^{3/2}\), we have
\begin{equation}\label{oppradii}
 \frac{R_h}{\gamma_n}\le a_h,b_h\le R_h.
\end{equation}
Since $\frac{\phi(a_he)}{a_h}+\frac{\phi(-b_he)}{b_h}=h\left(\frac1{a_h}+\frac1{b_h}\right),$ and \(t\mapsto\phi(\pm te)/t\) is nondecreasing on \((0,\infty)\), the even convex function \(G_e(t)=\phi(te)+\phi(-te)\) satisfies
\begin{equation}\label{raheibound}
 G_e(R_h)\ge2h,\qquad G_e(R_h/\gamma_n)\le2h.
\end{equation}
Let \(R_{\theta h}\) be defined in the same way at height \(\theta h\). If \(R_{\theta h}<R_h/\gamma_n\), the monotonicity of \(G_e(t)/t\) and \eqref{raheibound} imply
\[
 \frac{2\theta h}{R_{\theta h}}\le\frac{2\gamma_n h}{R_h}.
\]
Thus \(R_{\theta h}\ge\theta R_h/\gamma_n\). The same conclusion is immediate when \(R_{\theta h}\ge R_h/\gamma_n\). If \(0<\theta<1\), then
\[
 G_e(R_{\theta h}/\gamma_n)\le2\theta h<2h\le G_e(R_h),
\]
so \(R_{\theta h}\le \gamma_nR_h\). This inequality is automatic when \(\theta=1\). Combining these bounds with \eqref{oppradii} at the two heights, we obtain
\[
 \frac{\theta}{\gamma_n^2}a_h\le a_{\theta h}\le\gamma_n^2a_h
\]
in every direction. Since \(\gamma_n^2=n^3\), \eqref{fixedratio} follows.
\end{proof}

\section{An almost monotonicity estimate}
\label{secendpoint}

Fix \(x_0\in\overline\Omega\) and set \(y_0=D\overline u(x_0)\). Define
\[
 \Psi_{x_0}(x)
 :=(x-x_0)\cdot\bigl(Du(x)-y_0\bigr),
 \qquad
 \Psi_{y_0}^*(y)
 :=(y-y_0)\cdot\bigl(Dv(y)-x_0\bigr),
\]
and, for \(h>0\),
\[
 D_{\sqrt h}(u,x_0) =\{x\in\Omega:\Psi_{x_0}(x)<h\},
 \qquad
 D_{\sqrt h}(v,y_0)=\{y\in\Omega^*:\Psi_{y_0}^*(y)<h\}.
\]
When \(y=Du(x)\), the Legendre identities imply
\(\Psi_{y_0}^*(y)=\Psi_{x_0}(x)\). Hence
\begin{equation}\label{transportedmass}
 M_{x_0}(h)
 :=\int_{D_{\sqrt h}(u,x_0)}f(x)\,\dd x
 =\int_{D_{\sqrt h}(v,y_0)}g(y)\,\dd y.
\end{equation}

Define the two global moduli
\begin{align*}
 \omega_f(t)&=\sup\{|f(x)-f(x')|:
 x,x'\in\overline\Omega,\ |x-x'|\le t\},\\
 \omega_g(t)&=\sup\{|g(y)-g(y')|:
 y,y'\in\overline{\Omega^*},\ |y-y'|\le t\}.
\end{align*}

By \eqref{globalupper}, for every fixed \(R<\infty\) and $|z|,|\zeta|\le R$,
\begin{align*}
 0&\le
 \overline u(x_0+z)- \overline u(x_0)-y_0\cdot z
 \le C_R|z|^{1+\alpha_0}
 ,\\
 0&\le
 \overline v(y_0+\zeta)- \overline v(y_0)-x_0\cdot\zeta
 \le C_R|\zeta|^{1+\alpha_0}.
\end{align*}

It follows from \eqref{uvextensions} and the above estimates that
\begin{equation*}
 \begin{aligned}
  u(x)- u(x_0)-y_0\cdot(x-x_0)
 &\ge c|x-x_0|^{1+1/\alpha_0},\\
  v(y)- v(y_0)-x_0\cdot(y-y_0)
 &\ge c|y-y_0|^{1+1/\alpha_0},
 \end{aligned}
\end{equation*}
for \(x\in\overline\Omega\), \(y\in\overline{\Omega^*}\), uniformly in
\(x_0,y_0\).

If \(y=Du(x)\), a direct computation shows that
\begin{equation*}
 \Psi_{x_0}(x)=
  u(x)- u(x_0)-y_0\cdot(x-x_0)
 + v(y)- v(y_0)-x_0\cdot(y-y_0).
\end{equation*}
Both terms on the right are nonnegative. Consequently, there are \(C_0,r_0>0\), depending only on the fixed data, such that
\begin{equation}\label{diameterup}
 \operatorname{diam}D_r(u,x_0)
 +\operatorname{diam}D_r(v,y_0)
 \le C_0r^\sigma,
 \qquad
 \sigma=\frac{2\alpha_0}{1+\alpha_0},
 \qquad 0<r\le r_0.
\end{equation}
Define
\begin{equation*}
 \vartheta(r)=\omega_f(C_0r^\sigma)+\omega_g(C_0r^\sigma),
 \qquad
 F_{x_0}(r)=r^{-n}M_{x_0}(r^2).
\end{equation*}

For homogeneous densities, Collins--Tong established an exact monotonicity formula \cite[Theorem~3.1]{CollinsTong2025}. Their smooth computation \cite[Proposition~3.1]{CollinsTong2025} also identifies the error terms associated with nonhomogeneous H\"older densities. We give the argument because the densities are only continuous here and we estimate these terms directly by \(\omega_f\) and \(\omega_g\).

\begin{theorem}
\label{nearlymonotonicity}
There is a constant $C$, depending only on $n,\lambda,\Lambda$, such that
\begin{equation}\label{endpointinequality}
 F_{x_0}(r_1)
 \le F_{x_0}(r_2)
 \exp\!\left(C\int_{r_2}^{r_1}\frac{\vartheta(s)}s\,\dd s\right)
\end{equation}
whenever $0<r_2<r_1\le r_0$. 
\end{theorem}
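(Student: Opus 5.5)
The plan is to prove the differential inequality
\[
 h\,M_{x_0}'(h)\le \tfrac n2\bigl(1+C\vartheta(\sqrt h)\bigr)M_{x_0}(h)
 \qquad\text{for a.e. } h\in(0,r_0^2],
\]
in the distributional sense; note that $M_{x_0}$ is nondecreasing, so $M_{x_0}'$ exists a.e. Writing $h=r^2$, this says exactly that $\frac{d}{dr}\log F_{x_0}(r)\le C\vartheta(r)/r$. Integrating from $r_2$ to $r_1$ then gives \eqref{endpointinequality}. Since $\log M_{x_0}$ is monotone, we have $\log M_{x_0}(h_1)-\log M_{x_0}(h_2)\le\int_{h_2}^{h_1}(\log M_{x_0})'$, where the singular part only helps. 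To keep this step rigorous, I will actually prove an integrated version over $[h_2,h_1]$.

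\emph{Step 1: freezing the densities.} On $D_r(u,x_0)$ with $r\le r_0$, \eqref{diameterup} gives $|f-f(x_0)|\le\vartheta(r)$. Likewise $|g-g(y_0)|\le\vartheta(r)$ on $D_r(v,y_0)$. Since $f,g\ge\lambda$, every integral of $f$ over a subset of $D_r(u,x_0)$, including integrals of $f/|\nabla\Psi_{x_0}|$ over level sets, changes only by a factor $1\pm C\vartheta(r)$ when $f$ is replaced by $f(x_0)$. The same holds on the target side. This is how I avoid derivatives of $f$ and $g$: the Collins--Tong computation \cite[Proposition~3.1]{CollinsTong2025} produces error terms involving $Df$ and $Dg$, but here those terms never appear. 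Only zeroth-order comparisons are needed, and each costs a factor $e^{C\vartheta}$.

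\emph{Step 2: the first-variation identity with constant weights.} By the coarea formula,
\[
 M_{x_0}'(h)=\int_{\{\Psi_{x_0}=h\}}\frac{f}{|\nabla\Psi_{x_0}|}.
\]
By \eqref{transportedmass}, the same quantity equals the corresponding target-side integral of $g/|\nabla\Psi^*_{y_0}|$. I split $2hM_{x_0}'(h)$ into a source half and a target half. On the level set I use $h=\Psi_{x_0}$. Then I apply the Collins--Tong algebra: with $\nabla\Psi_{x_0}=(y-y_0)+D^2u\,(x-x_0)$ and $D^2v=(D^2u)^{-1}\circ Dv$, the sum of the two halves is bounded by
\[
 \int_{\{\Psi=h\}} f(x_0)\,(x-x0)\cdot\nu+\int_{\{\Psi^*=h\}} g(y_0)\,(y-y_0)\cdot\nu^*
\]
up to a factor $1+C\vartheta$. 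Here $\nu$ and $\nu^*$ are the outer normals of the level sets; the pointwise inequality comes from AM--GM applied to the two quadratic forms $(x-x_0)\cdot D^2u\,(x-x_0)$ and $(y-y_0)\cdot D^2v\,(y-y_0)$. Next I apply the divergence theorem on $D_{\sqrt h}(u,x_0)$ to the constant-coefficient field $f(x_0)(x-x_0)$, whose divergence is $nf(x_0)$. The remaining boundary portion lies on $\partial\Omega$, and there $(x-x_0)\cdot\nu_{\partial\Omega}\ge0$ because $\Omega$ is convex and $x_0\in\overline\Omega$. Hence
\[
 \int_{\{\Psi=h\}}f(x_0)(x-x_0)\cdot\nu\le nf(x_0)\,|D_{\sqrt h}(u,x_0)|\le n(1+C\vartheta)M_{x_0}(h).
\]
The same argument on $\Omega^*$, with $y_0\in\overline{\Omega^*}$, bounds the target term by $n(1+C\vartheta)M_{x_0}(h)$. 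Adding the two bounds gives $2hM'\le 2n(1+C\vartheta)M$, which is the claimed inequality. The sign of the boundary terms is precisely what makes the estimate one-sided in the right direction: the formula is almost nonincreasing in $r$.

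\emph{Main obstacle.} The hard part is justifying Step 2 at low regularity. Here $u$ is only $C^{1,\alpha_0}$ up to $\partial\Omega$, the level sets of $\Psi_{x_0}$ need not be rectifiable a priori, and $D^2u$ exists only a.e. I will first work in the interior. Because $f$ and $g$ are continuous and positive, Caffarelli's interior $W^{2,p}$ theory gives $u\in W^{2,p}_{\rm loc}(\Omega)$, so $\Psi_{x_0}\in W^{1,p}_{\rm loc}$. I will then replace the level-set integrals by their integrated forms over shells $\{h_2<\Psi<h_1\}$, testing with cutoffs $\eta_\varepsilon(\Psi)$ for a Lipschitz approximation $\eta_\varepsilon$ of $1_{(-\infty,h)}$, and multiplying by a cutoff $\chi_\delta$ of $\Omega$. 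The term where the derivative falls on $\chi_\delta$ has the favourable sign, by convexity as above, in the limit $\delta\to0$. If this proves too delicate, the fallback is to mollify $f$ and $g$ and to approximate $\Omega$ and $\Omega^*$ by smooth uniformly convex domains. In that setting the smooth solutions of \cite{Caffarelli1996Boundary} are available, the computation above is classical, and the constants depend only on $n,\lambda,\Lambda$ and on the moduli, which are stable under these approximations. One then passes to the limit using the stability of optimal maps and the uniform estimates of Proposition~\ref{classicalproperties}.
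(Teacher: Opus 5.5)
Your argument is correct in substance. It differs from the paper's only in how the density error is absorbed.

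\textbf{How the two routes differ.} The paper applies the divergence theorem to the variable field $f(x)\,(x-x_0)$, which produces $I_f(h)=\int_{\{\Psi<h\}}(x-x_0)\cdot Df$. To control this term without derivatives of $f$, it shows that $D_{\sqrt h}(u,x_0)$ is star-shaped about $x_0$, because $t\mapsto\theta\cdot(Du(x_0+t\theta)-y_0)$ is nonnegative and nondecreasing. It then integrates by parts along rays, turning $I_f$ into differences $f(\text{endpoint})-f(\text{interior point})$ bounded by $\frac n\lambda\omega_f(\operatorname{diam})M$.

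You instead freeze $f$ at $f(x_0)$ inside the flux $A_f(h)=\int_{\{\Psi=h\}}f\,\frac{(x-x_0)\cdot\nabla\Psi}{|\nabla\Psi|}\,d\mathcal H^{n-1}$, and apply the divergence theorem to the constant field $f(x_0)(x-x_0)$. This is legitimate, but you should state why. On the level set, $(x-x_0)\cdot\nabla\Psi=\Psi+(x-x_0)\cdot D^2u\,(x-x_0)\ge h>0$, so the integrand is positive and freezing costs only a factor $1+\vartheta/\lambda$.

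What each route buys:
\begin{itemize}
\item Your version uses only zeroth-order information on $f,g$ and needs no star-shapedness in the computation. It could therefore in principle be run directly for continuous densities, which is what your primary plan attempts.
\item The paper's computation genuinely uses $Df$, so it is tied to the smoothed problem. In exchange it gives an exact identity exhibiting every error term.
\end{itemize}
Your fallback is exactly the paper's Step~2. If you use it, note that $M_j(h)\to M_{x_0}(h)$ at a fixed $h$ requires $|\{\Psi_{x_0}=h\}|=0$. The paper gets this from the same radial monotonicity.

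\textbf{First slip: a lost factor of $2$.} Splitting $2hM'$ into a source half and a target half only yields $2hM'\le A_f+A_g$. That already follows from $(x-x_0)\cdot\nabla\Psi\ge h$ alone. It gives $hM'\le n(1+C\vartheta)M$, which is almost-monotonicity of $r^{-2n}M(r^2)$, not of $F_{x_0}$. The AM--GM step gives twice as much. Take $y=Du(x)$, $H=D^2u(x)$, and pull the target flux back via $g(Du)\det D^2u=f$. Then
\[
 A_f+A_g=\int_{\{\Psi=h\}}\frac{f}{|\nabla\Psi|}\Bigl(2h+(x-x_0)\cdot H(x-x_0)+(y-y_0)\cdot H^{-1}(y-y_0)\Bigr)\,d\mathcal H^{n-1}\ge4hM'(h),
\]
because $(x-x_0)\cdot H(x-x_0)+(y-y_0)\cdot H^{-1}(y-y_0)\ge2(x-x_0)\cdot(y-y_0)=2h$. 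Combined with $A_f,A_g\le n(1+C\vartheta)M$, this is exactly $hM'\le\frac n2(1+C\vartheta)M$.

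\textbf{Second slip: the singular part.} Your claim that the singular part of $dM$ ``only helps'' is backwards. For nondecreasing $M$ one has $M(h_1)-M(h_2)\ge\int_{h_2}^{h_1}M'$, so an a.e.\ bound on $M'$ says nothing about a Cantor part. You need one of the following:
\begin{itemize}
\item Absolute continuity of $M$. This holds in the smooth approximating problems, since $|\nabla\Psi|\ge h/|x-x_0|$ on the bounded level sets; see the proof of Lemma~\ref{qrigid-local-calculus}(iv).
\item Your shell formulation, which bounds the full distributional derivative of $M$.
\end{itemize}
So the integrated version is necessary, not merely a convenience.
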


\begin{proof}
After a change of coordinates and subtraction of an affine function, we may assume that $x_0=y_0=0$ and 
\[
 u(0)=v(0)=0,\qquad Du(0)=Dv(0)=0,\qquad \Psi(x)=x\cdot Du(x),\qquad \Psi^*(y)=y\cdot Dv(y).
\]

\smallskip
\noindent\emph{Step 1: the smooth case.}
Assume first that the domains are smooth and uniformly convex and that the data and potentials are smooth. These assumptions are used only in the coarea formula, the divergence theorem, and the classical change of variables. The resulting estimate is independent of all higher-order norms. Set \(T=Du\). A direct computation shows
\[
 D\Psi=T+D^2u(x)x,\qquad D\Psi^*(T(x))=x+\left(D^2u(x)\right)^{-1}T(x).
\]
For almost every \(h>0\), we have
\begin{equation}\label{diffM}
 M'(h)=\int_{\Omega\cap\{\Psi=h\}}\frac{f}{|D\Psi|}\,\dd\mathcal H^{n-1},
\end{equation}
where \(M(h)=M_{x_0}(h)\). Define
\[
 \begin{aligned}
 A_f(h)&=\int_{\Omega\cap\{\Psi=h\}}f\frac{x\cdot D\Psi}{|D\Psi|}\,\dd\mathcal H^{n-1},\\
 I_f(h)&=\int_{\Omega\cap\{\Psi<h\}}x\cdot Df\,\dd x,\\
 \mathcal B_f(h)&=\int_{\partial\Omega\cap\{\Psi<h\}}f\,x\cdot\nu_\Omega\,\dd\mathcal H^{n-1},
 \end{aligned}
\]
and define \(A_g,I_g,\mathcal B_g\) analogously on the target. The divergence theorem gives
\begin{equation}\label{stdivergence}
 A_f+\mathcal B_f=nM+I_f,\qquad A_g+\mathcal B_g=nM+I_g.
\end{equation}
A direct computation shows that 
$$d\mathcal{H}^{n-1}_y
=
\det D^2u\,
\frac{|D\Psi^*(T(x))|}{|D\Psi(x)|}
\,d\mathcal{H}^{n-1}_x.$$
Pulling the target coarea formula back by \(T\), as in \cite[proof of Proposition~3.1]{CollinsTong2025}, we obtain
\begin{equation}\label{stsum}
 A_f(h)+A_g(h)=4hM'(h)+\mathcal E(h),
\end{equation}
where
\[
 \mathcal E(h)=\int_{\Omega\cap\{\Psi=h\}}\frac{f}{|D\Psi|}\left|\left(D^2u(x)\right)^{1/2}x-\left(D^2u(x)\right)^{-1/2}T(x)\right|^2\,\dd\mathcal H^{n-1}\ge0.
\]
Combining \eqref{stdivergence} and \eqref{stsum}, we find
\begin{equation}\label{exactidentity}
 4hM'(h)-2nM(h)=I_f(h)+I_g(h)-\mathcal B_f(h)-\mathcal B_g(h)-\mathcal E(h).
\end{equation}
Since $0\in \overline{\Omega}\cap \overline{\Omega^*}$, by convexity of the two domains, we have 
\[
 x\cdot\nu_\Omega(x)\ge0,\qquad y\cdot\nu_{\Omega^*}(y)\ge0.
\]
Consequently, \(\mathcal B_f,\mathcal B_g\ge0\).

It remains to estimate \(I_f\) and \(I_g\). On every admissible ray \(x=t\theta\), the function
\[
 p_\theta(t)=\theta\cdot Du(t\theta)
\]
is nonnegative and nondecreasing. Thus \(D_{\sqrt h}(u,0)\) is star-shaped about the origin. If \(R_h(\theta)\) is its radial endpoint, integration by parts on each ray gives
\[
 I_f(h)=n\int_{\mathbb S^{n-1}}\int_0^{R_h(\theta)}
 \bigl[f(R_h(\theta)\theta)-f(t\theta)\bigr]
 t^{n-1}\,\dd t\,\dd\theta.
\]
Therefore
\begin{equation}\label{densityerrorbound}
\begin{aligned}
 (I_f(h))_+&\le\frac n\lambda\omega_f\!\left(\operatorname{diam}D_{\sqrt h}(u,0)\right)M(h),\\
 (I_g(h))_+&\le\frac n\lambda\omega_g\!\left(\operatorname{diam}D_{\sqrt h}(v,0)\right)M(h).
\end{aligned}
\end{equation}
By \eqref{densityerrorbound}, \eqref{exactidentity} and \eqref{diameterup}, we have
\[
 \frac{\dd}{\dd h}\bigl(h^{-n/2}M(h)\bigr)\le C\frac{\vartheta(\sqrt h)}h h^{-n/2}M(h)
\]
for almost every \(h\in(0,r_0^2]\). Since \(F_{x_0}(r)=r^{-n}M(r^2)\), it follows 
\begin{equation*}
 F_{x_0}'(r)\le C\frac{\vartheta(r)}rF_{x_0}(r)
 \quad\text{for a.e. }r\in(0,r_0].
\end{equation*}
in the smooth setting. Integration proves \eqref{endpointinequality}.

\smallskip
\noindent\emph{Step 2: approximation.}
Let \(P_{\overline\Omega},P_{\overline{\Omega^*}}\) be the metric projections onto the closed convex domains, namely, 
$$P_{\overline\Omega}(x)=\operatorname*{argmin}_{z\in \overline\Omega}|x-z|, \ \ P_{\overline{\Omega^*}}(x)=\operatorname*{argmin}_{z\in \overline{\Omega^*}}|x-z|.$$
Choose smooth uniformly convex outer approximations
\[
 \Omega_j\rightarrow\overline\Omega,\qquad
 \Omega_j^*\rightarrow\overline{\Omega^*},
\]
and for \(\varepsilon_j\rightarrow0\), define
\[
 f_j=(f\circ P_{\overline\Omega})*\rho_{\varepsilon_j},\qquad g_j=\kappa_j\bigl((g\circ P_{\overline{\Omega^*}})*\rho_{\varepsilon_j}\bigr),\qquad \kappa_j=\frac{\int_{\Omega_j}f_j\,\dd x}{\int_{\Omega_j^*}\bigl((g\circ P_{\overline{\Omega^*}})*\rho_{\varepsilon_j}\bigr)\,\dd y}.
\]
The metric projections are one-Lipschitz, convolution does not increase a modulus of continuity, and the outer approximations converge in measure. Hence \(\kappa_j\to1\), and, for all sufficiently large \(j\),
\begin{equation}\label{approximationdensity}
 \frac\lambda2\le f_j,g_j\le2\Lambda,\qquad \omega_{f_j}\le\omega_f,\qquad \omega_{g_j}\le2\omega_g.
\end{equation}

Let \(u_j\) be the corresponding transport potentials, extended to \(\mathbb R^n\) as in Section~\ref{section2}. After the normalization, the approximating domains have uniform inner and outer radii, and the density ratios are uniformly bounded. Hence Proposition~\ref{classicalproperties} applies with constants independent of \(j\). After enlarging \(C_0\) and decreasing \(r_0\), estimate \eqref{diameterup} holds uniformly for the approximating problems. The uniform \(C^{1,\alpha_0}\) estimate then implies
\begin{equation}\label{gradientconvergenceapproximation}
 Du_j\longrightarrow Du\qquad\text{locally uniformly in }\mathbb R^n.
\end{equation}
For fixed \(x_0\in\overline\Omega\), set \(y_j=Du_j(x_0)\) and
\[
 \Psi_j(x)=(x-x_0)\cdot(Du_j(x)-y_j).
\]
Then \(\Psi_j\to\Psi_{x_0}\) uniformly on \(\overline\Omega\).

Since \(t\mapsto p_\theta(t)\) is nondecreasing, each positive level set of
\[
\Psi_{x_0}(t\theta)=t\,p_\theta(t)
\]
meets every admissible ray in at most one point. Hence every positive level set of \(\Psi_{x_0}\) has Lebesgue measure zero by Fubini's theorem in polar coordinates. Since the boundary of a convex domain has Lebesgue measure zero, dominated convergence implies, for every \(h>0\),
\begin{equation}\label{convergenceapproximation}
 \int_{\mathbb R^n}\mathbf1_{\Omega_j}\mathbf1_{\{\Psi_j<h\}}f_j\,\dd x\longrightarrow\int_{\mathbb R^n}\mathbf1_\Omega\mathbf1_{\{\Psi_{x_0}<h\}}f\,\dd x.
\end{equation}
The smooth estimate \eqref{endpointinequality} for $(\Omega_j,\Omega_j^*,f_j,g_j,u_j)$ is uniform in \(j\). Passing to the limit in the integrated inequality proves \eqref{endpointinequality} for arbitrary bounded convex domains.

\end{proof}

\section{Blow-up analysis}
\label{secblowup}
We work directly with a blow-up sequence of the original transport problem.
Fix \((x_0,y_0)\in\overline\Omega\times\overline{\Omega^*}\) with
\(y_0=D u(x_0)\), and let \(h_j>0\) satisfy \(h_j\to0\). For each \(j\), let \(A_j\) be the positive definite linear map such that
\[
 B_1\subset A_j\bigl(S_{h_j}^c[u](x_0)-x_0\bigr)
 \subset B_{n^{3/2}}.
\]
Thus \(A_j(S_{h_j}^c[u](x_0)-x_0)\) is in John position. Set
\[
 \tilde x=A_j(x-x_0),\qquad
 \tilde y=h_j^{-1}A_j^{-t}(y-y_0),
\]
and
\[
 \Omega_j=A_j(\Omega-x_0),\qquad
 \Omega_j^*=h_j^{-1}A_j^{-t}(\Omega^*-y_0).
\]
The normalized potentials are
\begin{equation*}\label{afnormpotentials}
\begin{aligned}
 \tilde u_j(\tilde x)&=h_j^{-1}\bigl[\overline u(x_0+A_j^{-1}\tilde x)-\overline u(x_0)-y_0\cdot A_j^{-1}\tilde x\bigr],\\
 \tilde v_j(\tilde y)&=h_j^{-1}\bigl[\overline v(y_0+h_jA_j^t\tilde y)-\overline v(y_0)-x_0\cdot h_jA_j^t\tilde y\bigr].
\end{aligned}
\end{equation*}
At corresponding points,
\[
 \tilde y=D\tilde u_j(\tilde x),\qquad
 \tilde x=D\tilde v_j(\tilde y).
\]
After multiplying the two measures by the same constant \(a_j>0\), set
\[
 \rho_j(\tilde x)=a_j|\det A_j|^{-1}f(x_0+A_j^{-1}\tilde x),\qquad
 \rho_j^*(\tilde y)=a_jh_j^n|\det A_j|g(y_0+h_jA_j^t\tilde y),
\]
and
\[
 \dd\mu_j=\rho_j\mathbf1_{\Omega_j}\,\dd x,\qquad
 \dd\nu_j=\rho_j^*\mathbf1_{\Omega_j^*}\,\dd y.
\]
Then the minimal convex extensions \(\tilde u_j,\tilde v_j\) satisfy
\begin{equation}\label{datanormalized}
 (D\tilde u_j)_\#\mu_j=\nu_j,\qquad \tilde u_j=(\tilde v_j+I_{\overline{\Omega_j^*}})^*,\qquad \tilde v_j=(\tilde u_j+I_{\overline{\Omega_j}})^*.
\end{equation}
Here \(I_E\) denotes the convex indicator of \(E\). For a positive function \(\rho\) on a set \(E\) with \(\inf_E\rho>0\), define
\[
 \operatorname{osc}^{\rm rel}_E\rho=\frac{\sup_E\rho-\inf_E\rho}{\inf_E\rho}.
\]
By \eqref{eqdata},
\begin{equation}\label{densityratio}
 \frac{\sup_{\Omega_j}\rho_j}{\inf_{\Omega_j}\rho_j}+\frac{\sup_{\Omega_j^*}\rho_j^*}{\inf_{\Omega_j^*}\rho_j^*}\le D_0.
\end{equation}
Here one may take \(D_0=2\Lambda/\lambda\).
By \cite[Lemma~2.6 and Corollary~2.5]{JhaveriSavin2022}, the measures \(\mu_j\) and \(\nu_j\) have a common affine doubling constant depending only on \(n,D_0\). Moreover, by \eqref{globalupper}, we have $\tilde u_j,\tilde v_j\in C^1_{\rm loc}(\mathbb R^n).$

Under the above normalization, we assume throughout that
\begin{equation}\label{john}
\begin{gathered}
 0\in\overline{\Omega_j}\cap\overline{\Omega_j^*},\qquad \tilde u_j(0)=\tilde v_j(0)=0,\qquad D\tilde u_j(0)=D\tilde v_j(0)=0,\\
 K_j:=S_1^c[\tilde u_j](0)=\{x:\tilde u_j(x)-p_j\cdot x<1\},\qquad B_1\subset K_j\subset B_{n^{3/2}}.
\end{gathered}
\end{equation}
The barycenter of \(K_j\) is the origin. We also assume
\begin{equation}\label{massone}
 \mu_j(D_1(\tilde u_j,0))=\nu_j(D_1(\tilde v_j,0))=1,
\end{equation}
and, for every fixed \(R<\infty\),
\begin{equation}\label{relativeosc}
 \operatorname{osc}^{\rm rel}_{D_R(\tilde u_j,0)}\rho_j+\operatorname{osc}^{\rm rel}_{D_R(\tilde v_j,0)}\rho_j^*\longrightarrow0.
\end{equation}
Finally, assume that
\begin{equation}\label{Fflat}
 r^{-n}\mu_j(D_r(\tilde u_j,0))\longrightarrow1
\end{equation}
uniformly for \(r\) in compact subsets of \((0,\infty)\). The arguments below use only \eqref{datanormalized}--\eqref{Fflat}, and therefore apply to any normalized sequence satisfying these conditions.

The inclusions \eqref{include-D-S} and \eqref{include-S-Sc}, together with Lemmas~\ref{lemslope} and \ref{fixratio}, remain valid after normalization. We also use the following affine form of the subgradient image estimate. If \(K=S_h^c[\tilde u_j](0)\) and \(A_hK\) is in John position, then
\begin{equation}\label{gradnormalized}
 B_c\subset h^{-1}A_h^{-t}\bigl(\partial\tilde u_j(K)-p_h\bigr)\subset B_C,
\end{equation}
where \(c,C\) depend only on \(n,D_0\) \cite[Corollary 2.2]{Caffarelli1996Boundary}.

The first consequence is uniform compactness at every fixed extrinsic scale.

\begin{proposition}\label{noescape}
For every fixed \(0<R<\infty\) there is \(C_R<\infty\), independent of \(j\), such that whenever \(x\in D_R(\tilde u_j,0)\) and \(y=D\tilde u_j(x)\), one has
\begin{equation}\label{xybound}
 |x|+|y|\leq C_R.
\end{equation}
Consequently, suppose that 
\begin{equation}\label{noescapecondition}
 \sup_{x\in D_R(\tilde u_j,0)}\operatorname{dist}(x,L)\longrightarrow0
\end{equation}
for a linear subspace $L$. Then
\begin{equation}\label{normalproduct}
 \sup_{\substack{x\in D_R(\tilde u_j,0)\\y=D\tilde u_j(x)}}
 \left|P_{L^\perp}x\cdot P_{L^\perp}y\right|\longrightarrow0.
\end{equation}
\end{proposition}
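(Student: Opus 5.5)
The bound \eqref{xybound} will come from two facts: $D_R(\tilde u_j,0)$ sits inside a centered section of height comparable to $R^2$, and the John normalization \eqref{john} of $K_j=S_1^c[\tilde u_j](0)$ controls both that section and its gradient image. Since $D_r(\tilde u_j,0)$ increases with $r$, I may assume $R\ge1$.

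\emph{Bound on $x$.} By \eqref{include-D-S} and \eqref{include-S-Sc}, applied after normalization,
\[
D_R(\tilde u_j,0)\subset S_{R^2}[\tilde u_j](0)\subset S^c_{h}[\tilde u_j](0),\qquad h:=CR^2\ge1 .
\]
This centered section is bounded, because its existence as a section with barycenter $0$ requires boundedness (as in the proof of Proposition~\ref{classicalproperties}(v)). I then apply Lemma~\ref{fixratio} at height $h$ with $\theta=1/h$. The two inclusions give
\[
\tfrac{1}{hn^3}\,S^c_h[\tilde u_j](0)\subset K_j \subset n^3 S^c_h[\tilde u_j](0).
\]
Combined with $B_1\subset K_j\subset B_{n^{3/2}}$, this yields
\[
B_{n^{-3}}\subset S^c_h[\tilde u_j](0)\subset B_{Cn^{9/2}R^2}.
\]
Hence $|x|\le C_R$.

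\emph{Bound on $y$.} Let $y=D\tilde u_j(x)$ with $x\in D_R(\tilde u_j,0)$, so that $y\in\partial\tilde u_j(S_h^c)$. I use \eqref{gradnormalized} with the normalizing map $A_h$ of $S_h^c$, which gives
\[
y\in p_h+hA_h^{t}B_C .
\]
Three pieces are needed.
\begin{itemize}[leftmargin=2.2em]
\item Since $S_h^c$ contains $B_{n^{-3}}$ and $A_hS_h^c\subset B_{n^{3/2}}$, we get $\|A_h\|=\|A_h^t\|\le n^{3/2}n^3$.
\item Lemma~\ref{lemslope} applies, because $\tilde u_j(0)=0$ and $D\tilde u_j(0)=0$ by \eqref{john}. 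It gives $|p_h|\le h/n^{-3}=Cn^3R^2$.
\item Also $h=CR^2$.
\end{itemize}
Together these give $|y|\le C_R$, with $C_R$ depending only on $n$, $D_0$ and $R$, and in particular not on $j$.

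\emph{Consequence \eqref{normalproduct}.} For $x\in D_R(\tilde u_j,0)$ we have $|P_{L^\perp}x|=\operatorname{dist}(x,L)$, and $|P_{L^\perp}y|\le|y|\le C_R$ by \eqref{xybound}. Therefore
\[
|P_{L^\perp}x\cdot P_{L^\perp}y|\le C_R\sup_{D_R(\tilde u_j,0)}\operatorname{dist}(\cdot,L)\to0
\]
by \eqref{noescapecondition}.

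The only delicate point is comparing $S^c_h$ with $K_j$ in both directions. The outer inclusion bounds $x$. The inner inclusion is what bounds both $A_h^t$ and $p_h$, since the barycenters are the same but the affine functions differ. Lemma~\ref{fixratio} handles both inclusions uniformly in $j$, so I expect no real obstacle.
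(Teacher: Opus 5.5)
Your proof is correct and follows the paper's own argument. You use the chain $D_R(\tilde u_j,0)\subset S_{R^2}[\tilde u_j](0)\subset S^c_{CR^2}[\tilde u_j](0)$, Lemma~\ref{fixratio} in both directions against $K_j$ for uniform inner and outer radii, Lemma~\ref{lemslope} for $p_h$, \eqref{gradnormalized} for $y$, and then $|P_{L^\perp}x|=\operatorname{dist}(x,L)$ together with $|y|\le C_R$, exactly as the paper does, except that you make the constants explicit (e.g.\ $\|A_h\|\le n^{9/2}$, $|p_h|\le n^3h$) where the paper leaves them implicit.
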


\begin{proof}
By \eqref{include-S-Sc} and \eqref{include-D-S}
\[
 D_R(\tilde u_j,0)\subset S_{R^2}[\tilde u_j](0)
 \subset S^c_{CR^2}[\tilde u_j](0)\cap\Omega_j.
\]
Lemma~\ref{fixratio}, used in both directions, shows that the last centered sub-level set has inner and outer radii bounded in terms of $R$ and the fixed data. Lemma~\ref{lemslope} bounds $p_h$. Applying \eqref{gradnormalized} to that section bounds every corresponding subgradient $y$. This proves \eqref{xybound}. Condition \eqref{noescapecondition} gives \(P_{L^\perp}x\to0\) uniformly on \(D_R(\tilde u_j,0)\), and \eqref{normalproduct} follows from the bound for $y$.
\end{proof}

We next choose bounded truncations that contain a prescribed extrinsic ball and remain uniformly separated from the truncating level.
\begin{lemma}
\label{truncation11}
Under \eqref{john}, fix $R<\infty$. There are $H=H(R)>R^2$, $R_H<\infty$, and $d_H>0$, independent of $j$, such that the convex sets
\begin{equation*}
  \Omega_{j,H}:=\Omega_j\cap\{\tilde u_j<H\}
\end{equation*}
are uniformly bounded, and satisfy
\begin{equation*}
 D_R(\tilde u_j,0)\subset  \Omega_{j,H}\subset D_{R_H}(\tilde u_j,0),
 \qquad
 \operatorname{dist}\bigl(D_R(\tilde u_j,0),\Omega_j\cap\{\tilde u_j=H\}\bigr)
 \ge d_H.
\end{equation*}
\end{lemma}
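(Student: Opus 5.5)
The plan is to compare the three families of sets, \(D_r(\tilde u_j,0)\), \(S_h[\tilde u_j](0)\) and \(S^c_h[\tilde u_j](0)\cap\Omega_j\), at heights that depend only on \(R\) and the data. Recall from \eqref{john} that \(\tilde u_j\ge0\), \(\tilde u_j(0)=0\) and \(D\tilde u_j(0)=0\). Hence the affine function \(\ell_0\) defining \(S_h[\tilde u_j](0)\) vanishes identically, and
\[
\Omega_{j,H}=\Omega_j\cap\{\tilde u_j<H\}=S_H[\tilde u_j](0).
\]
This set is convex because \(\Omega_j\) is convex and \(\tilde u_j\) is convex.

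\emph{Inclusions.} By \eqref{include-D-S}, \(D_R(\tilde u_j,0)\subset S_{R^2}[\tilde u_j](0)\subset\Omega_{j,H}\) whenever \(H\ge R^2\). For the other inclusion, the left half of \eqref{include-D-S} gives \(\tfrac12 S_{R_H^2}[\tilde u_j](0)\subset D_{R_H}(\tilde u_j,0)\). So it suffices to show
\[
S_H[\tilde u_j](0)\subset\tfrac12 S_{R_H^2}[\tilde u_j](0).
\]
By \eqref{include-S-Sc}, \(S_H\subset S^c_{CH}\cap\Omega_j\) and \(S^c_{cR_H^2}\cap\Omega_j\subset S_{R_H^2}\). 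Lemma~\ref{fixratio} gives \(S^c_{CH}\subset n^3S^c_{h'}\) whenever \(h'\ge CH\). Applying Lemma~\ref{fixratio} once more with ratio \(\theta=CH/h'\), the set \(S^c_{CH}\) contains a multiple of \(S^c_{h'}\) that shrinks like \(\theta\). Choosing \(h'=cR_H^2\) large enough, depending only on \(H\) and \(n\), then gives \(S^c_{CH}\subset\tfrac12 S^c_{cR_H^2}\). Since \(\Omega_j\) is convex and contains \(0\), we have \(\Omega_j\subset\tfrac12\Omega_j\)? That is false in general. Instead I use that \(\tfrac12(S^c\cap\Omega_j)\supset\tfrac12 S^c\cap\tfrac12\Omega_j\), together with the fact that \(x\in\Omega_j\) implies \(2x\)? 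Also false. So I argue pointwise with convexity of \(\tilde u_j\): if \(\tilde u_j(x)<H\) with \(x\in\Omega_j\), then I need \(2x\in S_{R_H^2}\). Convexity along the segment from \(0\) to \(2x\) and uniform boundedness handle the value bound, but \(2x\) may leave \(\Omega_j\). The clean fix is to work with \(D_r\) directly. On \(\Omega_j\), the quantity \(x\cdot D\tilde u_j(x)\) is bounded on \(S_H\) by \(C_H\), because by Proposition~\ref{noescape}-type reasoning both \(|x|\) and \(|D\tilde u_j(x)|\) are bounded there (via \eqref{gradnormalized} and Lemma~\ref{lemslope} on \(S^c_{CH}\), whose radii are controlled by Lemma~\ref{fixratio} from \(B_1\subset K_j\subset B_{n^{3/2}}\)). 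So \(S_H\subset D_{R_H}\) with \(R_H^2=C_H+1\). The same radius control shows that \(\Omega_{j,H}\subset S^c_{CH}\) is uniformly bounded.

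\emph{Separation.} This is the main step. If \(x\in D_R(\tilde u_j,0)\), then \(\tilde u_j(x)<R^2\) by \eqref{include-D-S}. If \(z\in\Omega_j\) satisfies \(\tilde u_j(z)=H\), then
\[
H-R^2\le \tilde u_j(z)-\tilde u_j(x)\le L|z-x|,
\]
where \(L\) bounds \(|D\tilde u_j|\) on the convex hull of the two points. The segment from \(x\) to \(z\) lies in \(\Omega_j\cap\{\tilde u_j\le H\}\) by convexity. On this set, \(\partial\tilde u_j\) is bounded by \eqref{gradnormalized} applied to a centered section \(S^c_{C'H}\) containing it; Lemma~\ref{lemslope} together with Lemma~\ref{fixratio} gives a \(j\)-independent bound \(L=L_H\). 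Therefore
\[
|z-x|\ge (H-R^2)/L_H=:d_H,
\]
which is positive once \(H>R^2\), for example \(H=2R^2+1\).

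The obstacle I expect is the uniformity of \(L_H\). The gradient image estimate \eqref{gradnormalized} is stated for the full subgradient image of a centered section. One must check that the John normalization \(A_h\) of \(S^c_{C'H}[\tilde u_j](0)\) has norms bounded in terms of \(H\), which Lemma~\ref{fixratio} supplies from \(B_1\subset K_j\subset B_{n^{3/2}}\).
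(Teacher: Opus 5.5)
Your final argument is the paper's proof. You bound $|x|+|D\tilde u_j(x)|$ uniformly on $\Omega_j\cap\{\tilde u_j\le H\}$ via \eqref{include-S-Sc}, Lemma~\ref{fixratio}, Lemma~\ref{lemslope} and \eqref{gradnormalized}, and then read off uniform boundedness, the inclusion into $D_{R_H}(\tilde u_j,0)$, and the Lipschitz separation $d_H=(H-R^2)/L_H$. Delete the abandoned detour through $S_H\subset\frac12 S_{R_H^2}$ from the write-up: it fails because $2x$ may leave $\Omega_j$, as you noticed yourself.
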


\begin{proof}

After subtracting a supporting plane at the origin, \(\tilde u_j\ge0\). For \(y=D\tilde u_j(x)\), convexity gives
\begin{equation}\label{Ujxy}
 \tilde u_j(x)\le x\cdot y.
\end{equation}
Consequently
\[
 D_R(\tilde u_j,0)\subset\Omega_j\cap\{\tilde u_j<R^2\}.
\]
Fix \(H=R^2+1\). By \eqref{include-S-Sc},
\[
 \Omega_j\cap\{\tilde u_j<H+1\}
 \subset S^c_{C(H+1)}[\tilde u_j](0)\cap\Omega_j.
\]
Lemma~\ref{fixratio} and \eqref{john} give uniform inner and outer radii for the centered sub-level set on the right. Lemma~\ref{lemslope} and \eqref{gradnormalized} then imply
\begin{equation}\label{uniformbound}
 \sup_{\substack{x\in\Omega_j\\\tilde u_j(x)\le H}}
 \bigl(|x|+|D\tilde u_j(x)|\bigr)\le L_H
\end{equation}
with \(L_H\) independent of \(j\). In particular, \( \Omega_{j,H}\) is uniformly bounded.

If \(x\in D_R(\tilde u_j,0)\) and \(z\in\Omega_j\cap\{\tilde u_j=H\}\), then \([x,z]\subset\Omega_j\cap\{\tilde u_j\le H\}\). By \eqref{uniformbound}, \(\tilde u_j\) is \(L_H\)-Lipschitz on this segment, while \eqref{Ujxy} implies
\[
 L_H|z-x|\ge \tilde u_j(z)-\tilde u_j(x)>H-R^2=1.
\]
Thus \(d_H=L_H^{-1}\) works. Finally, for \(x\in  \Omega_{j,H}\), \eqref{uniformbound} implies
\[
 x\cdot D\tilde u_j(x)\le L_H^2.
\]
Taking \(R_H^2>L_H^2\) proves \( \Omega_{j,H}\subset D_{R_H}(\tilde u_j,0)\).
\end{proof}

The following elementary lemma identifies the support of a weak limit of measures  supported in uniformly bounded convex sets.
\begin{lemma}
\label{csptcollapse}
Let $E_j\subset B_R\subset\mathbb R^n$ be nonempty open convex sets and $c_j>0$. Suppose
\begin{equation}\label{measurelimit}
 \eta_j:=c_j\mathbf1_{E_j}\,\dd x\rightharpoonup\eta,
 \qquad
 0<\liminf_j\eta_j(E_j)\le
 \limsup_j\eta_j(E_j)<\infty.
\end{equation}
After passage to a subsequence, let $\overline E_j\to E$ in Hausdorff distance. Then
\begin{equation}\label{supportlimit}
 E=\operatorname{spt}\eta.
\end{equation}

The following localized formulation will be used. Suppose, in addition, that \(f_j\) is a continuous density on \(E_j\),
\[
 \varepsilon_j:=
 \sup_{E_j}\left|\frac{f_j}{c_j}-1\right|\longrightarrow0,
 \qquad
 \zeta_j:=f_j\mathbf1_{E_j}\,\dd x\rightharpoonup\zeta .
\]
If the mass bounds in \eqref{measurelimit} hold, then
\[
 \eta_j\rightharpoonup\zeta,
 \qquad
 \overline E_j\longrightarrow\operatorname{spt}\zeta
 \quad\text{in Hausdorff distance}.
\]
\end{lemma}

\begin{proof}
The inclusion $\operatorname{spt}\eta\subset E$ is immediate from Hausdorff convergence. Conversely, fix $x\in E$ and choose $x_j\in\overline E_j$ with $x_j\to x$. Replacing $x_j$ by an interior point at distance $o(1)$ does not change the argument. Put $D=2R$ and fix $0<\rho<D$. For $t=\rho/(4D)$, by convexity, we have
\[
 x_j+t(E_j-x_j)\subset E_j\cap B_{\rho/2}(x)
\]
for all large $j$. Hence
\begin{equation*}
 \eta_j(B_{\rho/2}(x))
 \ge t^n\eta_j(E_j).
\end{equation*}
Testing against a continuous cutoff which equals one on $B_{\rho/2}(x)$ and is supported in $B_\rho(x)$, and using \eqref{measurelimit}, gives $\eta(B_\rho(x))>0$. Since $\rho$ is arbitrary, $x$ belongs to $\operatorname{spt}\eta$, proving \eqref{supportlimit}.

Moreover,
\[
 \|\zeta_j-\eta_j\|_{\rm TV}
 \le\varepsilon_j\eta_j(E_j)\longrightarrow0,
\]
so \(\eta_j\rightharpoonup\zeta\). Every Hausdorff-convergent subsequence of \(\overline E_j\) has limit
\(\operatorname{spt}\zeta\) by the first part. By Blaschke compactness, we have
\[
 \overline E_j\longrightarrow\operatorname{spt}\zeta
\]
for the whole sequence.
\end{proof}

\section{Limits of normalized transports}
\label{sec5limit}

We pass to a limit in the normalized problems of Section~\ref{secblowup}, identify the limiting marginals, and reduce each lower-dimensional source limit to an optimal transport problem on its affine hull. The exact mass identity is preserved under this reduction. For an extended-valued convex function, \(\operatorname{cl}\) denotes its lower-semicontinuous closure. We first extract the limiting potentials and a locally finite transport plan.

\begin{lemma}
\label{diacompac}
Let a sequence of normalized transport problems satisfy \eqref{john}--\eqref{Fflat}. After passing to a subsequence there are a finite convex function  \(\tilde u_\infty:\mathbb R^n\to\mathbb R\), its Legendre transform
\[
 \tilde v_\infty:=\tilde u_\infty^*,
\]
and a locally finite transport plan \(\pi\) with the following properties.
\begin{enumerate}[label=\textup{(\roman*)}]
\item
\begin{equation}\label{Ujlimit}
 \tilde u_j\longrightarrow\tilde u_\infty
 \qquad\text{locally uniformly in }\mathbb R^n.
\end{equation}
Moreover, $\tilde u_\infty,\tilde v_\infty\ge 0, \tilde u_\infty(0)=\tilde v_\infty(0)=0$, and
\begin{equation}\label{dualidentification}
 \tilde v_\infty
 =\operatorname{cl}\bigl((\tilde v_\infty|_{\Omega_\infty^*})+I_{\Omega_\infty^*}\bigr),
 \qquad
 \tilde u_\infty=\tilde v_\infty^*.
\end{equation}
Let $\Omega_\infty^*:=\operatorname{int}(\operatorname{dom}\tilde v_\infty).$ Then, \(\Omega_\infty^*\ne\varnothing\), and for every \(Q\Subset \Omega_\infty^*\), we have
\begin{equation}\label{duallimit}
 Q\subset\Omega_j^*\quad\text{for all large }j,
 \qquad
 \tilde u_j^*\longrightarrow\tilde v_\infty
 \quad\text{uniformly on }Q.
\end{equation}

\item Consequently, for every \(h>0\),
\begin{equation}\label{ScUjlimit}
 \overline{S_h^c[\tilde u_j](0)}
 \longrightarrow
 \overline{S_h^c[\tilde u_\infty](0)}
\end{equation}
in Hausdorff distance.

\item Set $ \pi_j=(\operatorname{id},D\tilde u_j)_\#\mu_j$ and restrict each \(\pi_j\) to $G_R:=\{(x,y)\in\mathbb R^n\times\mathbb R^n:x\cdot y<R^2\}.$
Then, for every \(R>0\), 
\begin{equation}\label{limitplans}
 \pi_j\lfloor G_R\rightharpoonup \pi\lfloor G_R.
\end{equation}
Moreover,
\begin{equation}\label{piproperties}
 \spt\pi\subset
 \{(x,y):y\in\partial\tilde u_\infty(x)\}
 =\{(x,y):x\in\partial\tilde v_\infty(y)\},
 \qquad
 \pi(G_R)=R^n,
 \qquad
 \pi\{x\cdot y=R^2\}=0.
\end{equation}
\end{enumerate}
\end{lemma}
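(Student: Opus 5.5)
Compactness of the primal potentials comes first. Under \eqref{john}, $\tilde u_j\ge0$, $\tilde u_j(0)=0$, and $B_1\subset K_j\subset B_{n^{3/2}}$. Lemma~\ref{fixratio}, applied with $\theta=h^{-1}$ to the heights $h\ge1$ (and in the other direction for $h\le1$), gives uniform inner and outer radii of $S_h^c[\tilde u_j](0)$ for every fixed $h$. Lemma~\ref{lemslope} then bounds the slopes $p_h$, so $\tilde u_j$ is bounded on every fixed ball. It is also bounded below by a function that grows at infinity, because the outer radius of $S_h^c$ at fixed $h$ is uniform. Local uniform boundedness makes the convex functions locally uniformly Lipschitz. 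Arzel\`a--Ascoli then gives a subsequence with $\tilde u_j\to\tilde u_\infty$ locally uniformly, and $\tilde u_\infty$ is finite, nonnegative and vanishes at $0$. Define $\tilde v_\infty=\tilde u_\infty^*$. Then $\tilde v_\infty\ge0$ and $\tilde v_\infty(0)=-\inf\tilde u_\infty=0$.

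For the dual side, $\tilde u_j^*=\tilde v_j+I_{\overline{\Omega_j^*}}$ by \eqref{datanormalized}. Locally uniform convergence of finite convex functions that are uniformly superlinear on bounded sets implies convergence of the Legendre transforms $\tilde u_j^*\to\tilde v_\infty$ locally uniformly on $\operatorname{int}\operatorname{dom}\tilde v_\infty$, in the sense of epi-convergence (Wijsman/Attouch). Nonemptiness of $\Omega_\infty^*$ follows from \eqref{gradnormalized} applied at height $1$. We have $\partial\tilde u_j(K_j)\supset p_1+B_c$ with $|p_1|\le C$, so these balls lie in $\overline{\Omega_j^*}$. Along a subsequence they converge to a fixed ball $B'$, and on $B'$ the functions $\tilde u_j^*$ are uniformly bounded, since they are attained at points of $K_j\subset B_{n^{3/2}}$. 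Hence $B'\subset\operatorname{dom}\tilde v_\infty$.

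For \eqref{duallimit}, fix $Q\Subset\Omega_\infty^*$ and write $\tilde v_\infty\le M$ on a neighbourhood of $Q$. Epi-convergence gives $\limsup\tilde u_j^*\le M+1$ at the vertices of a polytope containing a neighbourhood of $Q$. By convexity $\tilde u_j^*<\infty$ there, so $Q\subset\operatorname{dom}\tilde u_j^*=\overline{\Omega_j^*}$, and after shrinking, $Q\subset\Omega_j^*$. Uniform convergence on $Q$ follows from pointwise convergence and equi-Lipschitz bounds of convex functions on compact subsets of their domains. The identity \eqref{dualidentification} holds because a closed convex function equals the closure of its restriction to the interior of its domain. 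Part (ii) follows from locally uniform convergence together with the convergence $p_h^{(j)}\to p_h^{(\infty)}$ of centering slopes. For that convergence I use uniqueness of the centering slope (Caffarelli) and the fact that the sets are uniformly bounded with nonempty interior, so the barycenter map is continuous. The strict inequality defining the section makes the boundary of the limiting section a null set, and this gives Hausdorff convergence.

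For part (iii), Proposition~\ref{noescape} places $\pi_j\lfloor G_R$ in a fixed compact set. Its mass is $\mu_j(D_R(\tilde u_j,0))\to R^n$ by \eqref{Fflat}, so a diagonal subsequence converges on each $G_R$ to a locally finite $\pi$. The support condition comes from the closedness of the graph of $\partial\tilde u_\infty$ under locally uniform convergence. The main obstacle is showing $\pi\{x\cdot y=R^2\}=0$ and $\pi(G_R)=R^n$, which requires ruling out concentration on the level surface. My plan is to use \eqref{Fflat} at radii $R\pm\epsilon$: the $\pi_j$-mass of $\{(R-\epsilon)^2\le x\cdot y\le(R+\epsilon)^2\}$ equals $(R+\epsilon)^n-(R-\epsilon)^n+o(1)$. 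Passing to the limit on slightly enlarged closed and open sets via the portmanteau theorem bounds $\pi$ of the surface by $O(\epsilon)$, which yields both claims.
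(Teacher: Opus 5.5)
Your first step has a genuine gap. Lemma~\ref{fixratio} with $\theta=h^{-1}$ gives, for $h\ge1$, only $n^{-3}K_j\subset S_h^c[\tilde u_j](0)\subset hn^3K_j$. The outer radius is $O(h)$, but the guaranteed inner radius is $n^{-3}$ for every $h$. Combined with Lemma~\ref{lemslope}, this bounds $\tilde u_j$ only on $K_j\supset B_1$ (where $\tilde u_j<1+|x|$), not on $B_R$ for large $R$. For that you need $B_R\subset S^c_{H_R}[\tilde u_j](0)$ with $H_R$ independent of $j$, i.e.\ uniform growth of the inner radius as $h\to\infty$.

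Convexity plus the height-one normalization cannot provide this. The function $\phi_M(x)=\max\{0,M|x|-M+1\}$ satisfies $\phi_M\ge0=\phi_M(0)$, $S_1^c[\phi_M](0)=B_1$, and Lemma~\ref{fixratio}, yet $\phi_M(2e)=M+1\to\infty$.

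The missing ingredient is the transport structure. The Monge--Amp\`ere measure $\frac{\rho_j}{\rho_j^*(D\tilde u_j)}\mathbf 1_{\Omega_j}\,\dd x$ has an affine doubling constant independent of $j$ (this is where \eqref{densityratio} and the convexity of $\Omega_j$ enter), and its support $\overline{\Omega_j}$ contains $0$. Caffarelli's geometric decay of centered sections then yields a uniform $s\in(0,1)$ with $S^c_{sh}[\tilde u_j](0)\subset\frac12S^c_h[\tilde u_j](0)$. Hence $B_R\subset RK_j\subset S^c_{H_R}[\tilde u_j](0)$ and $0\le\tilde u_j\le2H_R$ on $B_R$, which is how the paper obtains \eqref{Ujbound}. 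Everything downstream rests on this: finiteness of $\tilde u_\infty$ on $\mathbb R^n$, the dual convergence, and the support property of $\pi$.

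A secondary point: your claim that $\tilde u_j$ is bounded below by a function growing at infinity is false precisely in the boundary case. If $y_0\in\partial\Omega^*$, then $0\in\partial\Omega_j^*$. Since $\tilde u_j=(\tilde v_j+I_{\overline{\Omega_j^*}})^*$ with $\tilde v_j\ge0=\tilde v_j(0)$, $\tilde u_j$ vanishes on the normal cone of $\Omega_j^*$ at $0$. The uniform outer radius only controls $\tilde u_j-p_h\cdot x$.

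You do not need the claim. Locally uniform convergence gives epi-convergence of $\tilde u_j$, Wijsman's theorem gives epi-convergence of the conjugates, and for convex functions this upgrades to uniform convergence on compact subsets of $\operatorname{int}\operatorname{dom}\tilde v_\infty$. Alternatively, as in the paper, the coercivity $\tilde u_\infty(x)-y\cdot x\ge\delta|x|-M$ for $y$ near $Q$ localizes the suprema defining $\tilde u_j^*$ to a fixed ball. Also, epi-convergence gives $\limsup$ bounds only along recovery sequences, not at the polytope vertices themselves, although convexity repairs this.

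Your arguments for $\Omega_\infty^*\ne\varnothing$ and for part (ii) agree with the paper. So does the portmanteau argument for $\pi\{x\cdot y=R^2\}=0$ and $\pi(G_R)=R^n$.
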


\begin{proof}
For \(h>0\), set \(K_{j,h}=S_h^c[\tilde u_j](0)\), and let \(p_{j,h}\) be the slope of its centering plane. Thus \(K_{j,1}=K_j\) and \(p_{j,1}=p_j\) in the notation of \eqref{john}. By \eqref{datanormalized}, the Alexandrov measure of \(\tilde u_j\) satisfies
\[
 \dd M_{\tilde u_j}
 =\frac{\rho_j}{\rho_j^*(D\tilde u_j)}\mathbf1_{\Omega_j}\,\dd x.
\]
Thus \(\spt M_{\tilde u_j}=\overline{\Omega_j}\), and \eqref{densityratio} gives a common affine doubling bound. Since \(0\in\overline{\Omega_j}\) by \eqref{john}, the geometric decay of centered sections  \cite[Lemma 2.2]{Caffarelli1996Boundary} yields \(s\in(0,1)\), independent of \(j\), such that
\[
 K_{j,sh}\subset\tfrac12K_{j,h}
 \qquad(h>0).
\]
Consequently, for every \(R>1\), there is \(H_R<\infty\), independent of \(j\), such that
\begin{equation}\label{geoexpansion}
 R K_j\subset K_{j,H_R}.
\end{equation}

Since \(B_1\subset K_j\) by \eqref{john}, \eqref{geoexpansion} gives \(B_R\subset K_{j,H_R}\). Convexity and \eqref{john} imply \(\tilde u_j\ge0\), while Lemma~\ref{lemslope} yields
\[
 |p_{j,H_R}|\le\frac{H_R}{R}.
\]
Hence
\begin{equation}\label{Ujbound}
 0\le\tilde u_j(x)
 \le H_R+p_{j,H_R}\cdot x
 \le2H_R
 \qquad(x\in B_R).
\end{equation}
Convexity and \eqref{Ujbound}, applied on a slightly larger ball, yield uniform local Lipschitz bounds. Arzel\`a--Ascoli and a diagonal argument prove \eqref{Ujlimit}. Since \(\tilde u_j\ge0\) and \(\tilde u_j(0)=0\), we have \(\tilde u_\infty\ge0\) and \(\tilde u_\infty(0)=0\). Therefore
\[
 \tilde v_\infty(0)=-\inf_{\mathbb R^n}\tilde u_\infty=0,
 \qquad
 \tilde v_\infty\ge0.
\]
The first identity in \eqref{dualidentification} follows from \cite[Theorem~7.5]{Rockafellar1970}. The second follows from the definition of \(\tilde v_\infty\) and the biconjugation theorem \cite[Theorem~12.2]{Rockafellar1970}.

To see that \(\Omega_\infty^*\ne\varnothing\), after passing to a further subsequence, assume that \(p_j\to p\). Taking \(h=1\) in \eqref{gradnormalized}, we obtain
\[
 p_j+B_c\subset\partial\tilde u_j(K_j).
\]
Fix \(y\in p+B_{c/2}\). For all large \(j\), there is \(x_j\in K_j\)
such that \(y\in\partial\tilde u_j(x_j)\). Since \(K_j\subset B_{n^{3/2}}\) and \(\tilde u_j\ge0\),
\[
 \tilde u_j^*(y)
 =x_j\cdot y-\tilde u_j(x_j)\le C.
\]
Hence, for every \(z\in\mathbb R^n\),
\[
 z\cdot y-\tilde u_j(z)\le C.
\]
Letting \(j\to\infty\) and then taking the supremum over \(z\), we obtain
\[
 \tilde v_\infty(y)=\tilde u_\infty^*(y)\le C.
\]
Thus
\[
 p+B_{c/2}\subset\operatorname{dom}\tilde v_\infty,
\]
and therefore \(\Omega_\infty^*\ne\varnothing\).

Fix \(Q\Subset \Omega_\infty^*\), and choose \(\delta>0\) such that
\[
 \overline Q+2\delta\overline B_1\Subset \Omega_\infty^*.
\]
Since \(\tilde v_\infty\) is continuous in \(\Omega_\infty^*\),
\[
 M:=\sup_{\overline Q+2\delta\overline B_1}\tilde v_\infty<\infty.
\]
For \(y\in\overline Q+\delta\overline B_1\), \(x\ne0\), and \(z=y+\delta x/|x|\), Fenchel's inequality implies
\begin{equation}\label{U-yx}
 \tilde u_\infty(x)-y\cdot x
 \ge\delta|x|-M.
\end{equation}
Choose \(R\) so large that \(\delta R-M\ge2\). By \eqref{Ujlimit} and \eqref{U-yx}, for all large \(j\),
\[
 \tilde u_j(Re)-y\cdot Re\ge1
\]
for every \(e\in\mathbb S^{n-1}\) and \(y\in\overline Q+\delta\overline B_1\). Convexity and \(\tilde u_j(0)=0\) then imply
\[
 \tilde u_j(te)-y\cdot te\ge\frac tR
 \qquad(t\ge R).
\]
Thus the suprema defining \(\tilde u_j^*\) and \(\tilde v_\infty\) on \(\overline Q+\delta\overline B_1\) are attained in one fixed ball. It follows that
\[
 \sup_{\overline Q+\delta\overline B_1}
 |\tilde u_j^*-\tilde v_\infty|
 \le
 \sup_{\overline B_R}|\tilde u_j-\tilde u_\infty|
 \longrightarrow0.
\]
In particular,
\[
 \overline Q+\delta\overline B_1
 \subset\operatorname{dom}\tilde u_j^*=\overline{\Omega_j^*}.
\]
Since \(\Omega_j^*\) is open and convex, \(Q\subset\Omega_j^*\), which proves \eqref{duallimit}.

For each fixed \(h>0\), \eqref{john} and Lemma~\ref{fixratio} give
\begin{equation}\label{Kjhshape}
 B_{c_h}\subset K_{j,h}\subset B_{C_h},
\end{equation}
with constants independent of \(j\). Hence Lemma~\ref{lemslope} bounds the centering slopes, and \eqref{ScUjlimit} follows directly from \eqref{Ujlimit}, preservation of the barycenter under Hausdorff convergence, and uniqueness of the centering plane.

We now prove \eqref{limitplans} and \eqref{piproperties}. For every \(R>0\), by \eqref{Fflat}, we have
\[
 \pi_j(G_R)
 =\mu_j\bigl(D_R(\tilde u_j,0)\bigr)
 \longrightarrow R^n.
\]
Moreover, by Proposition~\ref{noescape}, there is a compact set \(C_R\subset\mathbb R^n\times\mathbb R^n\), independent of \(j\), such that
\[
 \spt(\pi_j\lfloor G_R)\subset C_R.
\]
Thus the measures \(\pi_j\) have uniformly bounded mass on every compact subset of \(\mathbb R^n\times\mathbb R^n\). After passing to a diagonal subsequence, there is a locally finite measure \(\pi\) such that
\[
 \int\varphi\,\dd\pi_j\longrightarrow\int\varphi\,\dd\pi
 \qquad
 \text{for every }\varphi\in C_c(\mathbb R^n\times\mathbb R^n).
\]

We first prove the support assertion in \eqref{piproperties}. Suppose that
\[
 (x_j,y_j)\longrightarrow(x,y),
 \qquad
 y_j\in\partial\tilde u_j(x_j).
\]
For every \(z\in\mathbb R^n\),
\[
 \tilde u_j(z)
 \ge\tilde u_j(x_j)+y_j\cdot(z-x_j).
\]
Passing to the limit using \eqref{Ujlimit}, we obtain
\[
 \tilde u_\infty(z)
 \ge\tilde u_\infty(x)+y\cdot(z-x),
\]
and hence \(y\in\partial\tilde u_\infty(x)\). Therefore
\[
 \spt\pi\subset
 \{(x,y):y\in\partial\tilde u_\infty(x)\}.
\]
Since \(\tilde v_\infty=\tilde u_\infty^*\), by Legendre duality, we have
\[
 y\in\partial\tilde u_\infty(x)
 \quad\Longleftrightarrow\quad
 x\in\partial\tilde v_\infty(y).
\]
This proves the first assertion in \eqref{piproperties}.

It remains to identify the mass at every radius. At finite \(j\), each positive level of \((x,y)\mapsto x\cdot y\) has zero \(\pi_j\)-mass by the radial argument used in the proof of \eqref{convergenceapproximation}. Let \(0<R<k\) and \(0<\varepsilon<\min\{R,k-R\}\). Weak convergence of the restrictions at radius \(k\) yields that
\begin{align*}
 \pi\{x\cdot y<R^2\}
 &\ge(R-\varepsilon)^n,\\
 \pi\{x\cdot y\le R^2\}
 &\le(R+\varepsilon)^n.
\end{align*}
Letting \(\varepsilon\rightarrow 0\) proves
\begin{equation}\label{windowcost}
 \pi(G_R)=R^n,
 \qquad
 \pi\{x\cdot y=R^2\}=0.
\end{equation}
The restrictions obtained from different integer radii agree on their common continuity sets. They therefore define one locally finite plan \(\pi\), and \eqref{limitplans}--\eqref{piproperties} follow.
\end{proof}

The next lemma gives a direct geometric description of the density created by a lower-dimensional collapse. In the following, \(\operatorname{aff}(E)\) denotes the affine hull of \(E\), i.e., the smallest affine subspace containing \(E\).
\begin{lemma}\label{collapsedensity}
Let $E_j\subset B_R\subset\mathbb R^n$ be nonempty open convex sets and let $c_j>0$. Suppose that
\[
 \eta_j:=c_j\mathbf1_{E_j}\,\dd x\rightharpoonup\eta,
 \qquad
 0<\liminf_j\eta_j(E_j)\le
 \limsup_j\eta_j(E_j)<\infty,
\]
and that
\[
 \overline E_j\longrightarrow E=\spt\eta
 \qquad\text{in Hausdorff distance}.
\]
Set
\[
 L=\operatorname{aff}E,\qquad m=\dim L,
 \qquad \Omega'=\operatorname{relint}E.
\]
If $m=n$, then
\[
 \eta=c\mathbf1_{\Omega'}\,\dd x
\]
for some constant $c>0$. If $0<m<n$ and $q=n-m$, then
\begin{equation}\label{collapsedensityformula}
 \eta=\rho\mathbf1_{\Omega'}\,\dd\mathcal H^m\lfloor L,
 \qquad
 \rho^{1/q}\ \text{is positive and concave on }\Omega'.
\end{equation}
If $m=0$, then $\eta$ is a positive multiple of a Dirac mass.
\end{lemma}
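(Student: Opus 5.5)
We handle the three cases by testing $\eta$ against functions of the projection onto $L$ and identifying the limit density with limits of normalized fibre volumes. Translate so that $0\in\Omega'$ and $L$ is a linear subspace, and write $P=P_L$. Since $\overline E_j\to E\subset L$ in Hausdorff distance, $\sup_{x\in E_j}|P_{L^\perp}x|=:\delta_j\to0$.

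\emph{Case $m=n$.} Here $E$ has nonempty interior and $|E_j\triangle E|\to0$, because Hausdorff convergence of convex bodies with nonempty interior implies $L^1$ convergence of their indicators. The mass bounds then force $c_j$ to be bounded above and below. Along any subsequence with $c_j\to c$ we get $\eta=c\mathbf 1_{\Omega'}\dd x$, and $c$ is determined by $\eta$.

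\emph{Case $0<m<n$.} For $z\in L$, let $\phi_j(z)=\mathcal H^q\bigl(E_j\cap(z+L^\perp)\bigr)$. By Brunn--Minkowski applied in $L^\perp$, $\phi_j^{1/q}$ is concave on $P(E_j)$, and $P_\#\eta_j=c_j\phi_j\,\dd\mathcal H^m\lfloor L$. Since $\eta_j\to\eta$ and $\eta$ lives on $L$, testing with $\varphi\circ P$ (where $\varphi\in C_c(L)$ is extended by a cutoff in the normal direction) gives $P_\#\eta_j\rightharpoonup\eta$. So it suffices to show that the concave functions $\psi_j:=(c_j\phi_j)^{1/q}$ on $P(E_j)$ converge locally uniformly on $\Omega'$ to a positive concave $\psi$. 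Then $\eta=\psi^q\mathbf 1_{\Omega'}\,\dd\mathcal H^m$, using that $\partial\Omega'$ is $\mathcal H^m$-null and $P(\overline E_j)\to E$.

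The key step is a uniform two-sided bound for $\psi_j$ on compact subsets $Q\Subset\Omega'$.

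\emph{Upper bound.} A concave nonnegative function on a convex set $P(E_j)\supset Q'$, where $Q'$ is slightly larger than $Q$ for large $j$, is bounded on $Q$ by a constant times its average over $Q'$. That average is at most $\eta_j(E_j)/\mathcal H^m(Q')\le C$.

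\emph{Lower bound.} If $\psi_j(z_j)\to0$ at some $z_j\in Q$, concavity plus the upper bound and the vanishing of $\psi_j$ only at the relative boundary force $\psi_j$ to be small on a large portion. More precisely, concavity gives $\psi_j\le \psi_j(z_j)+\ldots$ only one-sidedly, so I would instead argue via mass. Suppose $\psi_j$ is small at $z_j$. Concavity implies that on a half-space through $z_j$ (a supporting hyperplane of the superlevel set $\{\psi_j\ge\psi_j(z_j)\}$), $\psi_j\le\psi_j(z_j)$. That half-space cuts off from $P(E_j)$ a region containing a fixed ball near $\partial\Omega'$ of definite size, since $z_j\in Q$ lies uniformly inside. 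This yields a nonempty open subset $U$ of $\Omega'$ with $\eta(U)=0$, contradicting $E=\spt\eta$ (which holds by Lemma~\ref{csptcollapse}).

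With both bounds in hand, concave functions uniformly bounded on compacts are locally equi-Lipschitz, so Arzelà--Ascoli and a diagonal argument give a subsequential limit $\psi$. The limit is unique because $\psi^q\,\dd\mathcal H^m=\eta$, hence $\psi^q$ is the density of $\eta$ and the whole sequence converges. Positivity and concavity of $\rho^{1/q}=\psi$ follow.

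\emph{Case $m=0$.} Here $E$ is a single point, and weak convergence with masses bounded between two positive constants gives $\eta=\eta(\mathbb R^n)\delta_{E}$, with $\eta(\mathbb R^n)=\lim\eta_j(E_j)>0$ along the sequence (no mass escapes since $E_j\subset B_R$).

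I expect the main obstacle to be the uniform positive lower bound on $\psi_j$ in the interior, which excludes degeneration of the fibre volumes. The half-space argument combined with the support identification of Lemma~\ref{csptcollapse} is what I would try first.
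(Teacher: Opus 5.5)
Your route is the paper's: fibre volumes over $L$, Brunn--Minkowski for concavity of $\psi_j=(c_j\phi_j)^{1/q}$, weak convergence $P_\#\eta_j\rightharpoonup\eta$, and compactness of nonnegative concave functions. The cases $m=n$ and $m=0$ are fine.

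There is a gap at the sentence claiming that locally uniform convergence of $\psi_j$ on $\Omega'$ suffices, with $\eta=\psi^q\mathbf 1_{\Omega'}\,\dd\mathcal H^m$ following from $\mathcal H^m(\partial\Omega')=0$. Locally uniform convergence only identifies $\eta\lfloor\Omega'$. To rule out a singular part of $\eta$ on the relative boundary $E\setminus\Omega'$, you need $\eta\ll\mathcal H^m\lfloor L$. Equivalently, you need $\int_{P(E_j)\setminus Q}\psi_j^q\,\dd\mathcal H^m$ to be small uniformly in $j$ once $Q\Subset\Omega'$ is large. The $\mathcal H^m$-nullity of $\partial\Omega'$ gives nothing until this is known. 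Your upper bound holds only on compact subsets of $\Omega'$, so it does not supply it.

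The paper treats this as the key step and closes it by using concavity all the way up to the boundary. Fix $x_0\in\Omega'$ and $r>0$ with $B^L_r(x_0)\subset P(E_j)$ for all large $j$. For any $x\in P(E_j)$ and $y\in B^L_r(x_0)$, concavity and nonnegativity give $\psi_j(\tfrac{x+y}2)\ge\tfrac12\psi_j(x)$. These midpoints fill $B^L_{r/2}(\tfrac{x+x_0}2)\subset P(E_j)$, so
$\psi_j(x)^q\le 2^q\omega_m^{-1}(2/r)^m\eta_j(E_j)\le C$ on all of $P(E_j)$. Hence $P_\#\eta_j\le C\mathcal H^m\lfloor L$, and in the limit $\eta\le C\mathcal H^m\lfloor L$, which is what excludes boundary mass. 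This bound also supersedes your local upper bound.

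Conversely, the interior lower bound you identify as the main obstacle is unnecessary. Your half-space argument is correct, but there is a simpler path. Once you have a nonnegative concave limit $\psi$ with $\psi^q\,\dd\mathcal H^m=\eta\lfloor\Omega'$, you get $\psi\not\equiv0$ because $\eta(\Omega')>0$, as $\Omega'$ is a nonempty relatively open subset of $\spt\eta$. A nonnegative concave function on an open convex set that is positive somewhere is positive everywhere. This is how the paper obtains $\rho>0$.
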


\begin{proof}
The case $m=0$ is immediate. After a translation and a rotation, assume for the remaining cases that
\[
 L=\mathbb R^m\times\{0\},
\]
and denote the orthogonal projection onto $L$ by $P_L$.

Suppose first that $m=n$. Hausdorff convergence of bounded convex sets gives
\[
 |E_j\triangle E|\longrightarrow0,
 \qquad |E_j|\longrightarrow|E|>0.
\]
Since all the measures are supported in $B_R$, weak convergence also gives
\[
 c_j|E_j|=\eta_j(\mathbb R^n)\longrightarrow\eta(\mathbb R^n).
\]
Thus $c_j\to c>0$, and hence $ \eta=c\mathbf1_{\Omega'}\,\dd x,$ because the boundary of a full-dimensional convex set has Lebesgue measure zero.

Now let $0<m<n$ and put $q=n-m$. Set
\[
 F_{j,x}:=\{z\in L^\perp:x+z\in E_j\},
 \qquad x\in P_LE_j.
\]
By Fubini,
\begin{equation}\label{projectedsectiondensity}
 (P_L)_\#\eta_j=g_j\,\dd\mathcal H^m\lfloor L,
 \qquad
 g_j(x):=c_j\mathcal H^q(F_{j,x}).
\end{equation}
For $x_0,x_1\in P_LE_j$ and $0<t<1$, convexity gives
\[
 (1-t)F_{j,x_0}+tF_{j,x_1}
 \subset F_{j,(1-t)x_0+tx_1}.
\]
The $q$-dimensional Brunn--Minkowski inequality therefore shows that
\begin{equation}\label{sectionrootconcave}
g_j^{1/q}
 \quad\text{is nonnegative and concave on }P_LE_j.
\end{equation}

Since $E\subset L$ and $\overline E_j\to E$,
\[
 \delta_j:=\sup_{z\in E_j}|z-P_Lz|\longrightarrow0.
\]
Consequently, for every $\varphi\in C_c(\mathbb R^n)$,
\begin{align*}
 \left|\int\varphi\,\dd\eta_j
 -\int_L\varphi(x)g_j(x)\,\dd\mathcal H^m(x)\right|
 &\le \eta_j(E_j)
 \sup_{|z-z'|\le\delta_j}|\varphi(z)-\varphi(z')|\\
 &\longrightarrow0.
\end{align*}
Thus the projected measures in \eqref{projectedsectiondensity} converge weakly to $\eta$.

We next exclude concentration on an $\mathcal H^m$-null subset of $L$. Since $P_LE_j\to E$ in Hausdorff distance, there are $x_0\in\Omega'$ and $r>0$ such that $ B_r^L(x_0)\subset P_LE_j$ for all sufficiently large $j$. Fix $x\in P_LE_j$. For every $y\in B_r^L(x_0)$, concavity and nonnegativity give
\[
g_j^{1/q}\!\left(\frac{x+y}{2}\right)
 \ge\frac{g_j^{1/q}(x)+g_j^{1/q}(y)}2
 \ge\frac{g_j^{1/q}(x)}2,
\]
where $B_r^L(x_0)$ denotes a ball in $L$ with radius $r$ and center $x_0.$
By convexity
$$B_{\frac{r}{2}}^L(\frac{x+x_0}{2})=\{\frac{x+y}{2}: y\in B_r^L(x_0)\} \subset P_LE_j.$$
Hence
\[
 \eta_j(E_j)=\int_{P_L E_j}g_j\,\dd\mathcal H^m
 \ge \omega_m\left(\frac r2\right)^m
      \frac{g_j(x)}{2^q}.
\]
The uniform upper mass bound therefore yields
\begin{equation}\label{uniformsectionbound}
 \sup_{P_L E_j}g_j\le C.
\end{equation}
In particular, $ (P_L)_\#\eta_j\le C\mathcal H^m\lfloor L.$
Passing to the weak limit gives $ \eta\le C\mathcal H^m\lfloor L.$ Thus no mass can concentrate on a set of zero $\mathcal H^m$-measure.

By \eqref{uniformsectionbound}, and concavity of $g_j^{1/q}$, after passing to a subsequence,
\[
 g_j^{1/q}\longrightarrow w
 \qquad\text{locally uniformly on }\Omega',
\]
for some finite, nonnegative, concave function $w.$
 By \eqref{projectedsectiondensity} and the weak convergence just proved,
\[
 \eta\lfloor\Omega'=w^q\,\dd\mathcal H^m\lfloor\Omega'.
\]
Since \(\eta\le C\mathcal H^m\lfloor L\), \(\spt\eta=E\), and the relative boundary of a convex set has zero \(\mathcal H^m\)-measure, it follows that
\[
 \eta=w^q\mathbf1_{\Omega'}\,\dd\mathcal H^m\lfloor L.
\]
The function $w$ is not identically zero. By concavity and nonnegativity we have $w>0$ on $\Omega'$. Taking $\rho=w^q$ proves \eqref{collapsedensityformula}.
\end{proof}

The next lemma identifies the source marginal and proves local convergence of the source supports to its affine hull. In particular, the zero-dimensional alternative does not occur.

\begin{lemma}
\label{gmarconc}
Let \(\mu\) be the first marginal of the limiting transport plan $\pi$, which is obtained in Lemma~\ref{diacompac}. Then \(\mu\) is nonzero and locally finite, \(0\in\spt\mu\), and \(\spt\mu\) is convex. If
\[
 L=\operatorname{aff}(\spt\mu),
 \qquad
 m=\dim L,
\]
then \(L\) is a linear subspace and \(1\le m\le n\). Moreover, for every fixed \(R<\infty\),
\begin{equation}\label{noescapeDR}
 \sup_{x\in D_R(\tilde u_j,0)}\operatorname{dist}(x,L)
 \longrightarrow0.
\end{equation}
The source marginal has one of the following forms.
\begin{enumerate}[label=\textup{(\alph*)}]
\item If \(m=n\), then
\[
 \mu=\rho_\infty\mathbf1_{\Omega_\infty}\,\dd x
\]
for a constant \(\rho_\infty>0\) and the open convex set
\(\Omega_\infty=\operatorname{int}(\spt\mu)\).

\item If \(1\le m<n\), then, with \(q=n-m\),
\begin{equation}\label{lowerdimensionalsource}
 \mu=\rho(x)\mathbf1_{\Omega_\infty}\,\dd\mathcal H^m\lfloor L,
 \qquad
 \rho^{1/q}\ \text{is positive and concave on }\Omega_\infty,
\end{equation}
where \(\Omega_\infty=\operatorname{relint}(\spt\mu)\).
\end{enumerate}
\end{lemma}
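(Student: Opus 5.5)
The plan is to reduce everything to Lemmas~\ref{csptcollapse} and \ref{collapsedensity}, applied to the truncated source measures supplied by Lemma~\ref{truncation11}.

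\emph{Reduction to a truncated source.} Fix \(R\) and take \(H=H(R)\) and the uniformly bounded convex sets \(\Omega_{j,H}=\Omega_j\cap\{\tilde u_j<H\}\) from that lemma. Set
\[
 E_j=\Omega_{j,H},\qquad c_j=\rho_j(0)\quad(\text{or }\inf_{E_j}\rho_j).
\]
By \eqref{relativeosc}, applied with radius \(R_H\), together with \(E_j\subset D_{R_H}(\tilde u_j,0)\), we get \(\sup_{E_j}|\rho_j/c_j-1|\to0\). The mass \(\mu_j(E_j)\) lies between \(\mu_j(D_R)\to R^n\) and \(\mu_j(D_{R_H})\to R_H^n\), by \eqref{Fflat}, so the mass bounds in \eqref{measurelimit} hold. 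Pass to a subsequence with \(\overline{E_j}\to E^{(H)}\) in Hausdorff distance and \(\mu_j\lfloor E_j\rightharpoonup\zeta_H\). Lemma~\ref{csptcollapse} then identifies \(E^{(H)}=\spt\zeta_H\), which is convex and compact. Lemma~\ref{collapsedensity} gives the structure of \(\zeta_H\): it is a constant times Lebesgue measure, or \(\rho\,\mathcal H^m\) with \(\rho^{1/q}\) concave, or a Dirac mass.

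\emph{Relating \(\zeta_H\) to \(\mu\).} Since \(\pi_j\) lives on the graph of \(D\tilde u_j\), the source marginal of \(\pi_j\lfloor G_R\) is \(\mu_j\lfloor D_R(\tilde u_j,0)\). These marginals converge to the first marginal of \(\pi\lfloor G_R\); mass cannot escape because of the compact supports in Proposition~\ref{noescape}. Moreover \(D_R\subset E_j\) is separated by \(d_H\) from the cut \(\Omega_j\cap\{\tilde u_j=H\}\). Together with \(\pi\{x\cdot y=R^2\}=0\), this should show that \(\mu\) and \(\zeta_H\) agree on the region corresponding to \(\{x\cdot y<R^2\}\), and that \(\mu\lfloor\{\text{that region}\}\le\zeta_H\).

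The cleanest route is to work with the full measures \(\mu_j\lfloor\Omega_j\) locally. By the local uniform convergence of \(\tilde u_j\), the sets \(\Omega_j\cap\{\tilde u_j<H\}\) exhaust. Hence \(\mu\) is the weak limit of \(\mu_j\) on any ball, because \(\{\tilde u_\infty<H\}\) grows as \(H\to\infty\) (recall \(\tilde u_\infty\) is finite and the sets \(\{\tilde u_j<H\}\) contain balls \(B_{R'}\) for \(H\) large, by \eqref{Ujbound}). Consequently \(\mu\lfloor B_{R'}=\zeta_H\lfloor B_{R'}\) up to boundary effects, \(\spt\mu=\bigcup_H E^{(H)}\) is an increasing union of convex sets and hence convex, and the affine hulls \(\operatorname{aff}E^{(H)}\) stabilize to some \(L\).

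Several facts then follow directly. Nonzero and locally finite come from \(\pi(G_R)=R^n\) and the compact supports. \(0\in\spt\mu\) holds because \(0\in\overline{\Omega_j}\) and, by the convexity argument in Lemma~\ref{csptcollapse}, every point of \(E^{(H)}\) carries mass. \(L\) is linear since it contains \(0\). The bound \(m\ge1\) holds because a Dirac mass at a single point \(x_*\) would force \(\mu(D_R)=\mu\{x_*\}\) to be independent of \(R\), contradicting \(\pi(G_R)=R^n\): indeed the support of \(\pi\) restricted over \(x_*\) lies in \(\{x_*\}\times\partial\tilde u_\infty(x_*)\), and with \(x_*=0\) the cost vanishes, so \(\pi(G_R)\) would be constant in \(R\). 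Finally, \eqref{noescapeDR} follows from \(D_R(\tilde u_j,0)\subset E_j\) and \(\overline E_j\to E^{(H)}\subset L\) in Hausdorff distance.

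\emph{Identifying the density.} For the form of \(\mu\), apply Lemma~\ref{collapsedensity} to each \(\zeta_H\); note \(\zeta_H\) is a limit of \(c_j\mathbf 1_{E_j}\) by Lemma~\ref{csptcollapse}. Case (a) gives a constant density on \(\operatorname{int}E^{(H)}\). These constants must agree for different \(H\) because the measures agree on overlaps, giving a single \(\rho_\infty\) on \(\Omega_\infty=\bigcup_H\operatorname{int}E^{(H)}=\operatorname{int}\spt\mu\). Case (b) gives densities \(\rho_H\) with \(\rho_H^{1/q}\) concave, consistent on overlaps; concavity is a local property on an increasing union, so \(\rho^{1/q}\) is concave and positive on \(\operatorname{relint}\spt\mu\).

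The main obstacle is the middle step: showing that the restriction of \(\mu\) to bounded regions genuinely coincides with the limit of the truncated densities \(c_j\mathbf1_{E_j}\). This requires controlling the boundary effect at \(\{\tilde u_j=H\}\), which I would handle using the separation \(d_H\) from Lemma~\ref{truncation11} together with the exhaustion \(B_{R'}\subset\{\tilde u_j<H\}\).
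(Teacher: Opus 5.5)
Your proposal is correct. Its skeleton is the paper's: truncate by \(\Omega_{j,H}=\Omega_j\cap\{\tilde u_j<H\}\), use \eqref{relativeosc} to replace \(\mu_j\lfloor\Omega_{j,H}\) by \(c_j\mathbf 1_{\Omega_{j,H}}\,\dd x\), apply Lemmas~\ref{csptcollapse} and \ref{collapsedensity} to each truncation, and exhaust. The difference is how you tie the truncated limits \(\zeta_H\) to \(\mu\).

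The paper never uses convergence of the unrestricted \(\mu_j\). It sandwiches \(\mu_{R_k}\le\zeta_k\le\mu_{\widehat R_k}\), where \(\mu_R\) is the first marginal of \(\pi\lfloor G_R\), and deduces \(\mu=\sup_k\zeta_k\). It therefore has to pass to monotone limits of the densities (\(a_k\uparrow\rho_\infty\), \(\rho_k^{1/q}\uparrow w\)), prove separately that \(w<\infty\), and obtain local finiteness through doubling.

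You instead use \(\mu_j\rightharpoonup\mu\) together with \(B_{R'}\cap\Omega_j\subset\Omega_{j,H}\) for every \(j\), valid once \(H\) exceeds the bound \(2H_{R'}\) of \eqref{Ujbound}. This gives \(\mu=\zeta_H\) exactly on the open ball \(B_{R'}\). Support, affine hull and density then stabilize locally. Concavity of \(\rho^{1/q}\) can be checked on segments that lie inside a single truncation, and finiteness comes for free. Your limit-level proof of \(m\ge1\) is also shorter than the paper's finite-\(j\) argument: \(\mu=c\,\delta_0\) would force \(\pi(G_R)=\pi(\mathbb R^{2n})\) for every \(R>0\), contradicting \(\pi(G_R)=R^n\). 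The paper's route stays inside the cost windows \(G_R\) in which \(\pi\) was built; yours is more local and removes the monotone-limit step.

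Two justifications need tightening.

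\emph{Vague convergence.} \(\mu_j\rightharpoonup\mu\) does not follow from the exhaustion by sublevel sets. It follows from the uniform bound \(\sup_{B_{R'}}|D\tilde u_j|\le C_{R'}\), which comes from convexity plus \eqref{Ujbound}. This bound confines \(\pi_j\lfloor(B_{R'}\times\mathbb R^n)\) to a fixed compact set, so taking first marginals commutes with the vague limit there.

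\emph{Local finiteness.} This needs the inclusion opposite to the one in Proposition~\ref{noescape}. Since \(\spt\pi\subset\{(x,y):y\in\partial\tilde u_\infty(x)\}\) and \(\partial\tilde u_\infty\) is locally bounded, one gets \(\mu(B_M)\le\pi(G_R)=R^n\) for \(R\) large.

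Once the ball argument is in place, the ``boundary effect at \(\{\tilde u_j=H\}\)'' that you list as the main obstacle does not arise, and the separation \(d_H\) is unnecessary. Just take \(R\) in Lemma~\ref{truncation11} large enough that \(H(R)=R^2+1\) exceeds \(2H_{R'}\).
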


\begin{proof}
We first prove local finiteness. By \eqref{john}, Lemma~\ref{fixratio}, and \eqref{include-S-Sc}, there is a dimensional constant \(r_*>0\) such that
\[
 \Omega_j\cap B_{r_*}\subset S_1[\tilde u_j](0).
\]
Together with \eqref{include-D-S}, this yields
\[
 \tfrac12(\Omega_j\cap B_{r_*})\subset D_1(\tilde u_j,0).
\]
Convexity, \(0\in\overline{\Omega_j}\), and \eqref{densityratio} imply
\[
 \mu_j(B_{r_*})
 \le2^nD_0\,
 \mu_j\bigl(\tfrac12(\Omega_j\cap B_{r_*})\bigr)
 \le2^nD_0.
\]
The common doubling bound then shows that
\[
 \sup_j\mu_j(B_M)<\infty
 \qquad(M<\infty).
\]
Combining this with \eqref{limitplans}, we conclude that \(\mu\) is locally finite.

Choose \(1\le R_1<R_2<\cdots\to\infty\). By Lemma~\ref{truncation11}, the heights may be chosen increasingly so that
\begin{equation}\label{nestedwindow}
 D_{R_k}(\tilde u_j,0)
 \subset\Omega_{j,H_k}
 \subset D_{\widehat R_k}(\tilde u_j,0),
\end{equation}
where \(\widehat R_k<\infty\) is independent of \(j\), and the sets \(\Omega_{j,H_k}\) are nested in \(k\). Set
\begin{equation}\label{fjapprox}
 c_{k,j}:=\inf_{\Omega_{j,H_k}}\rho_j>0,
 \qquad
 \varepsilon_{k,j}:=
 \sup_{\Omega_{j,H_k}}
 \left(\frac{\rho_j}{c_{k,j}}-1\right)
 =\operatorname{osc}^{\rm rel}_{\Omega_{j,H_k}}\rho_j
 \longrightarrow0
 \qquad(j\to\infty).
\end{equation}
Here the convergence follows from
\(\Omega_{j,H_k}\subset D_{\widehat R_k}(\tilde u_j,0)\) and \eqref{relativeosc}. Define
\begin{equation}\label{etajR}
 \eta_{k,j}=c_{k,j}\mathbf1_{\Omega_{j,H_k}}\,\dd x,
 \qquad
 \zeta_{k,j}=\mu_j\lfloor\Omega_{j,H_k}.
\end{equation}

The two inclusions in \eqref{nestedwindow}, together with \eqref{massone} and \eqref{Fflat}, give uniform positive lower and finite upper bounds for \(\zeta_{k,j}(\mathbb R^n)\). Since
\[
 \eta_{k,j}\le\zeta_{k,j}
 \le(1+\varepsilon_{k,j})\eta_{k,j},
\]
the same mass bounds hold for \(\eta_{k,j}\), and
\[
 \|\eta_{k,j}-\zeta_{k,j}\|_{\mathrm{TV}}
 \le\varepsilon_{k,j}\eta_{k,j}(\mathbb R^n)
 \longrightarrow0.
\]
After a diagonal extraction,
\[
 \eta_{k,j}\rightharpoonup\zeta_k,
 \qquad
 \zeta_{k,j}\rightharpoonup\zeta_k
\]
for every \(k\). Since the truncations are nested, we have
\begin{equation}\label{nestedmeasures}
 \zeta_k\le\zeta_\ell\qquad(k\le\ell).
\end{equation}
By Lemma~\ref{csptcollapse},
\begin{equation}\label{truncationsupportlimit}
 \overline{\Omega_{j,H_k}}
 \longrightarrow S_k:=\spt\zeta_k
 \qquad\text{in Hausdorff distance}
\end{equation}
for every \(k\). Thus each \(S_k\) is convex. Moreover, \(0\in S_k\), because \(0\in\overline{\Omega_{j,H_k}}\), and
\eqref{nestedmeasures} gives \(S_k\subset S_\ell\) for \(k\le\ell\).

Let \(\mu_R\) be the first marginal of \(\pi\lfloor G_R\). Taking weak limits in the measure inclusions induced by \eqref{nestedwindow}, and using Lemma~\ref{diacompac} for the two outer terms, we obtain
\begin{equation}\label{marginalsandwich}
 \mu_{R_k}\le\zeta_k\le\mu_{\widehat R_k}.
\end{equation}
Since \(\mu_R\uparrow\mu\) as \(R\to\infty\), it follows that
\begin{equation}\label{musupzeta}
 \mu=\sup_k\zeta_k.
\end{equation}
Consequently,
\[
 \spt\mu=\overline{\bigcup_k S_k}.
\]
Hence \(\spt\mu\) is convex, \(0\in\spt\mu\), and
\(L=\operatorname{aff}(\spt\mu)\) is a linear subspace.

For each \(k\), \eqref{nestedwindow} and \eqref{truncationsupportlimit} give
\[
 \sup_{x\in D_{R_k}(\tilde u_j,0)}\operatorname{dist}(x,L)
 \le d_H(\overline{\Omega_{j,H_k}},S_k)
 \longrightarrow0.
\]
Since \(R_k\to\infty\) and the sets \(D_R(\tilde u_j,0)\) are nested in \(R\), this proves \eqref{noescapeDR} for every fixed \(R<\infty\).

It remains to rule out \(m=0\). Suppose that \(L=\{0\}\). Since
\[
 \spt(\pi_j\lfloor G_1)
 \subset \spt\pi_j\cap\{(x,y):x\cdot y\le1\},
\]
\eqref{noescapeDR} with \(R=2\) gives
\[
 \sup_{(x,y)\in\spt(\pi_j\lfloor G_1)}|x|
 \longrightarrow0.
\]
On the other hand, Proposition~\ref{noescape} gives
\[
 \sup_j\sup_{(x,y)\in\spt(\pi_j\lfloor G_1)}|y|<\infty.
\]
Consequently,
\[
 \sup_{(x,y)\in\spt(\pi_j\lfloor G_1)}
 |x\cdot y|
 \longrightarrow0.
\]
Thus, for every fixed \(0<r<1\),
\[
 \spt(\pi_j\lfloor G_1)\subset G_r
\]
for all sufficiently large \(j\). Since \(\pi_j(G_1)=1\), this implies
\[
 \mu_j(D_r(\tilde u_j,0))
 =\pi_j(G_r)=1,
\]
contradicting \eqref{Fflat}, because
\[
 \mu_j(D_r(\tilde u_j,0))\longrightarrow r^n<1.
\]
Therefore \(m\ge1\).

Set \(S=\spt\mu\). Since \(S=\overline{\bigcup_kS_k}\) and the sets \(S_k\) are increasing, there is \(k_0\) such that $\operatorname{aff}S_k=L$
for $k\ge k_0$. Moreover, by convexity we have that every compact subset of \(\operatorname{relint}S\) is contained in \(\operatorname{relint}S_k\) for all sufficiently large \(k\). For \(k\ge k_0\), apply Lemma~\ref{collapsedensity} to
\(E_j=\Omega_{j,H_k}\) and \(\eta_j=\eta_{k,j}\). Denote $\Omega_k=\operatorname{relint}S_k.$

If \(m=n\), set \(\Omega_\infty=\operatorname{int}S\). The lemma gives
\[
 \zeta_k=a_k\mathbf1_{\Omega_k}\,\dd x,
 \qquad a_k>0.
\]
By \eqref{nestedmeasures} we have that \(a_k\) is nondecreasing. Choosing a ball \(B\Subset\Omega_{k_0}\), we have
\[
 a_k|B|=\zeta_k(B)\le\mu(B),
 \qquad k\ge k_0,
\]
so \(a_k\uparrow\rho_\infty<\infty\). Therefore \eqref{musupzeta} gives
\[
 \mu=\rho_\infty\mathbf1_{\Omega_\infty}\,\dd x.
\]

Now assume \(1\le m<n\) and put \(q=n-m\). Lemma~\ref{collapsedensity} gives
\[
 \zeta_k=\rho_k\mathbf1_{\Omega_k}\,\dd\mathcal H^m\lfloor L,
\]
with \(\rho_k^{1/q}\) positive and concave on \(\Omega_k\). Since \(S_k\subset S_\ell\) and
\(\zeta_k\le\zeta_\ell\), the continuous representatives satisfy
\[
\rho_k^{1/q}\le \rho_\ell^{1/q}\qquad\text{on }\Omega_k,
 \qquad k\le\ell.
\]
Set \(\Omega_\infty=\operatorname{relint}S\). Then
\[
 w(x):=\lim_{k\to\infty}\rho_k^{1/q}(x),
 \qquad x\in\Omega_\infty,
\]
where the limit is taken for sufficiently large \(k\) such that \(x\in\Omega_k\). It is well defined and concave, initially with values in \((0,\infty]\). It is finite everywhere. Indeed, if \(w(x_0)=\infty\), choose a ball \(B\Subset\Omega_\infty\) and \(0<t<1\). For all sufficiently large \(k\), concavity gives
\[
 \rho_k^{1/q}((1-t)x_0+ty)\ge(1-t)\rho_k^{1/q}(x_0)
 \qquad(y\in B).
\]
Letting \(k\to\infty\) would make \(w\) infinite on the open set \((1-t)x_0+tB\). Since \(\mu=\sup_k\zeta_k\), monotone convergence would then give infinite \(\mu\)-mass on a compact subset of this set, contradicting local finiteness. Thus \(w\) is positive and finite on \(\Omega_\infty\).

Extend \(\rho_k\) by zero outside \(\Omega_k\). These densities increase almost everywhere to \(w^q\mathbf1_{\Omega_\infty}\). Hence monotone convergence in \eqref{musupzeta} yields that
\[
 \mu=w^q\mathbf1_{\Omega_\infty}\,\dd\mathcal H^m\lfloor L.
\]
Taking \(\rho=w^q\) proves \eqref{lowerdimensionalsource}.
\end{proof}

We next identify the target marginal.

\begin{lemma}
\label{fibredu}
Assume \eqref{john}--\eqref{Fflat}, and suppose that, for an \(m\)-dimensional subspace \(L\), with \(1\le m<n\), \eqref{noescapecondition} holds on \(D_R(\tilde u_j,0)\) for every fixed \(R\). Let \(\mu\) and \(\nu\) be the first and second marginals of \(\pi\), respectively, and write, as in \eqref{lowerdimensionalsource},
\[
 \mu
 =\rho\,\mathbf1_{\Omega_\infty}\,
   \mathcal H^m\lfloor L,
 \qquad
 \Omega_\infty=\operatorname{relint}(\spt\mu).
\]
After passing to a subsequence, the following statements hold.

\begin{enumerate}
\item Let \(\Omega_\infty^*\) be as in Lemma~\ref{diacompac}. Then there is a constant \(\rho_\infty^*>0\) such that
\begin{equation}\label{targetmarginal}
 \nu=\rho_\infty^*\,\mathcal L^n\lfloor\Omega_\infty^*.
\end{equation}
Moreover, the normalized target measures converge in total variation on every compact subset of \(\Omega_\infty^*\) to the measure in
\eqref{targetmarginal}.

\item Set $\Omega_L^*=P_L \Omega_\infty^*.$ After identifying \(L\) isometrically with \(\mathbb R^m\), there are dual convex potentials \(\phi\) on \(\Omega_\infty\) and \(\varphi\) on \(\Omega_L^*\) such that
\begin{equation}\label{vfiber}
 \tilde u_{\infty}(x)=\phi(x),
 \qquad\tilde v_\infty(y)=\varphi(P_Ly),
 \qquad x\in \Omega_\infty,\  y\in \Omega_\infty^*.
\end{equation}
\item  The projected target density $$ \rho_L^*(\xi)
 =\rho_\infty^*\mathcal H^q
 \bigl(\Omega_\infty^*\cap(\xi+L^\perp)\bigr)$$ is finite and positive for $\xi\in \Omega_L^*,$ and \((\rho_L^*)^{1/q}\) is concave in \(\Omega_L^*\). The minimal extensions satisfy
$$
(D\phi)_\#\bigl(\rho\,\mathcal H^m\lfloor \Omega_\infty\bigr)
 =\rho_L^*\mathbf1_{\Omega_L^*}\,\dd\xi,
 \qquad
 (D\varphi)_\#
 \bigl(\rho_L^*\mathbf1_{\Omega_L^*}\,\dd\xi\bigr)
 =\rho\,\mathcal H^m\lfloor \Omega_\infty.
$$
Moreover, for every \(r>0\),
\begin{equation}\label{reducedpush}
 \int_{\{x\in \Omega_\infty:x\cdot D\phi(x)<r^2\}}
 \rho(x)\,\dd\mathcal H^m(x)
 =
 \int_{\{\xi\in \Omega_L^*:\xi\cdot D\varphi(\xi)<r^2\}}
 \rho_L^*(\xi)\,\dd\xi
 =r^n.
\end{equation}
\end{enumerate}
\end{lemma}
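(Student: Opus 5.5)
The proof goes in three stages: first show the target side does not collapse, then show the limiting potentials depend only on the $L$-coordinates, and finally project the limit plan onto $L$ to get the reduced transport problem.

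\emph{Step 1: the target marginal is full-dimensional.} By \eqref{relativeosc}, $\rho_j^*$ is nearly a constant $c_j^*$ on each $D_R(\tilde v_j,0)$. Part (1) of the statement also concerns compact subsets of $\Omega_\infty^*$, so I work on $Q\Subset\Omega_\infty^*$.
\begin{itemize}
\item By \eqref{duallimit}, $Q\subset\Omega_j^*$ for large $j$.
\item The uniform convergence $\tilde u_j^*\to\tilde v_\infty$ on $Q$ bounds $y\cdot D\tilde v_j(y)$ there, by convexity and the local Lipschitz bounds. Hence $Q\subset D_R(\tilde v_j,0)$ for some fixed $R$, and $\rho_j^*/c_j^*\to1$ uniformly on $Q$.
\item The mass normalization \eqref{massone}, \eqref{Fflat}, together with $\pi(G_R)=R^n$, forces $c_j^*$ to stay bounded above and below. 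Take a subsequential limit $c_j^*\to\rho_\infty^*>0$.
\end{itemize}
This gives total variation convergence on $Q$ to $\rho_\infty^*\mathcal L^n$. To see that no mass of $\nu$ lies outside $\overline{\Omega_\infty^*}$, note that $y$ in the support of $\pi$ lies in $\partial\tilde u_\infty(x)\subset\operatorname{dom}\tilde v_\infty$. The boundary of this convex set is Lebesgue-null, and $\nu$ has no singular part on it because of the uniform density bounds, via the same Hausdorff/Blaschke argument as in Lemma~\ref{csptcollapse} applied to truncations $\Omega^*_{j}\cap\{\tilde v_j<H\}$. This proves \eqref{targetmarginal}.

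\emph{Step 2: splitting of the potentials.} Use \eqref{normalproduct}. On $\spt\pi$ we have $P_{L^\perp}x=0$, since $\spt\mu\subset L$.
\begin{itemize}
\item For $x\in\Omega_\infty$, $\tilde u_\infty$ restricted to $L$ defines $\phi$.
\item For $\tilde v_\infty$, take $y\in\Omega_\infty^*$. Any $x\in\partial\tilde v_\infty(y)$ comes from the plan: $\nu$ has full density on $\Omega_\infty^*$, so $\pi$-a.e.\ pair has first coordinate in $\overline{\Omega_\infty}\subset L$. By density and closedness of subdifferentials this extends to every $y$, so $D\tilde v_\infty(y)\in L$ throughout $\Omega_\infty^*$.
\item Hence $\tilde v_\infty$ is constant along $L^\perp$-directions inside the convex open set $\Omega_\infty^*$, i.e.\ $\tilde v_\infty(y)=\varphi(P_Ly)$ with $\varphi$ convex on $\Omega_L^*$.
\end{itemize}
Duality between $\phi$ and $\varphi$ follows by restricting the Legendre transform to $L$, using \eqref{dualidentification}.

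\emph{Step 3: the reduced transport problem.}
\begin{itemize}
\item Fubini as in \eqref{projectedsectiondensity} and Brunn--Minkowski give that $(P_L)_\#\nu=\rho_L^*\,d\xi$ with $(\rho_L^*)^{1/q}$ concave. Its finiteness follows from local finiteness of $\nu$ against concavity, exactly as in \eqref{uniformsectionbound}, or from the finite mass $\pi(G_R)$. Positivity holds on the relative interior.
\item Push $\pi$ forward under $(x,y)\mapsto(x,P_Ly)$. Since $D\tilde v_\infty(y)=D\varphi(P_Ly)$, the projected plan is supported on the graph of $D\varphi$, equivalently on $\partial\phi$, and has marginals $\mu$ and $\rho_L^*\,d\xi$. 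This yields both pushforward identities.
\item Finally, $x\cdot y=x\cdot P_Ly$ for $x\in L$. So $\pi(G_R)=R^n$ and $\pi\{x\cdot y=R^2\}=0$ from \eqref{piproperties} translate directly into \eqref{reducedpush}, computing the mass on either side.
\end{itemize}

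\emph{Main obstacle.} I expect the hardest point to be the claim in Step 2 that every subgradient of $\tilde v_\infty$ on $\Omega_\infty^*$ lies in $L$, which requires knowing that $\nu$ charges all of $\Omega_\infty^*$. I would handle this first, using the total variation convergence of Step 1 together with the fact that $D\tilde v_j$ maps $D_R(\tilde v_j,0)$ into $D_R(\tilde u_j,0)$. Since $D_R(\tilde u_j,0)$ shrinks to $L$ by \eqref{noescapeDR}, any limit of $D\tilde v_j(y_j)$ lies in $L$.
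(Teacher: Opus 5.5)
Your proposal is correct and follows essentially the paper's route. The shared steps are: bounds on the normalizing constant with local total-variation convergence on compact subsets of $\Omega_\infty^*$; absolute continuity of $\nu$ from the uniform density bound, which removes the boundary of $\operatorname{dom}\tilde v_\infty$; $D\tilde v_\infty\in L$ almost everywhere, giving $\tilde v_\infty=\varphi\circ P_L$; Brunn--Minkowski on the $L^\perp$-fibres; and projection of $\pi$ under $(x,y)\mapsto(x,P_Ly)$ to transfer $\pi(G_r)=r^n$.

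One caution concerns the finiteness of $\rho_L^*$. It does not follow from local finiteness of $\nu$, because the fibres of $\Omega_\infty^*$ may a priori be unbounded, unlike the sets in \eqref{uniformsectionbound}. You need your second option, made precise as in the paper. By the factorization, $y\cdot D\tilde v_\infty(y)=P_Ly\cdot D\varphi(P_Ly)$ is bounded on the whole cylinder $\Omega_\infty^*\cap P_L^{-1}(Q)$ for $Q\Subset\Omega_L^*$. That cylinder therefore has $\nu$-mass at most $\pi(G_R)=R^n$, and \eqref{fiberincl} then upgrades almost-everywhere finiteness to finiteness everywhere.
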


\begin{proof}
Set
\begin{equation}\label{bj}
 b_j:=\inf_{D_1(\tilde v_j,0)}\rho_j^*.
\end{equation}
We first prove uniform bounds for \(b_j\). With \(K_j\) and \(p_j\) as in \eqref{john}, \eqref{gradpolar} and \eqref{john} give
\begin{equation}\label{dualKj}
 p_j+B_{c_0}\subset\partial  \tilde u_j(K_j)\subset\overline{\Omega_j^*},
 \qquad |p_j|\le1,
\end{equation}
for some constant $c_0$ depending only on $n.$ Hence, 
\begin{equation}\label{shrinkOmega}
 p_j+B_{c_0/2}\subset\Omega_j^*.
\end{equation}
For every $y$ in this ball, the corresponding point \(x=D\tilde v_j(y)\) belongs to $K_j$. The bounds on $p_j$, $K_j$, and the ball give $|x|+|y|\le C$, and therefore $x\cdot y<R_0^2$ for a fixed $R_0\ge1$. Thus
\begin{equation}\label{shrinkDR}
 p_j+B_{c_0/2}\subset D_{R_0}(\tilde v_j,0).
\end{equation}
 Consequently, by \eqref{relativeosc} we have
\begin{equation}\label{bjapprox}
 \sup_{D_{R_0}(\tilde v_j,0)}\rho_j^*
   =(1+o(1))b_j.
\end{equation}
The ball in \eqref{shrinkDR} and $\nu_j(D_{R_0}(\tilde v_j, 0))=\mu_j(D_{R_0}(\tilde u_j, 0))\to R_0^n$ (follows by \eqref{Fflat}) give $b_j\le C$. Conversely, Proposition~\ref{noescape} implies that $D_1(\tilde v_j,0)$ is contained in a fixed Euclidean ball independent of $j$, and
\[
 1=\nu_j(D_1(\tilde v_j,0))
 \le (1+o(1))b_j|D_1(\tilde v_j,0)|\le Cb_j,
\]
where the first equality follows from the assumption \eqref{massone}. This proves $0<c\le b_j\le C$. Pass to a subsequence with $b_j\to \rho^*_{\infty}>0,$ for some constant $ \rho^*_{\infty}.$

We next identify the density on compact subsets. Fix \(Q\Subset  \Omega_\infty^*\), and choose \(Q'\) with \(Q\Subset Q'\Subset  \Omega_\infty^*\). By \eqref{duallimit}, for all large \(j\),
\begin{equation}\label{duallimit2}
 Q'\subset\Omega_j^*,
 \qquad \tilde v_j\longrightarrow \tilde v_\infty \quad\hbox{uniformly on }Q'.
\end{equation}
By convexity we have
\begin{equation}\label{subgradbound}
 \sup_{y\in Q}|\partial\tilde v_j(y)|\le C_Q.
\end{equation}
Thus $|y\cdot D\tilde v_j(y)|\le C_Q$ for every $y\in Q$, so
\(Q\subset D_R(\tilde v_j,0)\) for some fixed \(R\). By
\eqref{relativeosc}, we have
\begin{equation}\label{densityapprox}
 \sup_{Q}\left|\frac{\rho^*_j}{b_j}-1\right|
 \longrightarrow0.
\end{equation}
Hence the target marginals converge on every compact subset of \( \Omega_\infty^*\), indeed in local total variation, to $\rho^*_{\infty}\mathbf1_{ \Omega_\infty^*}\,\dd y$.

It remains to identify the second marginal on the boundary of \(\Omega_\infty^*\). For \(R>0\), set
\[
 \nu_{j,R}:=(\operatorname{pr}_2)_\#(\pi_j\lfloor G_R),
 \qquad
 \nu_R:=(\operatorname{pr}_2)_\#(\pi\lfloor G_R).
\]
By \eqref{limitplans},
\[
 \nu_{j,R}\rightharpoonup\nu_R.
\]
Moreover,
\[
 \nu_{j,R}
 =\rho_j^*\mathbf1_{D_R(\tilde v_j,0)}\,\dd y
 \le C_R\,\dd y,
\]
where \(C_R\) is independent of \(j\). Passing to the limit, we have
\[
 \nu_R\le C_R\,\dd y.
\]
Since \(G_R\uparrow\mathbb R^n\times\mathbb R^n\), we have \(\nu_R\uparrow\nu\), where \(\nu\) is the second marginal of \(\pi\). Consequently,
\[
 \nu\ll\mathcal L^n.
\]

On the other hand, the support relation in \eqref{piproperties} gives
\[
 \nu\bigl(\mathbb R^n\setminus
 \operatorname{dom}\tilde v_\infty\bigr)=0.
\]
Since
\[
 \operatorname{dom}\tilde v_\infty\setminus\Omega_\infty^*
 \subset\partial\Omega_\infty^*
\]
and the boundary of a full-dimensional convex set has Lebesgue measure zero, it follows that
\[
 \nu(\mathbb R^n\setminus\Omega_\infty^*)=0.
\]

Finally, let \(\varphi\in C_c(\Omega_\infty^*)\). The local boundedness of the subgradients of \(\tilde v_j\) and \(\tilde v_\infty\) allows us to choose \(R\) such that all the transport pairs whose second coordinate lies in \(\operatorname{spt}\varphi\) belong to \(G_R\). Therefore, by \eqref{densityapprox},
\[
 \int\varphi\,\dd\nu
 =\lim_{j\to\infty}\int\varphi(y)\rho_j^*(y)\,\dd y
 =\rho_\infty^*\int\varphi(y)\,\dd y.
\]
Thus
\[
 \nu=\rho_\infty^*
 \mathbf1_{\Omega_\infty^*}\,\dd y.
\]

We now factorize the dual potential along the directions in \(L^\perp\). The first marginal of the limit plan is supported on $L$. By \eqref{piproperties}, we have 
\begin{equation}\label{DVinL}
 D\tilde v_\infty(y)\in L\qquad\text{for a.e. }y\in  \Omega_\infty^*.
\end{equation}
By convexity, for any \(y\in  \Omega_\infty^*, p\in \partial \tilde v_\infty(y)\) and \(e\in L^{\perp}\), one has $p\cdot e=0.$ Hence \(\tilde v_\infty\) is constant on every convex fiber \(\Omega^*_{\infty,\xi}:= \Omega_\infty^*\cap(\xi+L^\perp)\). There is a finite convex function $\varphi$ on the open convex projected domain $\Omega_L^*=P_L \Omega_\infty^*$ such that
\begin{equation}
 \tilde v_\infty(y)=\varphi(P_Ly),\qquad y\in  \Omega_\infty^*.
\end{equation}

Define
\[
 \phi(x):=
 \sup_{\xi\in\Omega_L^*}
 \{x\cdot\xi-\varphi(\xi)\},
 \qquad x\in L.
\]
For \(x\in L\), \eqref{dualidentification} and the preceding factorization give
\begin{align*}
 \tilde u_\infty(x)
 &=\sup_{y\in\Omega_\infty^*}
   \{x\cdot y-\tilde v_\infty(y)\}\\
 &=\sup_{y\in\Omega_\infty^*}
   \{x\cdot P_Ly-\varphi(P_Ly)\}\\
 &=\sup_{\xi\in\Omega_L^*}
   \{x\cdot\xi-\varphi(\xi)\}
 =\phi(x).
\end{align*}
Thus both identities in \eqref{vfiber} hold.

We next integrate along the \(L^\perp\)-fibers. Put \(q=n-m\) and identify \(L\) with \(\mathbb R^m\). Fubini gives
\[
 (P_L)_\#
 \bigl(\rho_\infty^*\mathbf1_{\Omega_\infty^*}\,\dd y\bigr)
 =\rho_L^*(\xi)\,\dd\xi,
 \qquad
 \rho_L^*(\xi)
 =\rho_\infty^*\mathcal H^q(\Omega^*_{\infty,\xi})
 =\rho_\infty^*\mathcal H^q
 \bigl(\Omega_\infty^*\cap(\xi+L^\perp)\bigr).
\]
For $\xi_0,\xi_1\in \Omega_L^*$ and $0<t<1$, convexity gives
\begin{equation}\label{fiberincl}
 (1-t) \Omega^*_{\infty,\xi_0}+t \Omega^*_{\infty,\xi_1}
 \subset  \Omega^*_{\infty,(1-t)\xi_0+t\xi_1}.
\end{equation}
The $q$-dimensional Brunn--Minkowski inequality therefore shows that $(\rho_L^*)^{1/q}$ is concave wherever it is finite.

It remains to exclude infinite fibers without assuming that \( \Omega_\infty^*\) is bounded. If \(Q\Subset\Omega_L^*\), by local boundedness of \(\partial\varphi\), we have
\[
 \sup\{|\xi\cdot x|:\xi\in Q,\ x\in\partial\varphi(\xi)\}<\infty.
\]
By \eqref{vfiber}, almost every point of $ \Omega_\infty^*\cap P_L^{-1}(Q)$ lies in \(\{y\in  \Omega_\infty^*:y\cdot D\tilde v_{\infty}(y)<R^2\}\) for some fixed \(R\). That set has finite limiting mass, and hence
\begin{equation}\label{kfinite}
 \int_Q \rho_L^*\,d\xi<\infty.
\end{equation}
Thus $\rho_L^*<\infty$ almost everywhere. Suppose that \(\rho_L^*(\xi_0)=\infty\) at an interior point. Fix a ball \(B\Subset\Omega_L^*\) not containing \(\xi_0\). For every fixed \(t\in(0,1)\), by \eqref{fiberincl}, the fiber over \((1-t)\xi_0+t\eta\) has infinite volume for every \(\eta\in B\). These base points form a set of positive measure, which contradicts \eqref{kfinite}. Therefore every fiber over \(\xi\in\Omega_L^*\) is bounded and
\begin{equation*}
 0<\rho_L^*(\xi)<\infty\qquad(\xi\in\Omega_L^*).
\end{equation*}
In particular $(\rho_L^*)^{1/q}$ is a finite concave function on $\Omega_L^*$.

Finally, we identify the projected transport plan and preserve the mass identity. Let $\widehat\pi$ be the image of the limiting transport plan under $(x,y)\mapsto(x,P_Ly)$. Its marginals are $\rho\mathbf1_{\Omega_{\infty}}\,d\mathcal H^m\lfloor L$ and $\rho_L^*\,d\xi$. 
By \eqref{piproperties} and \eqref{vfiber}, it is concentrated on $x\in\partial\varphi(\xi)$. Since $\rho_L^*\,d\xi$ is absolutely continuous and $\varphi$ is differentiable almost everywhere,
\begin{equation}\label{projectedtransport}
 \widehat\pi=(D\varphi,\operatorname{id})_\#(\rho_L^*\,\dd\xi),
 \qquad (D\varphi)_\#(\rho^*_L\,\dd\xi)=\rho\mathbf1_{\Omega_{\infty}}\,\dd\mathcal H^m\lfloor L.
\end{equation}

Since the first marginal of \(\pi\) is supported on \(L\),
\[
 x\cdot P_Ly=x\cdot y\qquad\text{for \(\pi\)-a.e. }(x,y).
\]
Consequently, by Lemma~\ref{diacompac}(iii), we have for every \(r>0\),
\[
 \widehat\pi\{(x,\xi):x\cdot\xi<r^2\}=r^n.
\]
Using \eqref{projectedtransport}, we obtain \eqref{reducedpush}. Thus the mass identity is preserved.
\end{proof}

\section{Rigidity of normalized limits}
\label{sec6rigidity}
We prove the equality case for the weighted transports obtained in Section~\ref{sec5limit}, and then lift the resulting homogeneity to the original normalized limit. The concavity of \(\rho^{1/q}\) first yields a radial inequality with a sharp equality condition.
\begin{lemma}
Let \(q>0\). Let \(C\) be convex with \(0\in\overline C\), and suppose that \(\rho^{1/q}\) is finite, nonnegative, and concave on \(C\). Then
\begin{equation}\label{radialq}
 x\cdot D\rho(x)\le q\rho(x)
\end{equation}
at every differentiability point. On every admissible ray, the function \(r\mapsto r^{-q}\rho(r\theta)\) is nonincreasing. It is constant on an interval precisely when \(\rho(tx)=t^q\rho(x)\) there.
\end{lemma}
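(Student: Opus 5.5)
Set \(w=\rho^{1/q}\), which is finite, nonnegative and concave on \(C\). The plan is to prove everything first for \(w\) along rays from the origin, and only then convert to \(\rho=w^q\).

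\emph{Step 1: monotonicity along a ray.} Fix \(\theta\) and consider the admissible set \(\{r>0: r\theta\in C\}\). Since \(0\in\overline C\) and \(C\) is convex, this set is an interval with left endpoint \(0\). The function \(\psi(r)=w(r\theta)\) is concave and nonnegative on it. For \(0<s<r\) in this interval, we write \(s\theta=(s/r)(r\theta)+(1-s/r)z\), where \(z\) is a point of \(C\) near the origin. Concavity then gives
\[
 \psi(s)\ge \tfrac sr\,\psi(r)+(1-\tfrac sr)\,w(z)\ge \tfrac sr\,\psi(r),
\]
using \(w\ge0\). If \(0\notin C\), we apply this with points \(z\to0\) along a segment inside \(C\); only nonnegativity of \(w(z)\) is needed, so no limit of \(w\) at the boundary is required. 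Hence \(\psi(r)/r\) is nonincreasing. Raising to the power \(q>0\) preserves monotonicity, so \(r^{-q}\rho(r\theta)=(\psi(r)/r)^q\) is nonincreasing.

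\emph{Step 2: the differential inequality.} At a differentiability point \(x\) of \(\rho\), with \(\rho(x)>0\), the function \(w=\rho^{1/q}\) is also differentiable at \(x\). The one-sided derivative of \(t\mapsto t^{-q}\rho(tx)\) at \(t=1\) equals \(x\cdot D\rho(x)-q\rho(x)\). By Step 1 it is \(\le0\), which is \eqref{radialq}. If \(\rho(x)=0\), then \(x\) is a minimum of \(\rho\ge0\) in the interior, so \(D\rho(x)=0\) and the inequality is trivial. The case where \(x\) is a boundary point is excluded because differentiability is understood at interior points.

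\emph{Step 3: the equality case.} This is the only delicate point. If \(\psi(r)/r\) is constant on an interval \([a,b]\), then \(\psi\) is linear through the origin there, i.e. \(\psi(tr)=t\psi(r)\) whenever \(r,tr\in[a,b]\). Raising to the power \(q\) gives \(\rho(tx)=t^q\rho(x)\). The converse is immediate. I would state the equivalence on the interval itself rather than claim propagation to all smaller \(r\). If propagation were needed, concavity would supply it: equality in the chord inequality of Step 1 forces \(\psi\) to be linear on \([0,b]\). This follows because a concave function lying above the chord from \(0\) whose slope-ratio \(\psi(r)/r\) is constant at two points must coincide with that line between them. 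No further obstacle is expected.
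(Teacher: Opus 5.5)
Your proposal is correct and is essentially the paper's proof: concavity and nonnegativity of \(w=\rho^{1/q}\) along the ray, with an auxiliary point tending to the origin, give \(w(s\theta)/s\ge w(r\theta)/r\); raising to the power \(q\) and differentiating at \(t=1\) gives \eqref{radialq}, and the equality case is read off from constancy of the ratio (the paper instead phrases it through a.e.\ equality in \eqref{radialq} and absolute continuity along the ray). The one slip is that \(s\theta=(s/r)(r\theta)+(1-s/r)z\) holds only for \(z=0\); when \(0\notin C\), take \(z=\delta\theta\) with weights \(\frac{s-\delta}{r-\delta}\) and \(\frac{r-s}{r-\delta}\) and let \(\delta\to0\), exactly as the paper does, so no continuity of \(w\) is needed.
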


\begin{proof}
Set \(w=\rho^{1/q}\). Fix an admissible ray and \(0<\delta<s<r\). Concavity and nonnegativity imply
\[
 w(s\theta)\ge\frac{r-s}{r-\delta}w(\delta\theta)+\frac{s-\delta}{r-\delta}w(r\theta)\ge\frac{s-\delta}{r-\delta}w(r\theta).
\]
Letting \(\delta\rightarrow 0\), we have \(w(s\theta)/s\ge w(r\theta)/r\). Hence \(r^{-q}\rho(r\theta)\) is nonincreasing, and differentiation proves \eqref{radialq}. Now, \(r^{-q}\rho(r\theta)\) is locally absolutely continuous, hence, equality in \eqref{radialq} almost everywhere implies  that \(r^{-q}\rho(r\theta)\) is a constant, which implies \(\rho(tx)=t^q\rho(x)\).
\end{proof}

The limiting transports below may have infinite total mass. We shall use Proposition~\ref{prop:localfinite}, proved in Appendix~\ref{appendix}, which implies strict convexity, local \(C^1\) regularity, and the homeomorphism property for such locally finite transports. Hölder continuity under the assumptions above follows from the interior regularity theory for the Monge--Ampère equation.

Let $m\ge1$. Let $C,C^*\subset\mathbb R^m$ be open convex domains and let \(\phi\) on \(C\) and \(\varphi\) on \(C^*\) be the corresponding convex potential functions whose extensions satisfy \eqref{uvextensions} and \eqref{uvdual} in Section \ref{section2}. Since $\phi$ and $\varphi$ are Legendre duals of each other within the convex domains \(C\) and \(C^*\), we can express this minimal convex extension in the following equivalent form.
\begin{equation}\label{qrigid-extensions}
\begin{aligned}
      \underline{\phi}(x)&=\sup\{\ell(x):\ell\text{ is affine},\ \ell\le \phi\text{ in }C,
                         \ D\ell\in\overline{C^*}\},\\
     \underline{\varphi}(y)&=\sup\{\ell(y):\ell\text{ is affine},\ \ell\le \varphi\text{ in }C^*,
                         \ D\ell\in\overline C\}.
\end{aligned}
\end{equation}
For simplicity of notation, we continue to denote the extended functions by $\phi$ and $\varphi$.

\begin{lemma}
\label{qrigid-local-calculus}
Let $m\ge1$ and $q\ge0$. Let $C,C^*,\phi$ and $\varphi$ be given as above. Assume that \(D\phi\) transports \(\rho\,\dd x\) to \(\rho^*\,\dd y\). If \(q>0\), assume that \(\rho^{1/q}\) and \((\rho^*)^{1/q}\) are positive and concave in \(C\) and \(C^*\), respectively. If \(q=0\), assume that \(\rho,\rho^*\) are positive constants. Suppose also that \((x_c,y_c)\in\overline C\times\overline{C^*}\) satisfies
\[
 y_c\in\partial \phi(x_c),\qquad
 x_c\in\partial \varphi (y_c),\qquad
  \phi(x_c)+ \varphi (y_c)=x_c\cdot y_c,
\]
and, for every \(h>0\), the two sets
\[
 \bigl\{x\in C:(x-x_c)\cdot(D\phi(x)-y_c)<h\bigr\},
 \qquad
 \bigl\{y\in C^*:(y-y_c)\cdot(D\varphi(y)-x_c)<h\bigr\}
\]
are bounded and have finite mass. Then
\begin{equation*}
 \phi,\varphi\ \hbox{are strictly convex},\qquad
 \phi\in C^{1,\alpha}_{\rm loc}(C),\quad
 \varphi\in C^{1,\alpha}_{\rm loc}(C^*)
\end{equation*}
with a possibly compact-set-dependent exponent $\alpha>0$, and the following facts hold.

\begin{enumerate}[label=\textup{(\roman*)}]
\item The two measures are doubling. More precisely, if $E$ is a bounded ellipsoid centered at a point of \(\overline C\), or respectively of \(\overline{C^*}\), then
\begin{equation}\label{qrigid-doubling}
 \int_E \rho\,\dd x\le 2^{m+q}\int_{\frac12E}\rho\,\dd x,
 \qquad
 \int_E \rho^*\,\dd y\le 2^{m+q}\int_{\frac12E}\rho^*\,\dd y.
\end{equation}
When $q=0$, the exponent on the right is $m$. In these inequalities \(\rho\) and \(\rho^*\) are extended by zero outside \(C\) and \(C^*\), respectively.

\item The maps
\begin{equation}\label{qrigid-homeomorphisms}
 T:=D\phi:C\longrightarrow C^*,\qquad
 S:=D\varphi:C^*\longrightarrow C
\end{equation}
are mutually inverse homeomorphisms. In particular,
\begin{equation*}
 K\Subset C\Longrightarrow T(K)\Subset C^*,\qquad
 K^*\Subset C^*\Longrightarrow S(K^*)\Subset C.
\end{equation*}

\item After possibly decreasing the local exponent,
\begin{equation*}
 \phi\in C^{2,\alpha'}_{\rm loc}(C),\qquad
 \varphi\in C^{2,\alpha'}_{\rm loc}(C^*).
\end{equation*}
At corresponding points $y=T(x)$,
\begin{equation}\label{qrigid-legendre}
 H:=D^2\phi(x)>0,
 \qquad D^2\varphi(y)=H^{-1}.
\end{equation}

\item After translating \(x_c,y_c\) to the origin, denote
\[
 \Psi(x)=x\cdot T(x),\qquad
 \Psi^*(y)=y\cdot S(y),\qquad
 M(h)=\int_{\{\Psi<h\}}\rho.
\]
Then $\Psi_\#(\rho\,\dd x)$ is absolutely continuous on compact subintervals of $(0,\infty)$, $M$ is locally absolutely continuous, and
\begin{equation*}
 M'(h)=\int_{\{\Psi=h\}}\frac{\rho}{|\nabla\Psi|}
          \,d\mathcal H^{m-1}
 \quad\text{for a.e. }h>0.
\end{equation*}
\end{enumerate}
\end{lemma}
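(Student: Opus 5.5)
The plan is to reduce everything to Proposition~\ref{prop:localfinite} in the Appendix, which gives strict convexity, local $C^1$ regularity and the homeomorphism property for locally finite transports between convex domains with doubling measures whose densities are locally bounded above and below. So the first task is to verify its hypotheses, beginning with the doubling property (i).

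\emph{Step (i): doubling.} For $q=0$ the densities are constant on a convex set, so I would argue as in Lemma~\ref{csptcollapse}. Let $E$ be an ellipsoid centered at $z\in\overline C$. Since $C$ is convex and $z\in\overline C$, the homothety $x\mapsto z+\tfrac12(x-z)$ maps $E\cap C$ into $\tfrac12E\cap C$. This gives $|E\cap C|\le2^m|\tfrac12E\cap C|$. For $q>0$, set $w=\rho^{1/q}$. Concavity and positivity of $w$, together with the fact that $z\in\overline C$, give
\[
w\bigl(z+\tfrac12(x-z)\bigr)\ge\tfrac12w(x)\quad\text{for }x\in C.
\]
To obtain this at boundary points $z$, I would approximate $z$ by interior points and use that $w\ge0$. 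It follows that $\rho(z+\tfrac12(x-z))\ge2^{-q}\rho(x)$. Changing variables gives \eqref{qrigid-doubling} with the constant $2^{m+q}$, and the same argument applies to $\rho^*$.

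\emph{Step (ii): strict convexity and homeomorphism.} Positive concave functions are locally bounded above and below on compact subsets of $C$, so the densities are locally pinched. The assumption that the sets $\{(x-x_c)\cdot(D\phi-y_c)<h\}$ are bounded with finite mass supplies the local finiteness required by Proposition~\ref{prop:localfinite}. Applying it yields strict convexity, $C^{1,\alpha}_{\rm loc}$ regularity, and the statement that $T$ and $S$ are mutually inverse homeomorphisms. The compact-image property then follows because a homeomorphism maps compact sets to compact sets. I expect this step to be the main obstacle: Caffarelli's strict convexity argument must exclude lines or segments along which $\phi$ is affine and which reach $\partial C$. This is the place where doubling and the centering at $(x_c,y_c)$ are genuinely used, and I would check that the appendix proposition covers unbounded $C$ and $C^*$ by working inside the bounded sets above.

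\emph{Step (iii): $C^{2,\alpha'}$ regularity.} On $K\Subset C$, the potential $\phi$ is a strictly convex Alexandrov solution of
\[
\det D^2\phi=\rho/\rho^*(D\phi).
\]
A concave function is locally Lipschitz, so $\rho^{1/q}$ is locally Lipschitz and hence $\rho$ is locally $C^{0,1}$ on compacts. Since $D\phi$ is $C^\alpha$, the right-hand side is $C^{\alpha'}$, and Caffarelli's interior $C^{2,\alpha'}$ theory applies. The same reasoning works for $\varphi$. Differentiating $S\circ T=\mathrm{id}$ then gives \eqref{qrigid-legendre}, with $H>0$ because the determinant is positive.

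\emph{Step (iv): level sets of $\Psi$.} After translating, $\Psi=x\cdot T(x)$ is $C^1$ on $C$. By strict convexity,
\[
\frac{d}{dt}\Psi(t\theta)=\theta\cdot T(t\theta)+t\,\theta\cdot H(t\theta)\theta>0
\]
along rays, so $\nabla\Psi\ne0$ wherever $\Psi>0$. Each sublevel set is bounded, so the coarea formula applied on the compact region $\{h_1\le\Psi\le h_2\}$ gives the stated formula for $M'(h)$. It also shows that $\Psi_\#(\rho\,\dd x)$ is absolutely continuous on compact subintervals of $(0,\infty)$, with density equal to the integrand; the radial monotonicity excludes mass on the level sets. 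Near $\partial C$ the integrand may degenerate, but monotone convergence over an interior exhaustion of $C$ handles this.
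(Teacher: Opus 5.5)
Your proposal is correct and follows the paper's own proof. It uses the same homothety-plus-concavity argument for the doubling bound $2^{m+q}$, Proposition~\ref{prop:localfinite} together with Caffarelli's interior theory for (ii)--(iii), and the coarea formula with an interior exhaustion and monotone convergence for (iv); the only small differences are that the paper obtains a uniform bound $|\nabla\Psi|\ge h_0/R$ on $\{h_0\le\Psi\le h_1\}$ from $x\cdot\nabla\Psi=\Psi+x^tHx\ge\Psi$ (your band is generally not compact, since it can reach $\partial C$), whereas your pointwise positivity of $x\cdot\nabla\Psi$ plus exhaustion also suffices, and that local finiteness of $\rho\,\dd x$ near $\partial C$ comes from $\rho^{1/q}$ lying below a supporting affine function rather than from the sublevel-set hypothesis.
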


\begin{proof}

Assume first that $q>0$ and write $w=(\rho)^{1/q}$.  If $E$ is centered at $z\in\overline C$, the map \(x\mapsto z+(x-z)/2\) sends \(E\cap C\) into \(\frac12E\cap C\). If $z\in C$, by concavity and nonnegativity, we have
\[
 w\left(\frac{x+z}{2}\right)\ge\frac{w(z)+w(x)}2\ge\frac{w(x)}2.
\]
When $z\in\partial C$, choose $z_j\in C$ with $z_j\to z$, apply the same inequality at $(x+z_j)/2$, and pass to the limit using continuity of $w$ at the interior point $(x+z)/2$. This implies \(w((x+z)/2)\ge w(x)/2\) without assuming a boundary value for \(w\). Consequently,
\[
 \int_{\frac12E}\rho
 \ge2^{-m}\int_{E\cap C}\rho((x+z)/2)\,\dd x
 \ge2^{-(m+q)}\int_E\rho.
\]
The proof of the equality for $\rho^*$ is the same. For $q=0$, the weights are constant and the same homothety gives the factor $2^m$. This proves \eqref{qrigid-doubling}. The homeomorphism and regularity assertions \eqref{qrigid-homeomorphisms} and \eqref{qrigid-legendre} follow from Proposition~\ref{prop:localfinite} and Caffarelli's standard interior regularity theory.

We next justify the coarea formula and absolute continuity. At every interior point,
\begin{equation*}
 \nabla\Psi=T+Hx,
 \qquad x\cdot\nabla\Psi=\Psi+x^tHx\ge\Psi.
\end{equation*}
Fix \(0<h_0<h_1<h_2<\infty\). By hypothesis,
\(\{\Psi<h_2\}\subset B_R\) for some \(R<\infty\). Hence
\begin{equation}\label{qrigid-gradlower}
 |\nabla\Psi|\ge\frac{h_0}{R}
 \qquad\text{on }\{h_0\le\Psi\le h_1\}.
\end{equation}
Choose a sequence \(\chi_\ell\in C_c^\infty(C)\) such that
\[
 0\le\chi_\ell\uparrow1
 \qquad\text{pointwise in }C.
\] 
If \(N\subset[h_0,h_1]\) is null, by coarea and
\eqref{qrigid-gradlower}, we have
\[
 \int_{\Psi^{-1}(N)}\rho \chi_\ell\,\dd x
 \le\frac R{h_0}
 \int_N\int_{\{\Psi=h\}}\rho \chi_\ell\,
       d\mathcal H^{m-1}\,dh=0.
\]
Monotone convergence shows that \(\Psi_\#(\rho\,\dd x)\) is absolutely continuous on \([h_0,h_1]\). Applying coarea once more gives, for every Borel \(E\subset[h_0,h_1]\),
\[
 \int_{\{\Psi\in E\}}\rho \,\dd x
 =\int_E\int_{\{\Psi=h\}}
       \frac{\rho }{|\nabla\Psi|}\,d\mathcal H^{m-1}\,dh.
\]
Since the band is arbitrary, \(M\) is locally absolutely continuous on \((0,\infty)\), and
\[
 M'(h)=\int_{\{\Psi=h\}}\frac{\rho }{|\nabla\Psi|}
          \,d\mathcal H^{m-1}
 \qquad\text{for a.e. }h>0.
\]
The same argument applies to \(\Psi^*\).
\end{proof}

The proof of the next theorem is modeled on the monotonicity formula and its equality case in Collins--Tong \cite[Theorem~3.1]{CollinsTong2025}, using the calculation in \cite[proof of Proposition~3.1]{CollinsTong2025}. We include the details because neither homogeneity of the densities nor conic structure of the domains is assumed here. 
\begin{theorem}
\label{qrigidi}
Let \(m\ge1\), \(q\ge0\), and put \(n=m+q\). Let \(\rho\,\dd x\) and \(\rho^*\,\dd y\) be locally finite measures on convex domains \(C,C^*\subset\mathbb R^m\). If \(q>0\), assume that \(\rho^{1/q}\) and \((\rho^*)^{1/q}\) are nonnegative and concave. If \(q=0\), assume that \(\rho,\rho^*\) are positive constants. Let \(\phi,\varphi\) be dual convex potentials satisfying \eqref{qrigid-extensions}, and assume that \(D\phi\) transports \(\rho\,\dd x\) to \(\rho^*\,\dd y\). Assume that \(C\) and \(C^*\) are the interiors of the supports of $\rho, \rho^*,$ respectively, and \(0\in\overline C\cap\overline{C^*}\). Assume also that
\begin{equation}\label{qrigid-center}
  \phi(0)+ \varphi (0)=0,
 \qquad 0\in\partial \phi(0),
 \qquad 0\in\partial \varphi (0).
\end{equation}
Suppose that, for every \(h>0\), the sets
\[
 \{x\in C:x\cdot D\phi(x)<h\},
 \qquad
 \{y\in C^*:y\cdot D\varphi(y)<h\}
\]
are bounded and have finite mass. If
\begin{equation*}
 r^{-n}\int_{\{x\cdot D\phi(x)<r^2\}}\rho(x)\,\dd x
\end{equation*}
is a positive constant for \(0<r<\infty\), then \(C,C^*\) are cones, \(\rho,\rho^*\) are homogeneous of degree \(q\), and the normalized potentials $\phi- \phi(0),\ 
 \varphi- \varphi (0)$ are homogeneous of degree two.
\end{theorem}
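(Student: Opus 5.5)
The plan is to rerun the smooth computation of Theorem~\ref{nearlymonotonicity}, Step~1, for the limiting transport in $\mathbb R^m$ with weights $\rho,\rho^*$. The inequalities there become identities when $F(r)=r^{-n}M(r^2)$ is constant, and each discarded nonnegative term must then vanish. Lemma~\ref{qrigid-local-calculus} supplies the necessary regularity: $\phi,\varphi\in C^{2,\alpha'}_{\rm loc}$, $T=D\phi$ and $S=D\varphi$ are inverse homeomorphisms with $D^2\varphi(T)=H^{-1}$, and $M$ is locally absolutely continuous with the coarea formula for $M'$.

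Set $\Psi(x)=x\cdot T(x)$. For a.e.\ $h$ I would establish the analogue of \eqref{exactidentity} with $n=m+q$:
\[
 4hM'(h)-2mM(h)=I_\rho(h)+I_{\rho^*}(h)-\mathcal B_\rho(h)-\mathcal B_{\rho^*}(h)-\mathcal E(h),
\]
where $I_\rho(h)=\int_{\{\Psi<h\}}x\cdot D\rho$. The boundary terms $\mathcal B\ge0$ come from $0\in\overline C\cap\overline{C^*}$ and convexity, and $\mathcal E\ge0$ is the same squared defect $|H^{1/2}x-H^{-1/2}T|^2$. The divergence theorem cannot be applied directly on $C$, which may be unbounded and where $\rho$ is only locally Lipschitz. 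Instead I would apply it on $\{\Psi<h\}\cap C_k$ for an exhaustion by smooth convex $C_k\uparrow C$ containing $0$ in their closure, or equivalently test with the cutoffs $\chi_\ell$ from the proof of Lemma~\ref{qrigid-local-calculus}. The boundary contribution on $\partial C_k$ is $\ge0$, so one obtains an inequality in the needed direction. The radial inequality \eqref{radialq} gives $I_\rho\le qM$ and $I_{\rho^*}\le qM$, where $I_{\rho^*}$ is computed over $\{\Psi^*<h\}$, which carries the same mass $M(h)$ because $\Psi^*\circ T=\Psi$. Hence
\[
 4hM'-2nM\le -(qM-I_\rho)-(qM-I_{\rho^*})-\mathcal B_\rho-\mathcal B_{\rho^*}-\mathcal E\le0,
\]
that is, $\frac{d}{dh}(h^{-n/2}M)\le0$. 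Constancy of $F$ forces every term on the right to vanish for a.e.\ $h$, and therefore on all sublevel sets.

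Each vanishing term then yields a piece of the conclusion.
\begin{enumerate}
\item From $I_\rho=qM$, equality holds in \eqref{radialq} a.e.\ in $C$. By the equality case of the radial lemma, $r^{-q}\rho(r\theta)$ is constant on each admissible ray, so $\rho$ is $q$-homogeneous; the same argument applies to $\rho^*$. When $q=0$ this step is vacuous.
\item From $\mathcal E=0$ on a.e.\ level set, hence everywhere by continuity, we get $Hx=T(x)$, that is, $D^2\phi(x)\,x=D\phi(x)$. Thus $D\phi$ is $1$-homogeneous along rays, and with $\phi(0)=0$ and $0\in\partial\phi(0)$ (normalizing so that $\phi(0)=\varphi(0)=0$) this makes $\phi$ $2$-homogeneous along each admissible ray. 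Since $T$ is $1$-homogeneous and its inverse $S$ then is too, $\varphi$ is $2$-homogeneous as well.
\item The cone property comes from combining homogeneity with $\mathcal B=0$. Vanishing of $\mathcal B_\rho$, or of the corresponding positive defect in the exhaustion, means $x\cdot\nu_C=0$ $\mathcal H^{m-1}$-a.e.\ on $\partial C\cap\{\Psi<h\}$ wherever $\rho>0$. Together with $\rho>0$ in $C$ and the strict positivity of $\rho$ near such boundary points from concavity, this says that $\partial C$ is ruled by rays through $0$, so $C$ is a cone. The same holds for $C^*$.
\end{enumerate}

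I expect the main obstacle to be justifying the identity, and especially extracting equality in the boundary term, when $C$ is unbounded, $\partial C$ is nonsmooth, and $\rho$ may degenerate at $\partial C$. Weak concave densities make boundary traces delicate. To handle this I would first try to avoid traces altogether. One route is to show that $\Psi$-sublevel sets are star-shaped, since $p_\theta$ is nondecreasing, and derive the radial ODE directly in polar coordinates. Another is to obtain the cone property a posteriori: once $\rho$ and $T$ are homogeneous, $T(tx)=tT(x)$ carries $tC$ onto $tC^*$ with matching pushforwards, and by uniqueness of monotone transport the dilation $tC$ must coincide with $C$ up to null sets. Since $C$ is open and convex, $tC=C$ for all $t>0$, so $C$ is a cone.
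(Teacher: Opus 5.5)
Your homogeneity steps are essentially sound. The paper reads off the \(q\)-homogeneity of \(\rho,\rho^*\) from the equality case of an explicit polar-coordinate formula for \(nM(h)-A_C(h)\) as \(h_j\to\infty\); getting it from \(I_\rho=I_{\rho^*}=qM\) plus the equality case of the radial lemma is a legitimate variant. Deriving \(2\)-homogeneity from \(\mathcal E=0\) is exactly the paper's Step 3.

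That polar formula is also how the paper avoids the traces you worry about. The quantity \(nM-A_C\) equals a nonnegative radial density defect plus \(n\int_{\{R_h=R_C\}}\int_0^{R_C}r^{-q}\rho(r\theta)r^{n-1}\,\dd r\,\dd\theta\), so no integral over \(\partial C\) ever appears.

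\textbf{The gap is your item (3), the cone property.}

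\emph{The boundary-term route.} You claim that \(\rho\) is strictly positive near \(\partial C\) ``from concavity''. This is false. Concavity of \(\rho^{1/q}\) gives positivity only in the interior. \(\rho^{1/q}\) may vanish on parts of \(\partial C\), and there a vanishing boundary term carries no information. With the exhaustion you only know that \(\lim_k\mathcal B_{\rho,k}=0\). That is not a pointwise statement about \(x\cdot\nu_C\), and in any case \(\Psi\) is not defined on \(\partial C\).

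\emph{The fallback via uniqueness of monotone transport.} This fails outright. Take \(q=0\), \(C=C^*=B_1(e/2)\) with \(|e|=1\), \(\rho=\rho^*=1\), and \(\phi=|x|^2/2\). Then:
\begin{itemize}
\item \(\rho\) and \(T=\mathrm{id}\) are homogeneous;
\item \(\mathcal E\equiv0\);
\item the transport is unique;
\item yet \(C\) is not a cone.
\end{itemize}
The only hypothesis that fails is \(M(h)=ch^{n/2}\), and only for large \(h\). So homogeneity together with the transport relation cannot give the cone property. The flatness of \(M\) must be used a second time.

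\textbf{The missing step} is the paper's Step 4, a scaling identity for sublevel sets. Fix \(r>1\) and use three facts: \(r^{-1}C\subset C\), \(\Psi(rz)=r^2\Psi(z)\), and \(\rho(rz)=r^q\rho(z)\) whenever \(z,rz\in C\). Together they give \(K_{r^2}=r\bigl(K_1\cap r^{-1}C\bigr)\), and hence \(r^{-n}M(r^2)=\int_{K_1\cap r^{-1}C}\rho\). Flatness says the left side equals \(M(1)=\int_{K_1}\rho\). Since \(\rho>0\) in \(C\), this forces \(rK_1\subset C\) for every \(r>1\). Now take any \(x\in C\). Since \(\Psi(sx)=s^2\Psi(x)\to0\), we have \(sx\in K_1\) for small \(s\), and therefore \(tx\in C\) for all \(t>0\).

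\emph{An equivalent fix inside the polar formula.} Equality forces \(|\{R_h=R_C\}|=0\) for a.e.\ \(h\). But \(2\)-homogeneity makes \(\Psi\) bounded on any ray with \(R_C(\theta)<\infty\), so \(R_h(\theta)=R_C(\theta)\) once \(h\) is large. Hence \(R_C=\infty\) for a.e.\ direction \(\theta\) with \(R_C(\theta)>0\).

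\emph{If you keep the boundary-term route,} two things are still needed. First, positivity of \(\rho\) at exit points must come from the \(q\)-homogeneity you already proved, not from concavity: \(r^{-q}\rho(r\theta)\) is constant on \((0,R_C(\theta))\), so it has a positive limit at the exit point. Second, you must convert ``\(x\cdot\nu_C=0\) a.e.'' into the cone property, for instance via the area formula for radial graphs.
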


\begin{proof}
After replacing \(\phi\) by \(\phi-\phi(0)\) and \(\varphi\) by \(\varphi-\varphi(0)\), we may assume that
\begin{equation}\label{qrigid-normalization}
  \phi(0)= \varphi (0)=0,\ \ 
 0\in\partial \phi(0),\quad
 0\in\partial \varphi (0).
\end{equation}
Set
\[
 T=D\phi,\qquad S=D\varphi,\qquad
 \Psi(x)=x\cdot T(x),\qquad \Psi^*(y)=y\cdot S(y),
\]
and
\[
 K_h=\{x\in C:\Psi(x)<h\},\qquad
 L_h=\{y\in C^*:\Psi^*(y)<h\},\qquad
 M(h)=\int_{K_h}\rho.
\]
By Lemma~\ref{qrigid-local-calculus},
\[
 T(K_h)=L_h,\qquad M(h)=\int_{L_h}\rho^*(y)\,\dd y,
\]
and all the regularity and coarea formulas used below hold. The mass assumption is exactly
\begin{equation}\label{Mhflat}
 M(h)=ch^{n/2}\qquad(h>0).
\end{equation}
For almost every \(h>0\), define
\[
 A_C(h)=\int_{\{\Psi=h\}}\rho
 \frac{x\cdot\nabla\Psi}{|\nabla\Psi|}
 \,\dd\mathcal H^{m-1}
\]
and define \(A_{C^*}(h)\) analogously.

\smallskip
\noindent\emph{Step 1: estimates for \(A_C\) and \(A_{C^*}\).} We claim that
\begin{equation}\label{Acbounds}
 A_C(h)\le nM(h),\qquad A_{C^*}(h)\le nM(h).
\end{equation}
It suffices to prove the first inequality. For \(\theta\in\mathbb S^{m-1}\), write
\[
 C\cap\mathbb R_+\theta=(0,R_C(\theta))\theta,\qquad
 K_h\cap\mathbb R_+\theta=(0,R_h(\theta))\theta,
\]
and let
\[
    \Gamma_C:=\{\theta\in\mathbb S^{m-1}:R_C(\theta)>0\}.
\]
Convexity and \eqref{qrigid-normalization} imply that
\[
 r\longmapsto\Psi(r\theta)
 =r\,\partial_r\phi(r\theta)
\] is nondecreasing. By Lemma~\ref{radialq}, $\rho_\theta(r):=r^{-q}\rho(r\theta)$ is nonincreasing. Since \(\partial_r\Psi>0\) on every positive level set, \(\{\Psi=h\}\) is the radial graph \(x=R_h(\theta)\theta\) over the set \(\{\theta:R_h(\theta)<R_C(\theta)\}\). The area formula for radial graphs gives
\[
 A_C(h)
 =
 \int_{\{\theta:R_h(\theta)<R_C(\theta)\}}
 \rho\bigl(R_h(\theta)\theta\bigr)
 R_h(\theta)^m\,\dd\theta.
\]
Hence
\begin{align*}
 nM(h)-A_C(h)
 &=
 n\int_{\{R_h<R_C\}}\int_0^{R_h}
 \bigl(\rho_\theta(r)-\rho_\theta(R_h)\bigr)r^{n-1}
 \,\dd r\,\dd\theta \\
 &\quad
 +n\int_{\{R_h=R_C\}}\int_0^{R_C}
 \rho_\theta(r)r^{n-1}\,\dd r\,\dd\theta .
\end{align*}
This proves \eqref{Acbounds}. Since \(\rho>0\) on \(C\), equality holds only if the set of directions for which \(R_h=R_C\) has spherical measure zero and \(\rho_\theta\) is constant on \((0,R_h)\) for almost every remaining direction.

\smallskip
\noindent\emph{Step 2: equality in the two estimates.}
Let \(y=T(x)\) and \(H=D^2\phi(x)\). Then
\[
 D^2\phi(x)>0,\qquad D^2\varphi(y)=\left(D^2\phi(x)\right)^{-1},\qquad
 \Psi^*(T(x))=\Psi(x),
\]
and
\[
 \nabla\Psi=T+D^2\phi(x)x,\qquad
 \nabla\Psi^*(T(x))=x+\left(D^2\phi(x)\right)^{-1}T(x).
\]
Fix \( 0\le \eta\in C_c((0,\infty))\). Choose increasing sequences
\[
 \alpha_\ell\in C_c^\infty(C),
 \qquad
 \beta_\ell\in C_c^\infty(C^*),
 \qquad
 0\le\alpha_\ell\rightarrow1,
 \quad
 0\le\beta_\ell\rightarrow1,
\]
and set
\[
 \gamma_\ell(x)=\alpha_\ell(x)\beta_\ell(T(x)),\qquad
 \gamma_\ell^*(y)=\alpha_\ell(S(y))\beta_\ell(y).
\]
Then $\gamma_\ell^*(T(x))=\gamma_\ell(x).$ By coarea, the transport identity, and monotone convergence, we have
\begin{align*}
 \int_0^\infty\eta(h)A_{C^*}(h)\,\dd h
 &=\lim_{\ell\to\infty}
   \int_{C^*}\eta(\Psi^*)\,\rho^*\,\gamma_\ell^*\,
        y\cdot\nabla\Psi^*\,\dd y\\
 &=\lim_{\ell\to\infty}
   \int_C\eta(\Psi)\,\rho\,\gamma_\ell\,
        \bigl(\Psi+T^t\left(D^2\phi(x)\right)^{-1}T\bigr)\,\dd x\\
 &=\lim_{\ell\to\infty}\int_0^\infty\eta(h)
   \int_{\{\Psi=h\}}\frac{\rho\gamma_\ell}{|\nabla\Psi|}
        \bigl(h+T^t\left(D^2\phi(x)\right)^{-1}T\bigr)
        \,\dd\mathcal H^{m-1}\,\dd h\\
 &=\int_0^\infty\eta(h)
   \int_{\{\Psi=h\}}\frac{\rho}{|\nabla\Psi|}
        \bigl(h+T^t\left(D^2\phi(x)\right)^{-1}T\bigr)
        \,\dd\mathcal H^{m-1}\,\dd h .
\end{align*}
Since \(\eta\) is arbitrary, for almost every \(h>0\),
\begin{align*}
 A_C(h) &=\int_{\{\Psi=h\}} \frac{\rho}{|\nabla\Psi|} \bigl(h+x^tD^2\phi(x)x\bigr) \,\dd\mathcal H^{m-1},
 \\
 A_{C^*}(h) &= \int_{\{\Psi=h\}} \frac{\rho}{|\nabla\Psi|} \bigl(h+T^t\left(D^2\phi(x)\right)^{-1}T\bigr) \,\dd\mathcal H^{m-1}.
\end{align*}
Since \(h=\Psi=x\cdot T\) on \(\{\Psi=h\}\),
\begin{align*}
 A_C(h)+A_{C^*}(h)-4hM'(h)
 &=
 \int_{\{\Psi=h\}}
 \frac{\rho}{|\nabla\Psi|}
 \left|\left(D^2\phi(x)\right)^{1/2}x-\left(D^2\phi(x)\right)^{-1/2}T\right|^2
 \,\dd\mathcal H^{m-1}
 \\
 &=:\mathcal E(h)\ge0.
\end{align*}
By \eqref{Mhflat}, $4hM'(h)=2nM(h).$
Combining this identity with \eqref{Acbounds} gives
\[
 2nM(h)\ge A_C(h)+A_{C^*}(h)
 =2nM(h)+\mathcal E(h)\ge 2nM(h).
\]
Therefore
\begin{equation}\label{qrigid-zerodefects}
 A_C(h)=A_{C^*}(h)=nM(h),\qquad \mathcal E(h)=0
 \quad\text{for a.e. }h>0.
\end{equation}

\smallskip
\noindent\emph{Step 3: homogeneity of the densities and potentials.}
Choose \(h_j\rightarrow\infty\) for which \eqref{qrigid-zerodefects} holds. Outside a fixed null set of directions, the equality case in Step~1 gives
\[
 R_{h_j}<R_C
 \quad\text{and}\quad
 \rho_\theta\ \text{constant on }(0,R_{h_j}(\theta))
\]
for every \(j\). Since \(K_{h_j}\rightarrow C\), $R_{h_j}(\theta)\rightarrow R_C(\theta).$ It follows that
\begin{equation}\label{qrigid-homodensity}
    \rho(tx)=t^q \rho(x), \qquad \rho^*(ty)=t^q \rho^*(y),
\end{equation}
whenever \(x,tx\in C\) and \(y,ty\in C^*\).

By strict convexity and \eqref{qrigid-normalization}, we have
\[
 \Psi>0\quad\text{in }C\setminus\{0\}.
\]
For every \(I\Subset(0,\infty)\), by coarea formula and \eqref{qrigid-zerodefects}, we have
\[
 0=\int_I\mathcal E(h)\,\dd h=\int_{\{\Psi\in I\}} \rho(x)
 \left|\left(D^2\phi(x)\right)^{1/2}x-\left(D^2\phi(x)\right)^{-1/2}T(x)\right|^2 \,\dd x.
\]
Exhausting \(\{\Psi>0\}\) and using continuity gives $D^2\phi(x)x=D\phi(x)\ (x\in C).$ Consequently,
\[
     D\bigl(x\cdot D\phi-2\phi\bigr)=0,
\]
so $x\cdot D\phi(x)-2\phi(x)=c_0 \ (x\in C)$ for some constant \(c_0\). Fix \(x\in C\) and set \(w(t)=\phi(tx)\). On every interval on which \(tx\in C\),
\[
 tw'(t)-2w(t)=c_0,
\]
and hence $w(t)=At^2-\frac{c_0}{2}.$ By convexity and \eqref{qrigid-normalization},
\[
 0\le\phi(tx)\le t\phi(x)\longrightarrow 0
 \qquad\text{as }t\rightarrow  0.
\]
Thus \(c_0=0\), and
\begin{equation}\label{qrigid-homopotential}
    \phi(tx)=t^2\phi(x)
\end{equation}
whenever \(x,tx\in C\). The same argument applies to \(\varphi\).

\smallskip
\noindent\emph{Step 4: the domains are cones.}
Recall that
\[
 K_h=\{x\in C:\Psi(x)<h\}.
\]
Fix \(r>1\). Since \(C\) is convex and \(0\in\overline C\), we have
\[
 r^{-1}C\subset C.
\]
The homogeneity identities already proved imply
\[
 K_{r^2}
 =r\bigl(K_1\cap r^{-1}C\bigr).
\]
Indeed, if \(z\in K_1\cap r^{-1}C\), then \(rz\in C\) and
\[
 \Psi(rz)=r^2\Psi(z)<r^2.
\]
The converse follows by applying the same identity to \(z=x/r\).

Using \(\rho(rz)=r^q\rho(z)\) and \(n=m+q\), we obtain
\[
\begin{aligned}
 M(r^2)
 &=\int_{K_{r^2}}\rho(x)\,\dd x\\
 &=r^m\int_{K_1\cap r^{-1}C}\rho(rz)\,\dd z\\
 &=r^n\int_{K_1\cap r^{-1}C}\rho(z)\,\dd z.
\end{aligned}
\]
Consequently,
\begin{equation}\label{monoeq}
 r^{-n}M(r^2)
 =
 \int_{K_1\cap r^{-1}C}\rho(z)\,\dd z.
\end{equation}
By \eqref{Mhflat}, the left-hand side of \eqref{monoeq} equals
\[
 M(1)=\int_{K_1}\rho(z)\,\dd z.
\]
Since \(K_1\cap r^{-1}C\subset K_1\) and \(\rho>0\) in \(C\), it follows that
\[
 K_1\subset r^{-1}C.
\]
Equivalently,
\begin{equation}\label{K1expands}
 rK_1\subset C
 \qquad(r>1).
\end{equation}

Now fix \(x\in C\) and \(t>1\). Since \(0\in\overline C\), we have \(sx\in C\) for every \(0<s<1\). Moreover,
\[
 \Psi(sx)=s^2\Psi(x)\longrightarrow0
 \qquad(s\rightarrow 0).
\]
Thus we may choose \(s\in(0,1)\) such that \(sx\in K_1\). Applying \eqref{K1expands} with \(r=t/s>1\), we obtain
\[
 tx=\frac ts(sx)\in C.
\]
For \(0<t<1\), the inclusion \(tx\in C\) follows directly from the convexity of \(C\) and \(0\in\overline C\). Hence
\[
 tC=C\qquad(t>0),
\]
so \(C\) is a cone. The same argument applies to \(C^*\). The homogeneity identities for \(\rho,\rho^*,\phi,\varphi\) are therefore global.
\end{proof}

We now identify the normalized limit.
\begin{theorem}\label{sequential}
Under \eqref{john}--\eqref{Fflat}, the centered sub-level sets at heights $1$ and $1/4$ subconverge, and their limits satisfy
\[
 S_{1/4}^c[\tilde u_\infty](0)
 =\tfrac12S_1^c[\tilde u_\infty](0).
\]
\end{theorem}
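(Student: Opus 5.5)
The plan is to combine the compactness of Lemma~\ref{diacompac}, the structure results of Lemmas~\ref{gmarconc} and \ref{fibredu}, and the rigidity Theorem~\ref{qrigidi} to show that $\tilde u_\infty$ is homogeneous of degree two. The identity for the sections then follows by scaling. Subconvergence of the centered sub-level sets at heights $1$ and $1/4$ is already \eqref{ScUjlimit}: both converge in Hausdorff distance to the corresponding centered sections of $\tilde u_\infty$. So the only task is to show
\[
 \tilde u_\infty(tx)=t^2\tilde u_\infty(x)\qquad (t>0,\ x\in\mathbb R^n).
\]
Given this, the centering plane at height $1/4$ is the rescaling of the one at height $1$: its slope is $p/2$, and the barycenter is preserved. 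By uniqueness of the centering plane, $S_{1/4}^c[\tilde u_\infty](0)=\tfrac12 S_1^c[\tilde u_\infty](0)$.

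To prove homogeneity I split according to $m=\dim L$ from Lemma~\ref{gmarconc}.

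\emph{Case $m=n$.} The source marginal is $\rho_\infty\mathbf 1_{\Omega_\infty}\,\dd x$. The target marginal is $\rho^*_\infty\mathbf 1_{\Omega_\infty^*}\,\dd y$, by the same argument as in part (1) of Lemma~\ref{fibredu}, which used only \eqref{relativeosc} and \eqref{duallimit}. The plan $\pi$ is supported on $\partial\tilde u_\infty$. Since the source is absolutely continuous, $\pi=(\mathrm{id},D\tilde u_\infty)_\#\mu$. The identity $\pi(G_R)=R^n$ from \eqref{piproperties} then says exactly that $r^{-n}\int_{\{x\cdot D\tilde u_\infty<r^2\}}\rho_\infty=1$ for all $r$. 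The boundedness and finite mass of the sets $\{x\cdot D\tilde u_\infty<h\}$ come from Proposition~\ref{noescape} passed to the limit. Apply Theorem~\ref{qrigidi} with $q=0$, using the normalization $\tilde u_\infty(0)=\tilde v_\infty(0)=0$ and $0\in\partial\tilde u_\infty(0)$. One must check that $\tilde u_\infty$ coincides with the minimal extension \eqref{qrigid-extensions} of its restriction to $\Omega_\infty$. This follows from \eqref{dualidentification}, which expresses $\tilde u_\infty$ as the conjugate of $\tilde v_\infty$ restricted to $\Omega_\infty^*$. The theorem gives that $\tilde u_\infty|_{\Omega_\infty}$ and $\tilde v_\infty|_{\Omega^*_\infty}$ are $2$-homogeneous and that $\Omega^*_\infty$ is a cone. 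Then $\tilde u_\infty=\sup_{y\in\Omega^*_\infty}\{x\cdot y-\tilde v_\infty(y)\}$ is $2$-homogeneous on all of $\mathbb R^n$, by substituting $y\mapsto ty$ in the supremum.

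\emph{Case $1\le m<n$.} Use Lemma~\ref{fibredu}. The reduced problem $(\phi,\varphi)$ on $(\Omega_\infty,\Omega_L^*)\subset L\cong\mathbb R^m$ has densities $\rho,\rho_L^*$ whose $q$-th roots are concave, and it satisfies the flat mass identity \eqref{reducedpush}. Theorem~\ref{qrigidi} then makes $\varphi$ $2$-homogeneous and $\Omega_L^*$ a cone, and $\rho_L^*$ $q$-homogeneous. The lifting is the main obstacle. We have $\tilde v_\infty(y)=\varphi(P_Ly)$ on $\Omega_\infty^*$, so by \eqref{dualidentification}
\[
 \tilde u_\infty(x)=\sup_{y\in\Omega^*_\infty}\bigl\{P_Lx\cdot P_Ly+P_{L^\perp}x\cdot P_{L^\perp}y-\varphi(P_Ly)\bigr\}.
\]
For this to be homogeneous I need $\Omega^*_\infty$ itself to be a cone, not just its projection. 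I would obtain this from the homogeneity of $\rho_L^*(\xi)=\rho^*_\infty\mathcal H^q(\Omega^*_\infty\cap(\xi+L^\perp))$, which gives $\mathcal H^q$ of the fibre over $t\xi$ equal to $t^q$ times that over $\xi$. Combined with the Brunn--Minkowski inclusion \eqref{fiberincl} applied between $\xi$ and the apex direction, this is an equality case. Since $0\in\overline{\Omega^*_\infty}$, the fibres over $t\xi$ contain $t$ times the fibres over $\xi$ up to translation. Equality of volumes in Brunn--Minkowski (with $0$ in the closure) should force fibre$(t\xi)=t\,$fibre$(\xi)$, so $\Omega^*_\infty$ is a cone.

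With $\Omega^*_\infty$ a cone and the integrand $2$-homogeneous after the substitution $y\mapsto ty$, the formula above gives $\tilde u_\infty(tx)=t^2\tilde u_\infty(x)$. If the Brunn--Minkowski equality argument turns out to be delicate near the degenerate fibre at $\xi\to0$, I would fall back on a direct argument. By \eqref{piproperties}, $\tilde u_\infty$ restricted to $L$ equals $\phi$ and is homogeneous; I would then control the $L^\perp$ dependence using the fact that the supremum in $y_\perp$ over each fibre is the support function of that fibre, which is linear in $t$ once the fibres scale.
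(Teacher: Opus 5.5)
Your proposal is correct and follows the paper's proof essentially step for step: the split on $m=\dim L$, Theorem~\ref{qrigidi} applied with $q=0$ or to the reduced problem of Lemma~\ref{fibredu}, the lift via $t\,\Omega^*_{\infty,\xi}\subset\Omega^*_{\infty,t\xi}$ together with equality of fibre volumes (this inclusion of open convex sets suffices, so no Brunn--Minkowski equality case is needed, and $t>1$ follows from $\Omega_L^*$ being a cone), and the rescaling of centered sections. You are too quick only in checking the hypotheses of Theorem~\ref{qrigidi}, which the paper supplies: $(\tilde u_\infty+I_{\Omega_\infty})^*=\tilde v_\infty$ on $\Omega_\infty^*$ does not follow from \eqref{dualidentification} alone but from $\pi$-a.e.\ pairs having $x\in\Omega_\infty$, the positive target density, and continuity; and boundedness of $\{x\cdot D\phi<h\}$ needs the continuity of $D\phi$ from Proposition~\ref{prop:localfinite} in addition to Proposition~\ref{noescape}.
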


\begin{proof}
Apply Lemma~\ref{diacompac}. After passing to a subsequence, we obtain a locally uniform limit \(\tilde u_\infty\), its conjugate \(\tilde v_\infty=\tilde u_\infty^*\), the compatible limiting plan \(\pi\), and the Hausdorff limits of the two centered sub-level sets. Let \(\mu\) be the first marginal of \(\pi\), and set
\[
 L=\operatorname{aff}(\spt\mu),\qquad m=\dim L.
\]
By Lemma~\ref{gmarconc}, \(L\) is a linear subspace containing the origin and \(1\le m\le n\).

Suppose first that \(m=n\). Then $\mu=\rho_{\infty}\mathbf1_C\,\dd x$ for an open convex set \(C\) and a constant \(\rho_{\infty}>0\). The argument in the proof of Lemma~\ref{fibredu}, from
\eqref{dualKj} through the subsequent identification of the full second marginal, does not use \(m<n\). Therefore, we have
\[
 (\operatorname{pr}_2)_\#\pi=\rho^*_{\infty}\mathbf1_{C^*}\,\dd y,
 \qquad
 C^*=\operatorname{int}(\operatorname{dom}\tilde v_\infty),
\]
for some constant \(\rho^*_{\infty}>0\). Let \(\phi=\tilde u_{\infty}, \varphi=(\tilde u_\infty+I_C)^*\). For \(\pi\)-almost every \((x,y)\), we have \(x\in C\), \(y\in C^*\), and
\[
 \tilde u_\infty(x)+\tilde v_\infty(y)=x\cdot y.
\]
Consequently,
\[
 \varphi(y)
 \ge x\cdot y-\tilde u_\infty(x)
 =\tilde v_\infty(y).
\]
Since \(\varphi\le\tilde v_\infty\) by definition, the density of the second projection of \(\pi\) and continuity imply \(\varphi=\tilde v_\infty\) on \(C^*\). Hence by \eqref{dualidentification} and the definition of \(\varphi\), we have
\[
 \phi(x)=\sup_{y\in C^*}\{x\cdot y-\varphi(y)\},
 \qquad
 \varphi(y)=\sup_{x\in C}\{x\cdot y-\phi(x)\}.
\]
Thus \(\phi\) and \(\varphi\) are Legendre dual relative to \(C\) and \(C^*\). Proposition~\ref{prop:localfinite}, applied in both directions, shows that
\[
 D\phi:C\longrightarrow C^*,
 \qquad
 D\varphi:C^*\longrightarrow C
\]
are homeomorphisms. Lemma~\ref{diacompac}(iii) and
Proposition~\ref{noescape} show that the projections of
\[
 \spt\bigl(\pi\lfloor\{(x,y):x\cdot y<r^2\}\bigr)
\]
onto the two factors are bounded. Since \(D\phi,D\varphi\) are continuous and the limiting densities are positive constants, it follows that, for every \(r>0\), the sets
\[
 \{x\in C:x\cdot D\phi(x)<r^2\},
 \qquad
 \{y\in C^*:y\cdot D\varphi(y)<r^2\}
\]
are bounded. Moreover, Lemma~\ref{diacompac}(iii) gives
\[
 \mu\{x\in C:x\cdot D\phi(x)<r^2\}
 =\pi\{(x,y):x\cdot y<r^2\}
 =r^n,
\]
and hence
\[
 r^{-n}\mu\{x\in C:x\cdot D\phi(x)<r^2\}=1
 \qquad(r>0).
\]
Thus the limit is precisely the constant-density equality case of the Collins--Tong monotonicity formula. Since \(\phi(0)=\varphi(0)=0\) and \(\phi,\varphi\ge0\), the normalization gives \eqref{qrigid-center}. Hence all the hypotheses of Theorem~\ref{qrigidi} with \(q=0\) are satisfied. Its \(q=0\) argument is precisely the constant-density rigidity of Collins--Tong \cite[Theorem~3.1]{CollinsTong2025}, as used in \cite[proof of Theorem~4.1]{CollinsTong2025}. Hence \(C,C^*\) are cones and \(\phi=\tilde u_\infty,\varphi\) are 2-homogeneous.

It remains to consider the case \(m<n\). By Lemma \ref{gmarconc}, we have \(1\le m<n\) and, for every fixed \(R<\infty\),
\[
\sup_{x\in D_R(\tilde u_j,0)}
\operatorname{dist}(x,L)\longrightarrow 0.
\]
Hence the hypotheses of Lemma \ref{fibredu} are satisfied.

Put $q=n-m$. Lemma \ref{fibredu} gives a full-dimensional dual domain \(\Omega_{\infty}^*\), dual convex potentials \(\phi\) on \(C\) and \(\varphi\) on \(C^*:=P_L\Omega_{\infty}^*\), and a constant \(\rho^*_{\infty}>0\) such that
\begin{equation}\label{factorV}
 \tilde v_{\infty}(y)=\varphi(P_Ly),\qquad y\in \Omega_{\infty}^*.
\end{equation}
Moreover, with
\[
 \rho^*_L(\xi)=\rho^*_{\infty}\,\mathcal H^q(\Omega_\infty^*\cap(\xi+L^\perp)),
\]
every fiber has finite positive volume, \((\rho^*_L)^{1/q}\) is positive and concave, and
\begin{equation}\label{reducedflat}
 (D\varphi)_\#(\rho^*_L\,\dd\xi)
   =\rho\mathbf1_C\,d\mathcal H^m\lfloor L,
 \qquad
 r^{-n}\int_{\{\xi\cdot D\varphi(\xi)<r^2\}}\rho^*_L(\xi)\,\dd\xi=1
 \quad(r>0).
\end{equation}
By \eqref{dualidentification} and \eqref{factorV},
\begin{equation}\label{reduceddual}
 \phi(x)=\tilde u_{\infty}(x)
 =\sup_{y\in \Omega_\infty^*}\{x\cdot y-\tilde v_\infty(y)\}
 =\sup_{\xi\in C^*}\{x\cdot\xi-\varphi(\xi)\},
 \qquad x\in L.
\end{equation}

Define
\[
  \underline\phi:=(\varphi+I_{C^*})^*,
 \qquad
 \underline\varphi:=(\phi+I_C)^*.
\]

By \eqref{reduceddual},
\[
 \underline\phi=\phi\quad\text{on }L,
 \qquad
 x\cdot\xi-\phi(x)\le\varphi(\xi)
 \quad(x\in C,\ \xi\in C^*).
\]
Hence
\[
 \underline\varphi(\xi)
 =\sup_{x\in C}\{x\cdot\xi-\phi(x)\}
 \le\varphi(\xi)
 \qquad(\xi\in C^*).
\]

On the other hand, for \(\rho_L^*\,\dd\xi\)-almost every \(\xi\in C^*\), \eqref{reducedflat} implies \(x=D\varphi(\xi)\in C\). By Legendre duality we have 
\[
 \phi(x)+\varphi(\xi)=x\cdot\xi,
\]
and therefore
\[
 \underline\varphi(\xi)
 \ge x\cdot\xi-\phi(x)
 =\varphi(\xi).
\]
Thus \(\underline\varphi=\varphi\) almost everywhere in \(C^*\). Since both functions are finite and convex in \(C^*\), they are continuous, and hence
\[
 \underline\varphi=\varphi\qquad\text{in }C^*.
\]
Therefore \(\underline\phi\) and \(\underline\varphi\) are the minimal convex extensions of \(\phi\) and \(\varphi\), respectively, and satisfy \eqref{qrigid-extensions}.

Moreover,
\[
 \underline\phi^*
 =\operatorname{cl}(\varphi+I_{C^*}),
 \qquad
 \underline\varphi^*
 =\operatorname{cl}(\phi+I_C).
\]
Consequently,
\[
 \partial\underline\phi(\mathbb R^m)\subset\overline{C^*},
 \qquad
 \partial\underline\varphi(\mathbb R^m)\subset\overline C.
\]


The normalization and the continuity of \(\tilde u_{\infty}\) provide a sequence \(x_\ell\in C\) such that
\[
 x_\ell\to0,\qquad \phi(x_\ell)\to0.
\]
Moreover, \eqref{dualidentification} and \(\tilde v_{\infty}(0)=0\) provide \(y_\ell\in \Omega^*_{\infty}\) such that
\[
 y_\ell\to0,\qquad \tilde v_{\infty}(y_\ell)\to0.
\]
Setting \(\xi_\ell=P_Ly_\ell\), \eqref{factorV} gives
\[
 \xi_\ell\to0,\qquad \varphi(\xi_\ell)\to0.
\]
Consequently the two minimal extensions are nonnegative, vanish at the origin, and satisfy
\[
  \underline\phi(0)=\underline{\varphi}(0)=0,\qquad
 0\in\partial \underline\phi(0),\qquad
 0\in\partial\underline{\varphi}(0),
\]
so \eqref{qrigid-center} holds.

Since \(\rho^{1/q}\) and \((\rho^*_L)^{1/q}\) are concave, by the proof of \eqref{qrigid-doubling} in Lemma \ref{qrigid-local-calculus} we have that $\rho\dd \mathcal{H}^m$ and $\rho^*_L\dd \xi$ are locally doubling. Proposition~\ref{prop:localfinite} shows that \(D\phi\) and \(D\varphi\) are homeomorphisms.
Let
\[
 \widehat\pi:=(\operatorname{id},P_L)_\#\pi,
 \qquad
 \widehat G_r:=\{(x,\xi)\in L\times L:x\cdot\xi<r^2\}.
\]
Since the first marginal of \(\pi\) is supported on \(L\), we have \(x\cdot P_Ly=x\cdot y\) for \(\pi\)-almost every \((x,y)\), and hence
\[
 \widehat\pi\lfloor\widehat G_r
 =(\operatorname{id},P_L)_\#(\pi\lfloor G_r).
\]
For fixed \(r>0\), Proposition~\ref{noescape} implies the supports of \(\pi_j\lfloor G_r\) in a common compact set. Passing to the limit in \eqref{limitplans} shows that \(\pi\lfloor G_r\) is supported in the same compact set. Therefore,
\[
 \widehat\pi\lfloor\widehat G_r
 =(\operatorname{id},P_L)_\#(\pi\lfloor G_r)
\]
has compact support.

Set
\[
 A_r:=\{x\in C:x\cdot D\phi(x)<r^2\},
 \qquad
 A'_r:=\{\xi\in C^*:\xi\cdot D\varphi(\xi)<r^2\}.
\]
Using
\[
 \widehat\pi
 =(\operatorname{id},D\phi)_\#
   \bigl(\rho\mathbf1_C\,\dd\mathcal H^m\bigr)
 =(D\varphi,\operatorname{id})_\#
   \bigl(\rho_L^*\mathbf1_{C^*}\,\dd\xi\bigr),
\]
we obtain
\[
\begin{aligned}
 (\operatorname{pr}_1)_\#
   (\widehat\pi\lfloor\widehat G_r)
 &=\rho\mathbf1_{A_r}\,\dd\mathcal H^m,\\
 (\operatorname{pr}_2)_\#
   (\widehat\pi\lfloor\widehat G_r)
 &=\rho_L^*\mathbf1_{A'_r}\,\dd\xi.
\end{aligned}
\]
Since \(D\phi\) and \(D\varphi\) are continuous, the functions
\[
 x\longmapsto x\cdot D\phi(x),
 \qquad
 \xi\longmapsto\xi\cdot D\varphi(\xi)
\]
are continuous. Hence \(A_r\subset C\) and \(A'_r\subset C^*\) are open. Since \(\rho>0\) in \(C\) and
\(\rho_L^*>0\) in \(C^*\),
\[
 \operatorname{spt}
 \bigl(\rho\mathbf1_{A_r}\,\dd\mathcal H^m\bigr)
 =\overline{A_r},
 \qquad
 \operatorname{spt}
 \bigl(\rho_L^*\mathbf1_{A'_r}\,\dd\xi\bigr)
 =\overline{A'_r},
\]
where the closures are taken in \(L\). Both supports are compact, being the coordinate projections of \(\operatorname{spt}(\widehat\pi\lfloor\widehat G_r)\). Thus \(A_r\) and \(A'_r\) are bounded.

Moreover, \eqref{reducedflat} and the transport identity give
 $\int_{A_r}\rho\,\dd\mathcal H^m
 =\int_{A'_r}\rho^*_L\,\dd \xi
 =r^n.$ All the hypotheses of Theorem~\ref{qrigidi} are now satisfied. Hence \(C,C^*\) are cones, \(\phi,\varphi\) are two-homogeneous, and \(\rho,\rho^*_L\) are \(q\)-homogeneous.

It remains to lift the homogeneity from the projected problem. Denote
\[
 \Omega^*_{\infty,\xi}:=\Omega_\infty^*\cap(\xi+L^\perp).
\]
Since \(\Omega_\infty^*\) is convex and \(0\in\overline{\Omega_\infty^*}\),
\[
 t\Omega^*_{\infty,\xi}\subset \Omega^*_{\infty,t\xi}\qquad(0<t<1).
\]
The \(q\)-homogeneity of \(\rho^*_L\) gives $|\Omega^*_{\infty,t\xi}|=t^q|\Omega^*_{\infty,\xi}|.$ The fibers are open convex sets of finite positive volume. Hence, we have
\[
 \Omega^*_{\infty,t\xi}=t\Omega^*_{\infty,\xi}\qquad(0<t<1).
\]
Since \(C^*\) is a cone, applying this identity at \(s\xi\) with \(t=s^{-1}\) gives the same equality for \(s>1\). Therefore \(\Omega^*_{\infty}\) is a cone. Together with \eqref{factorV}, the fact that \(\Omega_\infty^*\) is a cone shows that
\[
 \operatorname{cl}\bigl(
 (\tilde v_\infty|_{\Omega_\infty^*})
 +I_{\Omega_\infty^*}\bigr)
\]
is two-homogeneous. By \eqref{dualidentification}, this function equals \(\tilde v_\infty\), while \(\tilde u_\infty=\tilde v_\infty^*\). Hence both \(\tilde v_\infty\) and \(\tilde u_\infty\) are two-homogeneous.

Thus \(\tilde u_\infty\) is two-homogeneous in both alternatives. If \(p_1\) is the centering vector at height \(1\), then
\[
 \sqrt h\,S_1^c[\tilde u_\infty](0)
 =\{x:\tilde u_\infty(x)-\sqrt h\,p_1\cdot x<h\}.
\]
This set has center of mass zero. By uniqueness of the centering plane,
\[
 S^c_h[\tilde u_\infty](0)=\sqrt h\,S^c_1[\tilde u_\infty](0).
\]
Taking \(h=1/4\), and using Lemma~\ref{diacompac}(ii), proves the assertion.
\end{proof}

\section{Global \texorpdfstring{$W^{2,p}$}{W{2,p}} estimates}
\label{sec7shape}

We first record how density oscillations behave under the blow-up normalization. This also shows that one-sided affine normalization preserves the small relative oscillation on the target sub-level sets.

\begin{proposition}
\label{prop:continuous-density-interface}
Suppose $u, v, f, g$ satisfy \eqref{breniereq} and \eqref{eqdata}. Let \(r_j\rightarrow0\), \(x_j\in\overline\Omega\), and \(y_j=D u(x_j)\). For each \(j\), apply the normalization of
Section~\ref{secblowup} at \((x_j,y_j)\) with height \(h_j=r_j^2\), choosing the positive definite linear map \(A_j\) so that
\[
 A_j(S_{r_j^2}^c[u](x_j)-x_j)
\]
is in John position. Denote the resulting potentials, densities, and measures by \(\tilde u_j,\tilde v_j,\rho_j,\rho_j^*,\mu_j,\nu_j\), and choose the common mass factor so that
\[
 \mu_j(D_1(\tilde u_j,0))=1.
\]
Then, for every fixed \(R<\infty\),
\[
 \operatorname{osc}^{\rm rel}_{D_R(\tilde u_j,0)}\rho_j
 +\operatorname{osc}^{\rm rel}_{D_R(\tilde v_j,0)}\rho_j^*
 \longrightarrow0.
\]
Moreover, \(\mu_j\) and \(\nu_j\) have a common doubling constant depending only on \(n\) and \(\Lambda/\lambda\).
\end{proposition}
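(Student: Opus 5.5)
The plan is to pull the normalized sets $D_R(\tilde u_j,0)$ and $D_R(\tilde v_j,0)$ back to the original variables and show that their images shrink to the base points $x_j$ and $y_j$. Relative oscillation is invariant under multiplication by positive constants. It is also invariant under the affine changes of variables in Section~\ref{secblowup}, which only rescale $f$ and $g$ by the constant factors $a_j|\det A_j|^{-1}$ and $a_jh_j^n|\det A_j|$. So once the preimages shrink, uniform continuity of $f$ and $g$ together with $f,g\ge\lambda$ gives the claim.

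The key observation is that the sets $D_r$ transform exactly under the normalization. With $\tilde x=A_j(x-x_j)$ and $\tilde y=h_j^{-1}A_j^{-t}(y-y_j)$ we have
\[
\tilde x\cdot\tilde y=h_j^{-1}(x-x_j)\cdot(y-y_j).
\]
Hence
\[
x_j+A_j^{-1}D_R(\tilde u_j,0)=D_{Rr_j}(u,x_j),
\]
and likewise on the target side the preimage of $D_R(\tilde v_j,0)$ is $D_{Rr_j}(v,y_j)$. These two sets are related through $Du$. By \eqref{diameterup}, once $Rr_j\le r_0$,
\[
\operatorname{diam}D_{Rr_j}(u,x_j)+\operatorname{diam}D_{Rr_j}(v,y_j)\le C_0(Rr_j)^\sigma\to0 .
\]
This estimate is uniform in the base points, since $C_0$ and $r_0$ depend only on the fixed data.

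It follows that
\[
\operatorname{osc}^{\rm rel}_{D_R(\tilde u_j,0)}\rho_j
=\frac{\sup_{D_{Rr_j}(u,x_j)}f-\inf_{D_{Rr_j}(u,x_j)}f}{\inf_{D_{Rr_j}(u,x_j)} f}\le\frac{\omega_f(C_0(Rr_j)^\sigma)}{\lambda}\to0,
\]
and the same bound with $\omega_g$ controls the term for $\rho_j^*$. These sets are subsets of $\Omega$ and $\Omega^*$, where $f$ and $g$ are defined, so the suprema make sense. The normalization of the mass factor $a_j$ plays no role here, because relative oscillation is scale invariant.

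For the doubling statement, $\rho_j$ and $\rho_j^*$ are constant multiples of $f$ and $g$ composed with affine maps, restricted to the convex sets $\Omega_j$ and $\Omega_j^*$. Their density ratios are therefore bounded by $\Lambda/\lambda$, as recorded in \eqref{densityratio}. The affine doubling property of a measure with density pinched between two constants on a convex set is affine invariant. So \cite[Lemma~2.6 and Corollary~2.5]{JhaveriSavin2022} gives a doubling constant depending only on $n$ and $\Lambda/\lambda$.

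The only point needing care is the exact identification of the preimage of $D_R(\tilde u_j,0)$, namely that the normalized domain $\Omega_j$ corresponds to $\Omega$, and the use of the minimal extension on the dual side. The sets are defined with $x\in\Omega_j$ and $y\in\Omega_j^*$, so these points correspond to genuine transport pairs and the identity above holds without any use of the extension. There is therefore no real obstacle, apart from keeping track that \eqref{diameterup} is used at scale $Rr_j$ rather than $r_j$.
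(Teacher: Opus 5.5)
Your proposal is correct and follows essentially the same route as the paper. Both arguments use the exact pull-back $D_R(\tilde u_j,0)=A_j\bigl(D_{Rr_j}(u,x_j)-x_j\bigr)$ and its target analogue, then the uniform diameter bound \eqref{diameterup} to obtain the $\lambda^{-1}\omega_f$ and $\lambda^{-1}\omega_g$ estimates, and finally the invariance of doubling under affine maps and positive scalings for the last assertion.
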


\begin{proof}
By the change of variable in Section~\ref{secblowup}, the multiplicative normalization factors do not affect relative oscillations, and
\begin{align*}
 D_R(\tilde u_j,0)
 &=A_j\bigl(D_{r_jR}(u,x_j)-x_j\bigr),\\
 D_R(\tilde v_j,0)
 &=r_j^{-2}A_j^{-t}
  \bigl(D_{r_jR}(v,y_j)-y_j\bigr).
\end{align*}

Let \(\omega_f\) and \(\omega_g\) be the moduli of continuity of \(f\) and \(g\). By \eqref{diameterup}, for every fixed \(R\) and all sufficiently large \(j\),
\begin{align*}
 \operatorname{osc}^{\rm rel}_{D_R(\tilde u_j,0)}\rho_j
 &\le \lambda^{-1}\omega_f\bigl(C(r_jR)^\sigma\bigr),\\
 \operatorname{osc}^{\rm rel}_{D_R(\tilde v_j,0)}\rho_j^*
 &\le \lambda^{-1}\omega_g\bigl(C(r_jR)^\sigma\bigr).
\end{align*}
Both right-hand sides tend to zero as \(j\to\infty\).

Finally, the original measures have a common doubling constant depending only on \(n\) and \(\Lambda/\lambda\). Affine changes of variables and multiplication by positive constants preserve the doubling property, which proves the last assertion.
\end{proof}

We now return to the original continuous densities. All estimates below are uniform in the base point \(x_0\).

\begin{proposition}
\label{prop:zero-average-final}
There are \(C,c_0>0\) such that
\begin{equation}\label{Funiform}
 F_{x_0}(r)\le C\quad(0<r\le r_0),
 \qquad F_{x_0}(r_0)\ge c_0
\end{equation}
for every base point \(x_0\).  For \(0<a<b\le r_0\), put
\[
 \mathfrak a_{x_0}(a,b)
 =\log\frac{F_{x_0}(a)}{F_{x_0}(b)}
  +C\int_a^b\frac{\vartheta(s)}s\,\dd s.
\]
This quantity is nonnegative and additive on adjacent intervals. Let \(r_k=2^{-k}r_0\) and
\[
 a_k=\mathfrak a_{x_0}(r_{k+1},r_k).
\]
Then
\begin{equation}\label{zeroaverage}
 \sup_{x_0}\sum_{k=0}^{N-1}a_k=o(N).
\end{equation}
\end{proposition}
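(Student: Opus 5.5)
The argument has three parts: uniform bounds on \(F_{x_0}\), nonnegativity and additivity of \(\mathfrak a_{x_0}\) (which come from Theorem~\ref{nearlymonotonicity}), and a telescoping estimate for \(\sum a_k\) combined with the zero average of \(\vartheta\).

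\emph{Uniform bounds.} Since \(f\le\Lambda\) and \(D_r(u,x_0)\subset S_{r^2}[u](x_0)\subset S^c_{Cr^2}[u](x_0)\) by \eqref{include-D-S}--\eqref{include-S-Sc}, we have
\[
F_{x_0}(r)\le \Lambda r^{-n}\,|S^c_{Cr^2}[u](x_0)|.
\]
To bound the right side, apply \eqref{volumeproduct} at height \(h=Cr^2\), which gives \(|S_h^c|\,|Du(S_h^c)|\le Ch^n\). It then suffices to show \(|Du(S_h^c)|\ge c h^{n/2}\). For this we use \eqref{gradpolar}, which says that \(Du(S_h^c)\) contains an ellipsoid \(p_h+hA_h^{t}B_c\). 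Its volume is \(c h^n|\det A_h|\), and \(|\det A_h|\sim|S_h^c|^{-1}\). So this only reproduces the product bound and does not close the argument.

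A cleaner route for the upper bound is the mass identity \eqref{transportedmass} together with the symmetric inclusions. Both \(D_r(u,x_0)\) and \(D_r(v,y_0)\) carry the same mass \(M\). Moreover, \(Du\) maps the section \(S_h^c[u]\) approximately onto the dual section \(S_h^c[v]\) (centered-section duality in Caffarelli's theory). This gives
\[
M^2\lesssim |S_h^c[u]|\,|S_h^c[v]|\lesssim h^n
\]
by the volume product \eqref{volumeproduct}. Hence \(M\le Cr^{n}\), that is, \(F_{x_0}\le C\).

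For the lower bound at the fixed scale \(r_0\), Proposition~\ref{classicalproperties}(v) gives a ball \(B_c(x_0)\subset S^c_{r_0^2/C}\). Intersecting with \(\Omega\) and using the uniform interior cone condition of the convex domain \(\Omega\), \(|D_{r_0}(u,x_0)|\) is at least half of \(|S_{r_0^2}|\), which is \(\ge c\), by \eqref{include-D-S}. This gives \(F_{x_0}(r_0)\ge c_0\).

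\emph{Nonnegativity and additivity.} Nonnegativity of \(\mathfrak a\) is exactly \eqref{endpointinequality} after taking logarithms. Additivity is immediate, since the logarithm of a ratio telescopes and the integral is additive over adjacent intervals.

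\emph{Zero average.} By telescoping,
\[
\sum_{k=0}^{N-1}a_k=\log\frac{F_{x_0}(r_N)}{F_{x_0}(r_0)}+C\int_{r_N}^{r_0}\frac{\vartheta(s)}{s}\,\dd s\le \log\frac{C}{c_0}+C\sum_{k=0}^{N-1}\int_{r_{k+1}}^{r_k}\frac{\vartheta(s)}s\,\dd s.
\]
Each summand in the last sum is at most \(\vartheta(r_k)\log 2\). Since \(\vartheta(r_k)\to0\) as \(k\to\infty\) by uniform continuity of \(f,g\), the Cesàro mean of these terms tends to zero. Because \(\vartheta\) does not depend on \(x_0\), the bound is uniform in \(x_0\), and \eqref{zeroaverage} follows.

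\emph{Main obstacle.} The hard step is the uniform upper bound \(F_{x_0}\le C\) at all small scales, including boundary base points. It needs the duality between the \(u\)- and \(v\)-sections and the volume product, carefully, via \eqref{gradpolar}. Everything else is bookkeeping.
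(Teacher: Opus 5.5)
Your overall structure matches the paper's proof. You bound $F_{x_0}$ above by a volume product and bound it below at the single scale $r_0$. Nonnegativity and additivity come from Theorem~\ref{nearlymonotonicity}, and you finish with the telescoping identity and the Ces\`aro convergence of $\vartheta(r_k)\to0$. Your bound of each dyadic integral by $\vartheta(r_k)\log 2$, using that $\vartheta$ is nondecreasing, is fine. Your lower bound via Proposition~\ref{classicalproperties}(v) is also fine in place of the paper's $C^{1,\alpha_0}$ remark, with one small slip: the dilation in \eqref{include-D-S} gives $|D_{r_0}(u,x_0)|\ge 2^{-n}|S_{r_0^2}[u](x_0)|$, not one half.

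The gap is in the step you yourself single out. From the inclusions you correctly get $M^2\le\Lambda^2|S^c_{Cr^2}[u](x_0)|\,|S^c_{Cr^2}[v](y_0)|$. You then need $|S^c_h[u](x_0)|\,|S^c_h[v](y_0)|\lesssim h^n$, but \eqref{volumeproduct} does not say this: it controls $|S^c_h[u]|\,|Du(S^c_h[u])|$. Your bridge is the claim that $Du$ maps $S^c_h[u](x_0)$ ``approximately onto'' $S^c_h[v](y_0)$. This is neither proved nor available from the listed estimates. Proposition~\ref{classicalproperties} relates $S^c_h[v](y_0)$ to objects built from $u$ only through $S^c_h[v](y_0)\cap\Omega^*$, via \eqref{include-S-Sc}. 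The part of the centered section outside $\Omega^*$ is uncontrolled, and that is precisely the delicate issue at boundary points.

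The detour through $S^c_h[v]$ is unnecessary. Since $\Psi^*_{y_0}(Du(x))=\Psi_{x_0}(x)$ and $Du:\Omega\to\Omega^*$ is a bijection (Proposition~\ref{classicalproperties}(ii)), you have the exact identity $D_r(v,y_0)=Du(D_r(u,x_0))$; this identity is what underlies \eqref{transportedmass}. Hence $D_r(v,y_0)\subset Du(S^c_{Cr^2}[u](x_0))$ by \eqref{include-D-S}--\eqref{include-S-Sc}, and
\[
 M(r^2)^2\le\Lambda^2\,|D_r(u,x_0)|\,|D_r(v,y_0)|
 \le\Lambda^2\,|S^c_{Cr^2}[u](x_0)|\,\bigl|Du(S^c_{Cr^2}[u](x_0))\bigr|
 \le Cr^{2n}
\]
by \eqref{volumeproduct}. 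This is the paper's argument. It gives $F_{x_0}\le C$ uniformly in $x_0$, boundary points included, with no duality between centered sections of $u$ and $v$. With this replacement your proof is complete.
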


\begin{proof}
By \eqref{include-D-S}, \eqref{include-S-Sc} and \eqref{volumeproduct} we have
\[
 |D_r(u,x_0)|\,|D_r(v,y_0)|\le C r^{2n}.
\]
Since \(\lambda\leq f, g\leq \Lambda\), we have \(M(r^2)^2\le C r^{2n}\), proving the first part of \eqref{Funiform}. The second part follows easily from  the \(C^{1,\alpha_0}\) estimate of $u$ in Proposition~\ref{classicalproperties}.

By \eqref{endpointinequality} we have that \(\mathfrak a_{x_0}(a,b)\ge0\), and additivity is obvious. A straightforward computation shows
\[
 \sum_{k=0}^{N-1}a_k
 =\log\frac{F_{x_0}(r_N)}{F_{x_0}(r_0)}
  +C\int_{r_N}^{r_0}\frac{\vartheta(s)}s\,\dd s.
\]
The left side is nonnegative, while \eqref{Funiform} bounds the logarithm from above uniformly. Since \(\vartheta(s)\to0\) as \(s\rightarrow0\) and \(r_N=2^{-N}r_0\), splitting the integral over the dyadic intervals \([r_{\ell+1},r_\ell]\) gives
\[
 \frac1N\int_{r_N}^{r_0}\frac{\vartheta(s)}s\,\dd s
 \longrightarrow0.
\]
Equivalently,
\[
 \int_{r_N}^{r_0}\frac{\vartheta(s)}s\,\dd s=o(N).
\]
This proves \eqref{zeroaverage}.
\end{proof}

\begin{proposition}
\label{prop:subpower-final}
For every \(\varepsilon>0\) there are \(C_\varepsilon,h_\varepsilon>0\), independent of the center, such that
\begin{equation}\label{shapeestimate}
 B_{C_\varepsilon^{-1}h^{1/2+\varepsilon}}(x_0)
 \subset S_h^c[u](x_0)
 \subset B_{C_\varepsilon h^{1/2-\varepsilon}}(x_0),
 \qquad0<h<h_\varepsilon.
\end{equation}
The same estimate holds for \(v\).
\end{proposition}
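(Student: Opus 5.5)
We argue by compactness along dyadic heights, combining Theorem~\ref{sequential} with the counting estimate \eqref{zeroaverage}. Fix \(x_0\) and, for \(k\ge0\), let \(K_k=S^c_{4^{-k}r_0^2}[u](x_0)\), so consecutive sets differ by a factor \(4\) in height. By Lemma~\ref{fixratio}, \(K_{k+1}\) and \(K_k\) are always comparable up to dimensional factors. The goal is a sharper statement at ``good'' indices:
\[
 (\tfrac12-\delta)(K_k-x_0)\subset K_{k+1}-x_0\subset(\tfrac12+\delta)(K_k-x_0).
\]
Granting this for all but \(o(N)\) of the first \(N\) indices, uniformly in \(x_0\), we can iterate. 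Each good step multiplies both the inner and outer radii by a factor in \([\tfrac12-\delta,\tfrac12+\delta]\). Each bad step multiplies them by a factor in \([n^{-3}/4,\,n^3]\). After \(N\) steps the radii of \(K_N\) therefore lie between \(c\,(\tfrac12-\delta)^N C^{-o(N)}\) and \(C(\tfrac12+\delta)^N C^{o(N)}\). Since \(h=4^{-N}r_0^2\) gives \(2^{-N}\sim h^{1/2}\), choosing \(\delta\) small in terms of \(\varepsilon\) and \(N\) large yields \eqref{shapeestimate} for dyadic \(h\). Lemma~\ref{fixratio} then interpolates between dyadic heights, and John position at the initial height is controlled by Proposition~\ref{classicalproperties}(v).

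\emph{Good step lemma.} For every \(\delta>0\) there are \(\tau>0\) and \(\ell\in\mathbb N\), independent of \(x_0\), with the following property. Suppose \(\sum_{i=k-\ell}^{k+\ell}a_i<\tau\) and \(k\) is large enough that \(\vartheta(r_{k-\ell})<\tau\). Then \(K_{k+1}\) is \(\delta\)-close to \(\tfrac12K_k\) after John normalization of \(K_k\).

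We prove this by contradiction. Take a sequence of centers \(x_j\), indices \(k_j\to\infty\), and windows \(\ell_j\to\infty\) with \(\tau_j\to0\), such that the conclusion fails. Normalize as in Proposition~\ref{prop:continuous-density-interface} at height \(h_j=4^{-k_j}r_0^2\).

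\begin{itemize}
\item Condition \eqref{relativeosc} holds by Proposition~\ref{prop:continuous-density-interface}.
\item Condition \eqref{john} holds by construction.
\item Condition \eqref{massone} is a choice of constant.
\item For \eqref{Fflat}, rescaling shows that \(r^{-n}\mu_j(D_r(\tilde u_j,0))\) is the ratio \(F_{x_j}(r\,r_{k_j})/F_{x_j}(r_{k_j})\). On any compact \(r\)-range inside the window of \(\ell_j\) dyadic scales, the smallness of \(\mathfrak a\) combined with \eqref{endpointinequality} forces \(\log\) of this ratio to tend to \(0\). This uses monotonicity in both directions: \(\mathfrak a\ge0\) gives one inequality and the small sum gives the other. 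Hence the ratio tends to \(1\).
\end{itemize}

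With all hypotheses in place, Theorem~\ref{sequential} and \eqref{ScUjlimit} give \(S^c_{1/4}[\tilde u_j]\to\tfrac12 S_1^c[\tilde u_\infty]\). This contradicts the failure of \(\delta\)-closeness.

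\emph{Counting.} An index \(k\le N\) is bad only if \(k\le k_0(\tau)\), the threshold beyond which \(\vartheta(r_{k-\ell})<\tau\), or if the window sum \(\sum_{|i-k|\le\ell}a_i\ge\tau\). By \eqref{zeroaverage}, the number of indices of the second kind is at most \((2\ell+1)\tau^{-1}\sum_{i\le N+\ell}a_i=o(N)\), uniformly in \(x_0\). The statement for \(v\) follows symmetrically, since \(F\) is the same mass by \eqref{transportedmass}.

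The main obstacle is verifying \eqref{Fflat} uniformly and converting the dyadic error bound into the uniform-on-compacts statement. One must also handle indices near the start of the window, and make sure that the normalization heights \(r^2\) versus \(D_r\) scales match the dyadic indexing of \(K_k\). The comparability in \eqref{include-D-S} and \eqref{include-S-Sc} only enters through the hypotheses of Section~\ref{secblowup}, so it should suffice.
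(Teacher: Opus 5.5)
Your proposal is correct and follows the paper's proof essentially step for step: you prove by contradiction, via Theorem~\ref{sequential}, that $K_{k+1}$ is multiplicatively close to $\tfrac12K_k$ whenever the $\mathfrak a$-sum over a long window of dyadic scales is small, with \eqref{Fflat} coming from $\mathfrak a\ge0$ together with the small window sum. You then use \eqref{zeroaverage} to show that only $o(N)$ windows are bad, and iterate, applying Lemma~\ref{fixratio} at the bad scales and again to interpolate between dyadic heights. The differences are cosmetic: you impose $\vartheta(r_{k-\ell})<\tau$ where the paper imposes $k\ge k_*$, and you let the windows run past $N$ where the paper discards the last $Q$ indices.
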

\begin{proof}
Fix \(\varepsilon>0\), and choose \(\tau\in(0,1/2)\), to be specified below. Recall that \(r_k=2^{-k}r_0\), and put
\[
 K_k(x_0):=S_{r_k^2}^c[u](x_0)-x_0.
\]
For integers \(Q\ge1\) and \(k\ge Q\), set
\[
 \Delta_{k,Q}(x_0)
 :=\mathfrak a_{x_0}(r_{k+Q},r_{k-Q}).
\]
We claim that there exist \(Q\in\mathbb N\), \(\delta>0\), and \(k_*\in\mathbb N\), all independent of \(x_0\), such that
\(k\ge\max\{Q,k_*\}\) and \(\Delta_{k,Q}(x_0)\le\delta\) imply
\begin{equation}\label{goodKk}
 (1-\tau)\tfrac12K_k(x_0)
 \subset K_{k+1}(x_0)
 \subset(1+\tau)\tfrac12K_k(x_0).
\end{equation}

Suppose not. For \(Q=j\), \(\delta=j^{-1}\), and \(k_*=2j\) , there exist \(x_j\in\overline\Omega\) and \(k_j\ge2j\) such that
\begin{equation}\label{badblowup}
 \Delta_{k_j,j}(x_j)\le j^{-1},
 \qquad
 \eqref{goodKk}\text{ fails at }(x_j,k_j).
\end{equation}
Let \((U_j,V_j,\mu_j,\nu_j)\) be the normalized blow-up of the original problem at \((x_j,Du(x_j))\) and height \(r_{k_j}^2\), with affine transform \(A_j\) and common mass factor chosen so that \(\mu_j(D_1(U_j,0))=1\). By affine equivariance of centered sections, we have
\begin{equation}\label{sectionrescaling}
 S_s^c[U_j](0)
 =A_j\bigl(S_{s r_{k_j}^2}^c[u](x_j)-x_j\bigr)
 \qquad(s>0).
\end{equation}
In particular, the sections at heights \(1\) and \(1/4\) are the images of \(K_{k_j}(x_j)\) and \(K_{k_j+1}(x_j)\), respectively.

The normalized mass function satisfies
\[
 \widehat F_j(s):=s^{-n}\mu_j(D_s(U_j,0))
 =\frac{F_{x_j}(r_{k_j}s)}{F_{x_j}(r_{k_j})}.
\]
Fix \(R>1\). For all sufficiently large \(j\), we have \(R\le2^j\). By the nonnegativity and additivity of \(\mathfrak a_{x_j}\), \eqref{badblowup} implies, for \(R^{-1}\le s\le R\),
\[
 \mathfrak a_{x_j}\!\left(
  \min\{r_{k_j}s,r_{k_j}\},
  \max\{r_{k_j}s,r_{k_j}\}
 \right)\le j^{-1}.
\]
Consequently,
\[
 \sup_{R^{-1}\le s\le R}|\log\widehat F_j(s)|
 \le j^{-1}
  +C(\log R)\sup_{0<t\le Rr_{k_j}}\vartheta(t)
 \longrightarrow0,
\]
because \(k_j\ge2j\) and hence \(r_{k_j}\to0\). Thus
\(\widehat F_j\to1\) locally uniformly on \((0,\infty)\), which is
\eqref{Fflat}.

Let \(\rho_j,\rho_j^*\) be the densities of \(\mu_j,\nu_j\). Since
\(k_j\ge2j\),
\[
 r_{k_j}2^j=r_{k_j-j}\le r_j\longrightarrow0.
\]
By the estimates in the proof of Proposition~\ref{prop:continuous-density-interface}, we have that
\begin{align*}
 \operatorname{osc}^{\rm rel}_{D_{2^j}(U_j,0)}\rho_j
 +\operatorname{osc}^{\rm rel}_{D_{2^j}(V_j,0)}\rho_j^*
 &\le \lambda^{-1}\omega_f\bigl(C(r_{k_j}2^j)^\sigma\bigr)
     +\lambda^{-1}\omega_g\bigl(C(r_{k_j}2^j)^\sigma\bigr)
 \longrightarrow0.
\end{align*}
This implies \eqref{relativeosc} at every fixed radius. Moreover, these actual blow-ups satisfy \eqref{datanormalized}, \eqref{densityratio}, \eqref{john}, and \eqref{massone}, and Proposition~\ref{prop:continuous-density-interface} gives a common doubling constant. Hence, by Theorem~\ref{sequential}, after passing to a subsequence, we have
\[
 \overline{S_1^c[U_j](0)}\longrightarrow K,
 \qquad
 \overline{S_{1/4}^c[U_j](0)}\longrightarrow\tfrac12K
\]
in Hausdorff distance. Since \(B_1\subset S_1^c[U_j](0)\subset B_{n^{3/2}}\), it follows from the above convergence that, for all large \(j\),
\[
 (1-\tau)\tfrac12S_1^c[U_j](0)
 \subset S_{1/4}^c[U_j](0)
 \subset(1+\tau)\tfrac12S_1^c[U_j](0).
\]
By \eqref{sectionrescaling}, this is precisely \eqref{goodKk} at \((x_j,k_j)\), contradicting \eqref{badblowup}. The claim follows.

Fix the resulting \(Q,\delta,k_*\). For a fixed center \(x_0\), abbreviate \(K_k=K_k(x_0)\) and \(\Delta_{k,Q}=\Delta_{k,Q}(x_0)\). By additivity,
\[
 \Delta_{k,Q}=\sum_{\ell=k-Q}^{k+Q-1}a_\ell.
\]
For \(N>2Q\), define
\[
 \mathcal B_N
 :=\left\{k\in\{Q,\ldots,N-Q\}:
               \Delta_{k,Q}>\delta\right\}.
\]
Since each \(a_\ell\) occurs in at most \(2Q\) of the sums \(\Delta_{k,Q}\), Proposition~\ref{prop:zero-average-final} gives
\[
 \#\mathcal B_N
 \le \frac{2Q}{\delta}\sum_{\ell=0}^{N-1}a_\ell
 =o(N)
\]
uniformly in \(x_0\).

Now take
\[
 k\in
 \{\max\{Q,k_*\},\ldots,N-Q\}\setminus\mathcal B_N.
\]
Then \(\Delta_{k,Q}\le\delta\), and the claim gives \eqref{goodKk}.

At every other scale, Lemma~\ref{fixratio}, applied with \(\theta=1/4\), gives
\begin{equation}\label{Kkstep}
 \frac{1}{4n^3}K_k
 \subset K_{k+1}
 \subset n^3K_k.
\end{equation}
Let \(\mathcal E_N\subset\{0,\ldots,N-1\}\) be the set of indices at which \eqref{goodKk} has not been obtained. The preceding argument shows that
\[
 b_N:=\#\mathcal E_N
 \le \#\mathcal B_N+2Q+k_*=o(N)
\]
uniformly in \(x_0\).

Iterating \eqref{goodKk} and \eqref{Kkstep}, with
\(g_N=N-b_N\), gives
\[
 2^{-g_N}(1-\tau)^{g_N}(4n^3)^{-b_N}K_0
 \subset K_N
 \subset
 2^{-g_N}(1+\tau)^{g_N}(n^3)^{b_N}K_0.
\]
Consequently, for a dimensional constant \(C\),
\begin{equation}\label{shapcontrol}
 2^{-N}e^{-C\tau N-Cb_N}K_0
 \subset K_N
 \subset
 2^{-N}e^{C\tau N+Cb_N}K_0.
\end{equation}
By Proposition~\ref{classicalproperties}, \(K_0\) contains and is contained in balls of uniform radii. Choose \(\tau\) sufficiently small and then \(N\) sufficiently large so that
\[
 C\tau N+Cb_N\le 2\varepsilon(\log2)N.
\]
Since
\[
 r_N^2=4^{-N}r_0^2,
\]
\eqref{shapcontrol} yields
\[
 B_{C_\varepsilon^{-1}(r_N^2)^{1/2+\varepsilon}}(x_0)
 \subset S_{r_N^2}^c[u](x_0)
 \subset
 B_{C_\varepsilon(r_N^2)^{1/2-\varepsilon}}(x_0)
\]
for all sufficiently large \(N\), uniformly in \(x_0\).

Finally, if \(r_{N+1}^2\le h\le r_N^2\), then \(h=\theta r_N^2\) for some \(\theta\in[1/4,1]\). Lemma~\ref{fixratio} transfers the preceding estimate from \(r_N^2\) to \(h\), after changing \(C_\varepsilon\). This proves \eqref{shapeestimate} for \(u\). The proof for \(v\) is identical after interchanging the source and target.
\end{proof}

\begin{proof}[Proof of Theorem \ref{mainthm}]
The global $W^{2,p}$ estimate follows from  Proposition \ref{prop:subpower-final}, Caffarelli's interior $W^{2,p}$ estimate \cite{Caffarelli1990W2p}, and a covering argument of Savin \cite{SavinGlobalW2p}.
\end{proof}

\appendix
\section{Local regularity}
\label{appendix}

The limiting transports arising in Sections~\ref{sec5limit} and Sections~\ref{sec6rigidity} may have infinite total mass. Since only bounded source and target windows are used in the regularity argument, the theorem of Figalli--Jhaveri admits the following locally finite formulation.

A Radon measure \(\lambda\) is \emph{locally doubling on ellipsoids} if, for every ball \(B\subset\mathbb R^m\), there is a constant \(D_B\ge 1\) such that
\begin{equation*}
 \lambda(E)\le D_B\lambda(\tfrac12E)
\end{equation*}
whenever \(E\subset B\) is an ellipsoid with center in \(\operatorname{spt}\lambda\). Here \(\tfrac12E\) is the concentric half of \(E\). We shall use without further comment the fact that local doubling on ellipsoids implies local doubling on bounded convex sets; see \cite[Corollary~2.5]{JhaveriSavin2022}.

\begin{proposition}
\label{prop:localfinite}
Let \(X,Y\subset\mathbb R^m\) be open convex sets and let
\[
 \mu=a\mathbf1_X\,\dd x,
 \qquad
 \nu=b\mathbf1_Y\,\dd y,
 \qquad a,b>0\quad\text{a.e.}
\]
be nonzero locally finite Radon measures. Assume that both are locally doubling on ellipsoids. Let \(w:X\to\mathbb R\) be a finite convex potential function with \((Dw)_\#\mu=\nu\), and let
\begin{equation*}
 \underline w(x)=
 \sup_{\substack{z\in X\\p\in\partial w(z)}}
 \{w(z)+p\cdot(x-z)\}
\end{equation*}
be its lower-semicontinuous minimal extension. Suppose that
\begin{equation*}
 \partial\underline w(\mathbb R^m)\subset\overline Y.
 \end{equation*}
Then \(\underline w\) is strictly convex and \(C^1\) in \(X\), and $Dw:X\longrightarrow Y$ is a homeomorphism.
\end{proposition}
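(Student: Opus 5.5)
The plan is to localize and reduce to the classical Caffarelli theory for doubling measures, in the version of Figalli--Jhaveri / Jhaveri--Savin. Three things have to be proved: strict convexity of \(\underline w\) in \(X\), \(C^1\) regularity there, and the homeomorphism property of \(Dw:X\to Y\). The only new feature relative to the finite-measure case is that \(\mu,\nu\) may have infinite total mass. So every argument will be carried out inside bounded windows, where the local doubling constants \(D_B\) are uniform.

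\emph{Step 1: Alexandrov bounds.} First I will show that \(\underline w\) solves the Monge--Amp\`ere inequality in the Alexandrov sense, with
\[
 c_B\,\mu(E)\le |\partial\underline w(E)|_\nu
\]
and the reverse bound on bounded sets. Concretely, for a Borel set \(E\subset X\) I will use \(\nu(\partial\underline w(E))=\mu(E)\). This follows from \((Dw)_\#\mu=\nu\) together with the a.e.\ injectivity of \(\partial\underline w\), in the standard way. The hypothesis \(\partial\underline w(\mathbb R^m)\subset\overline Y\) is used so that no gradient mass escapes outside \(\overline Y\). Since \(\partial Y\) is \(\nu\)-null, \(\nu\) sees all of the image. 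This is exactly the setting of Caffarelli's boundary/interior framework with doubling measures on both sides.

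\emph{Step 2: strict convexity.} This is the main obstacle. I will argue by contradiction, following Caffarelli's classical argument as adapted by Figalli--Jhaveri. Suppose \(\underline w\) agrees with a supporting affine function \(\ell\) on a segment through a point \(x_1\in X\). Then the contact set \(\Sigma=\{\underline w=\ell\}\) is convex and contains more than one point. Since \(\partial\underline w(\Sigma)\) contains \(D\ell\), Step 1 forces \(\Sigma\) to have no extremal points in \(X\). So \(\Sigma\) must extend to \(\partial X\), or else be unbounded.

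I will first try to rule this out by working in a bounded section \(S=\{\underline w<\ell+\epsilon\}\cap B\) for a large ball \(B\). After John normalization of \(S\), I will compare the \(\nu\)-mass of \(\partial\underline w(S)\) with the \(\mu\)-mass of \(S\). The doubling of \(\nu\) near \(D\ell\) then yields the usual contradiction via the Alexandrov-type estimate near an extremal point of \(\Sigma\) in \(\overline{X}\). The key point to check is that the section can be chosen bounded. If it cannot, I will use unboundedness directly: an unbounded contact ray in \(X\) pushes, by monotonicity, all gradients of nearby points into a half-space face of \(\overline Y\). Local doubling of \(\nu\) at \(D\ell\in\overline Y\) makes this incompatible with positive \(\mu\)-mass near the ray.

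\emph{Step 3: \(C^1\) and homeomorphism.} Once \(\underline w\) is strictly convex in \(X\), the sections compactly contained near each point are bounded. By the doubling property, Caffarelli's engulfing argument then gives \(C^{1,\alpha}_{\rm loc}\); in particular \(\underline w\in C^1\) in \(X\). Applying the same reasoning to the Legendre dual on \(Y\), with the roles of \(\mu,\nu\) swapped, gives strict convexity of the dual on \(Y\). Hence \(Dw\) is injective on \(X\). I expect that the hypothesis \(\partial\underline w(\mathbb R^m)\subset\overline Y\), combined with the dual minimal extension, gives the symmetric setting needed for this step. Continuity of \(Dw\) together with the mass balance shows that \(Dw(X)\) is open, dense and of full \(\nu\)-measure in \(Y\). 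Strict convexity of the dual then shows \(Dw(X)=Y\) with continuous inverse, so \(Dw:X\to Y\) is a homeomorphism.
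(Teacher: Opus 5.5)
Your Step~2 has a genuine gap: the case analysis for \(\Sigma=\{\underline w=\ell\}\) is incomplete. After excluding extremal points in \(X\), you treat only two cases: an extremal point of \(\Sigma\) in \(\overline X\), or an unbounded \(\Sigma\). But \(\Sigma\) may a priori be a compact segment through \(x_1\in X\) that leaves \(\overline X\) at both ends, so that all its extremal points lie in \(\mathbb R^m\setminus\overline X\). Nothing in your plan excludes this.

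The paper, following Figalli--Jhaveri, Case~3, excludes it by a different mechanism. Put an exposed point at the origin with exposing direction \(e\), and normalize so that \(\Sigma\cap\{e\cdot x\ge-1\}\Subset\mathbb R^m\setminus\overline X\). Then for small \(\varepsilon\) the tilted sections \(S_\varepsilon=\{\underline w-\ell\le\varepsilon(1+e\cdot x)\}\) lie outside \(\overline X\), so \(\mu(S_\varepsilon)=0\). On the other hand, the polar estimate gives \(B_{c\varepsilon}(D\ell+\varepsilon e)\subset\partial\underline w(S_\varepsilon)\subset\overline Y\), a ball of positive \(\nu\)-mass. This requires the mass balance \(\mu(E)=\nu(\partial\underline w(E))\) for \emph{every} Borel \(E\subset\mathbb R^m\), including sets disjoint from \(X\). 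You stated it only for \(E\subset X\), and this case is exactly where the hypothesis \(\partial\underline w(\mathbb R^m)\subset\overline Y\) does its work.

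Your treatment of unboundedness is also not an argument. A contact ray in direction \(d\) does give \(\partial\underline w(\mathbb R^m)\subset D\ell+\{p\cdot d\le0\}\). But this only says that \(Y\) lies in a half-space with \(D\ell\in\partial Y\), which the hypotheses permit, and local doubling of \(\nu\) at \(D\ell\) does not turn it into a contradiction. The correct dichotomy is as follows.
\begin{itemize}
\item If \(\Sigma\) contains a line, the gradient image lies in a hyperplane, contradicting \(b>0\) a.e. on the open set \(Y\).
\item Otherwise \(\Sigma\) has an exposed point, bounded or not. Near an exposed point the tilted sections are automatically bounded for small \(\varepsilon\), so boundedness of sections is not the real issue.
\end{itemize}
Your untilted section \(\{\underline w<\ell+\epsilon\}\) contains all of \(\Sigma\) and does not localize near an extremal point. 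Cutting it with a ball \(B\) destroys the boundary condition on which Alexandrov-type estimates rest.

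Infinite mass actually creates two different difficulties, and your plan addresses neither.
\begin{itemize}
\item For an exposed point in \(\overline X\cap\operatorname{int}(\operatorname{dom}\underline w)\), the paper localizes on the target side. It replaces \(\underline w\) by \(\phi=\sup_{p\in\Upsilon}\{p\cdot x-\underline w^*(p)\}\) with \(\Upsilon\) compact; \(\phi\) agrees with \(\underline w\) near the point and transports finite measures.
\item When \(Y\) is unbounded, \(\underline w\) may be \(+\infty\) somewhere, and the exposed point may lie on \(\partial(\operatorname{dom}\underline w)\). There the gradient images of the sections contain long truncated cones, and the Alexandrov balance must be replaced by the Figalli--Jhaveri long-cone packing argument.
\end{itemize}

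A minor point in Step~3: injectivity of \(Dw\) comes from strict convexity of \(\underline w\) itself, not of the dual. What the dual must supply is \(C^1\) regularity of the minimal extension \(\underline v\) of \(\underline w^*|_Y\) on \(Y\), together with \(\partial\underline v(\mathbb R^m)\subset\overline X\). Invariance of domain then gives \(D\underline v(Y)\subset X\), and hence \(Dw(X)=Y\).
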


\begin{proof}
The argument is a localization of the proofs of \cite[Theorems~1.2 and~1.6]{FigalliJhaveri2023}. We point out only the modifications needed for locally finite measures and possibly unbounded supports.

Extend \(\mu\) and \(\nu\) by zero outside \(X\) and \(Y\), respectively. We have $ \operatorname{spt}\mu=\overline X $ and $ \operatorname{spt}\nu=\overline Y.$  By the mass-balance formula \cite[Lemma~2.1]{FigalliJhaveri2023}, we know
\begin{equation*}
 \mu(E)=\nu\bigl(\partial\underline w(E)\bigr)
 \qquad\text{for every Borel set }E\subset\mathbb R^m.
\end{equation*}

Next, we first prove strict convexity. Let $\ell$ be an affine function supporting $w$ at some point $x_0\in X$. After subtracting \(\ell\), we may assume that $ w\ge0$ and $ w(x_0)=0.$ Define $\Sigma:=\{\underline w=0\}.$ Note that $\Sigma$ is closed and convex. We prove that it is a singleton by contradiction.

If \(\Sigma\) contains a line, then \(\partial\underline w(\mathbb R^m)\) is contained in a proper affine hyperplane. This contradicts
\((D\underline w)_\#\mu=\nu\), since \(b>0\) almost everywhere in the open set \(Y\). Hence \(\Sigma\) has an exposed point.

Suppose first that there is an exposed point in $\overline X\cap\operatorname{int}(\operatorname{dom}\underline w).$ After a translation, we may assume that the exposed point is the origin. Similarly as in \cite[ Theorem~1.2, Case~2a]{FigalliJhaveri2023}, choose a bounded open set \(U\Subset\operatorname{int}(\operatorname{dom}\underline w)\) containing the origin and a ball compactly contained in \(X\). Define
\[
 \Upsilon
 :=
 \operatorname{conv}\overline{\{D\underline w(x):x\in U,\  \underline w\text{ is differentiable at }x\}}.
\]
Then \(\Upsilon\) is compact and convex, and \(\partial\underline w(U)\subset\Upsilon\). Define
\[
 \Omega_U=\partial\underline w^*(\Upsilon),\qquad
 \rho=\mu\lfloor\Omega_U,\qquad
 \gamma=\nu\lfloor\Upsilon,
\]
and introduce the truncated minimal extension
\begin{equation*}
 \phi
 =\sup_{p\in\Upsilon}
   \{p\cdot x-\underline w^*(p)\}.
\end{equation*}
Then \(\phi\) is globally finite and Lipschitz,
\[
 \phi=\underline w\quad\text{on }\Omega_U,\qquad
 \partial\phi(\mathbb R^m)\subset\Upsilon,
 \qquad
 (D\phi)_\#\rho=\gamma.
\]
Moreover, $0<\rho(\mathbb R^m)=\gamma(\mathbb R^m)<\infty.$ Since \(\phi=\underline w\) near the origin and \(0\le\phi\le\underline w\), the origin remains an exposed point of $\{\phi=0\}$. We may now apply the John-normalized long-cone argument in \cite[proof of Theorem~1.2, Case~2a]{FigalliJhaveri2023}. All source and target sets occurring there are contained, in the original variables, in fixed bounded sets. Thus only the corresponding local doubling constants of \(\mu\) and \(\nu\) are used, and the same packing contradiction follows.

Suppose next that the exposed point belongs to
$
 \overline X\cap \partial(\operatorname{dom}\underline w).
$
After the normalization used in \cite[proof of Theorem~1.2, Case~2b]{FigalliJhaveri2023}, let
\[
 \underline w_\varepsilon(x)
 =\underline w(x)-\varepsilon(x_1+1),\qquad
 S_\varepsilon=\{\underline w_\varepsilon\le0\},
\]
and let \(A_\varepsilon\) be the John map of \(S_\varepsilon\), with linear part \(L_\varepsilon\). If \(q\) is an exterior normal to
\(\operatorname{dom}\underline w\) at the exposed point, the same polar construction implies
\[
 \operatorname{conv}\bigl(
 B_r\cup(a_\varepsilon+\mathbb R_+\xi_\varepsilon)\bigr)
 \subset
 \partial\widetilde w_\varepsilon
          (\widetilde S_\varepsilon),
\]
where
\[
 a_\varepsilon=-L_\varepsilon^{-t}e_1,\qquad
 \xi_\varepsilon=
 \frac{L_\varepsilon^{-t}q}{|L_\varepsilon^{-t}q|}.
\]
Choose a finite truncated cone whose normalized length tends to infinity, and the images of all target sets used in the packing argument remain in fixed bounded sets in the original target variables. The disjoint long-cone packing argument of \cite[(2.3)--(2.7)]{FigalliJhaveri2023}, using the local source and target doubling constants on those fixed bounded sets, again yields a contradiction.

Finally, suppose that the exposed point does not belong to \(\overline X\). We may assume that the exposed point is the origin and that
\[
 S_0:=\Sigma\cap\{x_1\ge-1\}
 \Subset\mathbb R^m\setminus\overline X.
\]
For
\[
 \underline w_\varepsilon
 =\underline w-\varepsilon(x_1+1),\qquad
 S_\varepsilon=\{\underline w_\varepsilon\le0\},
\]
one has \(S_\varepsilon\Subset\mathbb R^m\setminus\overline X\) for all small \(\varepsilon>0\). The polar estimate from \cite[proof of Theorem~1.2, Case~3]{FigalliJhaveri2023} implies that
\[
 B_{c\varepsilon}
 \subset\partial\underline w_\varepsilon(S_\varepsilon)
 \subset\overline Y-\varepsilon e_1.
\]
Consequently, with
\(\nu_\varepsilon=(\operatorname{Id}-\varepsilon e_1)_\#\nu\),
\[
 0=\mu(S_\varepsilon)
  =\nu_\varepsilon\bigl(
       \partial\underline w_\varepsilon(S_\varepsilon)\bigr)
  \ge\nu_\varepsilon(B_{c\varepsilon})>0,
\]
a contradiction. Thus \(\underline w\) is strictly convex in \(X\).

The differentiability argument is the localized version of \cite[proof of Theorem~1.6, Part~1]{FigalliJhaveri2023}. Fix $x_0\in X$. Suppose that $\partial \underline w(x_0)$ is not a singleton. Since $x_0\in\operatorname{int}(\operatorname{dom}\underline w)$, $\partial \underline w(x_0)$ is compact. Choosing an exposed point of this compact convex set and applying the affine normalization used in \cite[proof of Theorem~1.6, Part~1]{FigalliJhaveri2023}, \(\underline w\) is differentiable in \(X\). 

It remains to identify the range. Let $v=\underline w^*|_Y$ and let \(\underline v\) be its minimal extension. Then $v$ is a finite convex function on $Y$ and satisfies
\[
 (Dv)_\#\nu=\mu,\qquad
 \partial\underline v(\mathbb R^m)\subset\overline X.
\]
Applying the preceding argument with source and target interchanged shows that \(\underline v\) is strictly convex and \(C^1\) in \(Y\). Thus $T=D\underline w|_X,\ S=D\underline v|_Y$ are continuous and injective. By invariance of domain, we have
\[
 T(X)\subset Y,\qquad S(Y)\subset X.
\]
Moreover, by Legendre duality, we have
\[
 S\circ T=\operatorname{id}_X,\qquad
 T\circ S=\operatorname{id}_Y.
\]
Therefore \(T(X)=Y\), and $D\underline w:X\longrightarrow Y$ is a homeomorphism.
\end{proof}

\end{document}